\documentclass[oneside]{amsart}

\numberwithin{equation}{section}
\newcommand{\be}{\begin{equation}}
\newcommand{\ee}{\end{equation}}

\newcommand{\<}{\langle}
\renewcommand{\>}{\rangle}

\newcommand{\R}{\mathbb R}
\newcommand{\HH}{\mathbb H}

\newcommand{\LL}{\mathcal L}
\newcommand{\U}{\mathcal U}
\newcommand{\B}{\mathcal B}

\newcommand{\ep}{\varepsilon}

\renewcommand{\phi}{\varphi}
\newcommand{\pd}{\partial}
\newcommand{\ov}{\overline}

\newcommand{\II}{\mathbf{I\!I}}

\DeclareMathOperator{\grad}{grad}
\DeclareMathOperator{\trace}{trace}
\DeclareMathOperator{\vol}{vol}
\def\div{\operatorname{div}}

\def\dist{\operatorname{dist}}

\newtheorem{theorem}{Theorem}[section]
\newtheorem{proposition}[theorem]{Proposition}
\newtheorem{corollary}[theorem]{Corollary}
\newtheorem{lemma}[theorem]{Lemma}

\newtheorem{conj}[theorem]{Conjecture}

\theoremstyle{definition}

\theoremstyle{definition}
\newtheorem{definition}[theorem]{Definition}
\newtheorem{df}[theorem]{Definition}
\newtheorem{notation}[theorem]{Notation}

\theoremstyle{remark}
\newtheorem*{remark}{Remark}

\begin{document}

\title{Area minimizers and boundary rigidity of almost hyperbolic metrics}

\author{Dmitri Burago}                                                          
\address{Dmitri Burago: Pennsylvania State University,                          
Department of Mathematics, University Park, PA 16802, USA}                      
\email{burago@math.psu.edu}                                                     
                                                                                
\author{Sergei Ivanov}
\address{Sergei Ivanov:
St.Petersburg Department of Steklov Mathematical Institute,
Fontanka 27, St.Petersburg 191023, Russia}
\email{svivanov@pdmi.ras.ru}

\thanks{The first author was partially supported                                
by NSF grants DMS-0604113 and DMS-0412166.                                      
The second author was partially supported by the                                
Dynasty foundation and RFBR grants 08-01-00079-a and 09-01-12130-ofi-m.}

\begin{abstract}
This paper is a continuation of our paper about boundary rigidity and filling
minimality of metrics close to flat ones.  We show that compact regions close 
to a hyperbolic one are boundary distance rigid and strict minimal fillings. We also
provide a more invariant view on the approach used in the above 
mentioned paper. 
\end{abstract}

\subjclass[2010]{53C24, 53C20, 53C65}

\maketitle

\section{Introduction and preliminaries}

\subsection{Minimal surfaces vs. area minimizes: a preliminary discussion.}

Before we proceed to the main results of this paper, we begin with a very
general consideration. We want to formulate
some open problems followed by a brief discussion. We begin with the
following problem, which sounds extremely natural, but we do not know
any reference for it.

Let $M^N$ be a complete Riemannian manifold and $S$ a compact $n$-dimensional
surface in $M$ with $\pd S\ne\emptyset$. Assume that $S$ is a convex set
 in the strongest possible sense, namely for every two points
$p,q \in S$, there is a unique shortest path between $p$ and $q$ in $M$
and this shortest path lies in $S$. Is it true that $S$ is an area minimizer,
namely that for any other surface $S_1$ such that 
$\partial S_1=\partial S$, the $n$-dimensional area $\vol_nS_1$ of $S_1$
is greater than that of $S$?

At first glance, this seems to be a naive question that should be easy.
However, apparently it is not and a solution would imply solutions of
some notoriously difficult problems. 

We want to emphasize the difference between minimal
surfaces (in the variational sense), local area minimizers
(that is, minimizing the area among all nearby surfaces)
and global area minimizers. In the above question,
$S$ is totally geodesic and hence is a minimal surface.
Moreover one can show that (any proper subregion of) $S$ is a local minimizer
(we do not give a proof here since we do not use this fact).
The problem begins when we are looking
for global minimality.

We know only two general methods of proving global minimality:
constructing a projection or a calibrating form. 

A projection is a map $P\colon M \to S$ which is the
identity on $S$ and does not increase $n$-dimensional areas.
This condition is equivalent to saying
that the $n$-dimensional Jacobian of $P$
is no greater than 1 everywhere on $M$.
(If in addition this Jacobian is strictly less than 1
outside $S$, the projection guarantees that $S$ is a
unique area minimizer among the surfaces with the
same boundary.)
It is very unlikely that such a projection
exists for all minimizers even if there are no topological obstructions.

A more general method is constructing a calibration form,
that is a closed $n$-form $\omega$ on $M$
such that the restriction of $\omega$ to $S$ is the $n$-dimensional
volume form of $S$ and the norm of $\omega$ is less than 1
outside $S$. Then by Stokes' Formula we immediately solve
the problem (for orientable surfaces). 

However there is a difficulty with both methods. Imagine that
there  exists a surface $S_2$ such that its boundary
$\pd S_2=10\cdot\pd S$ (that is, $\pd S_2$
covers $\pd S$ 10 times)
and $\vol_nS_2 < 10\vol_nS$. In this case we say that $S$ is not
stably minimizing. 
Such a surface can be an area minimizer but none of the above two methods
can prove this.
Such phenomena take place in
a situation rather close to the one that will be discussed in this paper.
Namely, regions of affine subspaces in normed spaces can be
global minimizers but not stably minimizing surfaces (with respect 
to the Holmes-Thompson surface area \cite{HT}), see \cite {BI02} and \cite{BI04}.

\subsection{Boundary rigidity and minimal fillings}
Now let us present our general set-up and explain why it is directly 
related to the above discussion. This paper is a continuation of
\cite{BI} and we borrow some parts of introduction and formulations from 
there. We hope that we also can give a better and more invariant insight
into what is done in \cite{BI}, especially due to Proposition  \ref{p:mean-curvature-zero}.

Let $M=(M^n,g)$ be a compact Riemannian manifold with boundary $\pd M$.
Its {\it boundary distance function}, denoted by $bd_M$, is the restriction of the
Riemannian distance $d_M$ to $\pd M\times\pd M$. The term
``boundary rigidity'' means that the metric is uniquely determined
by its boundary distance function. More precisely,

\begin{df} $M$ is {\it boundary rigid} if every
compact Riemannian manifold $M'$ with the same boundary and
the same boundary distance function is isometric to $M$ via a
boundary preserving isometry.
\end{df}

It is easy to construct metrics that are not boundary rigid. For
example, consider a metric on a disc with a ``big bump'' around a
point $p$, such that the distance from $p$ to the boundary is
greater than the diameter of the boundary. Since no minimal
geodesic between boundary points passes through $p$, a
perturbation of the metric near $p$ does not change the boundary
distance function.

Thus one has to impose restrictions on the metric in order to make
the boundary rigidity problem sensible. One natural restriction is
the following: a Riemannian manifold $M$ is called {\it
simple} if the boundary $\pd M$ is strictly convex, every two
points $x,y\in M$ are connected by a unique geodesic, and
geodesics have no conjugate points (cf.~\cite{Michel}). A more
general condition called SGM (``strong geodesic minimizing'') was
introduced in \cite{Croke91} in order to allow non-convex
boundaries. Note that if $M$ is simple, then it is a
topological disc.

The simplicity of $M$ can be seen from the
boundary distance function. Indeed, the convexity of $\pd M$ is equivalent to
a (local) inequality between boundary distances and intrinsic
distances of $\pd M$. And if one already knows that the boundary is convex,
then simplicity is equivalent to
$C^2$-smoothness of the boundary distance function away from the diagonal.
(This is an easy folklore fact which we do not use and leave
to the reader.)
Thus if two Riemannian
manifolds have the same boundary and the same boundary distance
functions, then either both are simple or both are not.

\begin{conj} [Michel \cite{Michel}]
All simple manifolds are boundary rigid.
\end{conj}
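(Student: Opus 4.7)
The plan is to attack Michel's conjecture through the filling-minimality route sketched in Section 1.1, in two logical stages: (1) prove that every simple Riemannian manifold $M$ is a strict minimal filling of its boundary data $(\pd M, bd_M)$, and (2) deduce that two simple manifolds with identical boundary distance functions are isometric because they are both strict minimizers of the same filling problem. Stage (2) is the easier one: if $M$ and $M'$ share $\pd M$ and $bd_M$, the strict minimality from stage (1) applied to $M$ (with $M'$ as competitor) forces $\vol_n M \le \vol_n M'$, and symmetrically $\vol_n M' \le \vol_n M$, whence equality and isometry follow from the strictness clause. So the real work lies in stage (1), namely in showing $\vol_n M \le \vol_n M''$ for every compact Riemannian $M''$ with $\pd M'' = \pd M$ and $bd_{M''} \ge bd_M$ pointwise (the distance-nondecreasing filling formulation).

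\textbf{Calibration/projection attempt.} For stage (1) I would try the two general methods mentioned in the excerpt. First, use the Kuratowski embedding $\iota_M \colon M \hookrightarrow L^\infty(\pd M)$ defined by $p \mapsto d_M(p, \cdot)$; since $bd_M = bd_{M'}$, any distance-preserving filling embeds into the same ambient Banach space with the same boundary image, so all competitors $M''$ sit inside a common normed universe. Then construct either a $1$-Lipschitz (in the $n$-volume sense) projection onto $\iota_M(M)$, or a closed calibrating $n$-form $\omega$ on $L^\infty(\pd M)$ whose restriction to $\iota_M(M)$ equals the Riemannian $n$-volume form and whose comass is strictly below $1$ off $\iota_M(M)$. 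Either object plus Stokes' theorem (or a direct Lipschitz area comparison) yields $\vol_n M \le \vol_n M''$ with equality characterized. The identification of the extremal submanifold as totally geodesic, which anchors the restriction of $\omega$, should be supplied by Proposition \ref{p:mean-curvature-zero}, and the hyperbolic calibration should be perturbed from an explicit model form built from the Busemann functions of $\HH^n$.

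\textbf{Main obstacle.} The essential difficulty, flagged explicitly in Section 1.1, is that neither a projection nor a closed calibration exists in general: a simple metric need not be \emph{stably} minimizing, as shown by the Holmes--Thompson examples in normed spaces, so Stokes-type arguments have a hard ceiling that lies strictly below the conjectural bound. Consequently one cannot hope to settle Michel's conjecture by producing a single ambient $\omega$ that calibrates \emph{all} simple metrics. A realistic path, which the paper implements in the almost-hyperbolic regime, is to fix a model (flat or hyperbolic), build an explicit calibration there, and then perturb, absorbing the perturbation into a carefully controlled error term using the geodesic structure. Extending this strategy to the full conjecture requires replacing the global calibration by some weaker, metric-dependent comparison device that still rules out the stability counterexamples; finding such a device is the step where I would expect to get stuck, and is where genuinely new input beyond the methods outlined in the excerpt appears to be needed.
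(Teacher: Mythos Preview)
The statement you are attempting to prove is not a theorem in the paper; it is Michel's \emph{conjecture}, stated as an open problem. The paper does not supply a proof of it, and therefore there is no ``paper's own proof'' to compare your proposal against. What the paper actually proves is the special case of Theorem~\ref{minimality-thm}: metrics sufficiently $C^r$-close to a hyperbolic one are strict minimal fillings, hence boundary rigid. The general conjecture remains open.

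Your proposal is not a proof either, and to your credit you say so yourself in the final paragraph: you identify precisely the step where the argument breaks down (the absence of a global calibration or area-nonincreasing projection for an arbitrary simple metric, tied to the failure of stable minimality) and correctly note that ``genuinely new input beyond the methods outlined in the excerpt appears to be needed.'' That diagnosis is accurate. So what you have written is a well-informed strategic outline that reproduces the paper's plan for the almost-hyperbolic case (Kuratowski-type embedding into an $L^\infty$ space, a projection/calibration argument anchored by Proposition~\ref{p:mean-curvature-zero}, explicit construction in the model case, perturbation), together with an honest acknowledgement that the method does not extend to the full conjecture.

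One minor remark on your stage~(2): your symmetry argument (apply strict minimality to $M$ and then to $M'$) is valid once you know that $M'$ is also simple, which follows since simplicity is determined by the boundary distance function. The paper instead uses the Santal\'o formula to get $\vol(M)=\vol(M')$ directly and then invokes strict minimality once; both routes work and yours is slightly more self-contained.
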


Pestov and Uhlmann \cite{PU} proved this conjecture in
dimension~2. In higher dimensions, few examples of boundary rigid
metrics are known. They are: regions in $\R^n$ \cite{Gromov},
in the open hemisphere \cite{Michel}, in symmetric spaces of
negative curvature (follows from the main result of \cite{BCG}),
and in product spaces of the form $N\times\R$ where $N$
is a simple $(n-1)$-dimensional Riemannian manifold \cite{CK98}.
We refer the reader to \cite{Croke04} and \cite{PU}
 for a survey of boundary rigidity,
other inverse problems, and their applications.

One of the main results of \cite{BI} asserts that
if $M$ is sufficiently
close to a region in the Euclidean space, then $M$ is
boundary rigid. In this paper we extend this to metrics
close to the hyperbolic one.
Namely we prove the following theorem:

\begin{theorem}\label{t:rigid}
If a Riemannian metric in a region is sufficiently close to a hyperbolic metric, then 
it is boundary rigid.  More precisely, let $D\subset\HH^n$ be a
compact region with a smooth boundary.
The there is a $C^r$-neighborhood (for a suitable $r$)
of the standard hyperbolic metric on $D$
such that for every metric $g$ from this neighborhood,
the space $M=(D,g)$ is boundary rigid.
\end{theorem}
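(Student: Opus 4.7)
The plan is to follow the strategy of \cite{BI}, replacing the Euclidean model by the hyperbolic one. Suppose $M' = (D, g')$ has the same boundary distance function as $M = (D, g)$. I would first reduce boundary rigidity to showing that $M$ is a strict minimal filling of its boundary: both $M$ and $M'$ embed isometrically into a common ambient space $\U$ (for instance $L^\infty(\pd M)$) via the Kuratowski-type map $p \mapsto d_M(p,\cdot)|_{\pd M}$, and because $bd_M = bd_{M'}$ these embeddings share the same boundary. If every surface in $\U$ with boundary $i(\pd M)$ has $n$-area at least $\vol_n(M)$, with equality forcing coincidence with $i(M)$, then applying the same assertion with the roles of $M$ and $M'$ swapped forces $i(M) = i(M')$ and yields the boundary-preserving isometry, as in \cite{BI}.

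The technical core is therefore to construct a calibrating $n$-form $\omega$ on (a neighborhood of) $i(M) \cup i(M')$ in $\U$ whose restriction to $i(M)$ is the volume form and whose comass is at most $1$ elsewhere, strictly less than $1$ outside $i(M)$. Following the analog of \cite{BI} in the hyperbolic setting, I would produce $\omega$ by averaging pull-backs of volume forms of totally geodesic $n$-planes modelled on $\HH^n$, with respect to the isometry-invariant measure on the space of such planes; equivalently, the form is built from Busemann functions on $\HH^n$. The invariant reformulation advertised as Proposition \ref{p:mean-curvature-zero} should capture the key point: for the unperturbed hyperbolic model, this average has comass exactly $1$ on tangent spaces to totally geodesic $n$-planes, which is the natural hyperbolic analog of the vanishing-mean-curvature condition used to calibrate flat hyperplanes.

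The main obstacle will be verifying that the comass bound persists under a $C^r$-perturbation of the hyperbolic metric. In \cite{BI} the translation invariance of $\R^n$ makes the averaged form parallel, reducing the comass bound to a pointwise inequality controlled by a Cauchy-Schwarz-type argument, with the perturbation entering only as a small second-order correction. In the hyperbolic setting the isometry group is nonabelian and Busemann functions are only asymptotically linear, so the averaged form is no longer parallel and the comass inequality has to be checked via a genuine perturbation estimate: the deviation from the model calibration must be shown to be $O(\|g - g_{\HH^n}\|_{C^r})$ in an appropriate norm on exterior powers of tangent spaces, and the zeroth-order gap between the model comass and $1$ must beat this deviation. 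Once this quantitative estimate is in place, the strict inequality off the hyperbolic totally geodesic model gives the strict minimal filling property of $M$, and Theorem \ref{t:rigid} follows from the reduction above.
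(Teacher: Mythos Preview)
Your outline diverges from the paper's approach --- the paper constructs an area-nonincreasing projection $P:\LL\to M$ rather than a calibrating form, and embeds via Busemann functions into $L^\infty$ of the ideal boundary $S^{n-1}$ rather than $L^\infty(\partial M)$ --- but the decisive gap is in your perturbation step. You claim the deviation from the model calibration is $O(\|g-g_0\|_{C^r})$ and that ``the zeroth-order gap between the model comass and $1$ must beat this deviation.'' This cannot work near the surface: on $\Phi(M)$ itself the comass (equivalently, the projection Jacobian) equals exactly $1$, and the gap vanishes quadratically as $\phi\to\Phi(M)$. A uniform $O(\eps)$ error would dominate the gap in a tube of width $\sim\sqrt{\eps}$ around the surface, and no calibration results. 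Your reduction also cannot ``swap the roles of $M$ and $M'$'', since $g'$ is not assumed close to hyperbolic; the paper instead uses Croke's SGM volume formula to get $\vol(M')=\vol(M)$ directly and then invokes the equality case of strict minimal filling for $M$ alone.

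The paper's resolution of the perturbation problem requires two ingredients absent from your sketch. First, the error introduced by the perturbation is itself quadratic in the $L^2$-distance $h(\phi)=\|\phi-\Phi(P(\phi))\|_{L^2}$ from the surface: one proves $|\det A_\phi-1|\le C\,\eps(g)^2\, h(\phi)^2$ (Proposition~\ref{p:A-estimate}). The key to this quadratic bound is that $\Phi(M)$ has zero mean curvature in $(\LL,G)$ for \emph{every} metric $g$, not only for $g=g_0$; this is the actual content of Proposition~\ref{p:mean-curvature-zero} (a first-variation identity, not a comass computation), which via Proposition~\ref{p:dJ} kills the first normal derivative of the Jacobian at the surface. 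Second, even with quadratic error the resulting bound $J\le 1+C\eps h^2$ is useless on its own; one must compose $P$ with an explicit family of contractions of $M$ toward a basepoint (the compression trick of Section~\ref{sec-compression}) to manufacture a gap $J\le 1-c\,h^2$, after which $c\,h^2$ dominates $C\eps\, h^2$ for $\eps$ small.
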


Here and below by ``region'' we mean a connected set
bounded by a smooth submanifold.

\begin{remark}
We do not track the number of derivatives required
for our arguments to work.
Therefore the formal meaning of the word ``smooth''
in this paper is $C^\infty$.
An interested reader can
verify that one can take $r=3$ for ``a suitable $r$''
in  Theorem~\ref{t:rigid}. This is worse than in \cite{BI}
where the metric has to be only $C^2$-close to the
Euclidean one.

The proof of Theorem \ref{t:rigid} can be also made to work
for metrics close to the Euclidean one
(in fact, the proof in this case is much easier),
an outline of the argument can be found in \cite{I10}.
While this approach proves a slightly weaker result
than in \cite{BI} (namely $C^2$ is replaced by $C^3$),
it provides a better insight into and allows some 
simplifications of the methods of \cite{BI}.
\end{remark}

We treat boundary rigidity  as the
equality case of the minimal filling problem discussed
in \cite {BI}, \cite{BI02}, and \cite{Ivanov}.

\begin{df}
\label{mf}
$M$ is a {\it minimal filling} if,
for every compact Riemannian manifold $M'$ with $\pd M'=\pd M$,
the inequality
$$
  d_{M'}(x,y) \ge d_M(x,y) \quad \text{for all $x,y\in\pd M$}
$$
implies
$$
  \vol(M') \ge \vol(M) .
$$
We say that $M$ is a \textit{strict minimal filling} if in addition the equality
$$
  \vol(M') = \vol(M)
$$
implies that $M$ and $M'$ are isometric via an
isometry that is identical on the boundary.
\end{df}

\begin{conj}
\label{filling}
Every simple manifold is a strict minimal filling.
\end{conj}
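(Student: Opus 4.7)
This is a well-known open conjecture going back at least to Gromov's work on filling invariants, so any honest proof proposal must acknowledge that the general case is out of reach by current methods. What I can offer is the strategy that seems most plausible in light of the preceding discussion, together with an identification of where the obstruction lies and how the paper at hand presumably circumvents it in the perturbative, near-hyperbolic regime.

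The natural first move is to try to exhibit a calibration: a closed $n$-form $\omega$ on an ambient space containing both $M$ and $M'$ whose comass is at most $1$ and which restricts to the volume form on $M$. Together with the identification $\pd M=\pd M'$ this would give, via Stokes, $\vol(M')\ge \int_{M'}\omega = \int_M\omega = \vol(M)$, with equality forcing $\omega$ to be the volume form on $M'$ as well and thereby producing the isometry needed for strictness. The preliminary discussion of the paper explains exactly why this cannot work in general: simple manifolds need not be stably minimizing, whereas the existence of such a form implies stable minimality. A dual projection-type argument fails for the same reason.

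The substitute strategy, implicit in \cite{BI} and extended here, is to replace a pointwise calibration with an integral-geometric identity on a canonical chain built from the geodesic flow. I would try to express $\vol(M)$ as an integral of free path lengths over the incoming unit boundary bundle, in the spirit of Santal\'o's formula adapted to the hyperbolic background, and then use the hypothesis $d_{M'}\ge d_M$ to associate to each $M$-geodesic a path in $M'$ whose length is at least that of the original. Averaging against a suitable kinematic measure would promote these pointwise length inequalities to the desired volume inequality. The referenced Proposition~\ref{p:mean-curvature-zero} is presumably the geometric engine that makes this comparison go through: it certifies that certain totally-geodesic-like objects in the hyperbolic background are genuine minimizers (mean curvature zero in a strong sense) and so serve as local calibrators that can be averaged globally.

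The decisive obstacle, and the reason the conjecture remains open, is precisely the failure of stable minimality: there is no pointwise calibration on a neighborhood of $M$ inside a putative $M'$, so the ``calibrating chain'' must be assembled globally and delicately from boundary data. In the near-hyperbolic regime, the hyperbolic geodesic flow supplies enough rigidity (conjugate-point control, Anosov-like behavior, explicit Jacobi fields) to close up the construction, yielding Theorem~\ref{t:rigid}; for an arbitrary simple $M$ with an arbitrary filling $M'$ there is simply no ambient model in which to realize the chain, and this is where I expect any attempted proof to stall. The strict part, assuming the volume inequality is in hand, should follow by the usual rigidity argument: equality forces the boundary-to-boundary correspondence along geodesics to be volume-preserving, and combined with the distance equality this forces an actual boundary-preserving isometry between $M$ and $M'$.
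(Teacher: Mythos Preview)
You are right that this is an open conjecture and that the paper does not prove it; there is no ``paper's own proof'' to compare against. What the paper does prove is the special case of metrics $C^r$-close to the hyperbolic metric (Theorem~\ref{minimality-thm}), and since your proposal explicitly speculates about how that proof goes, it is worth correcting the picture there.

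Your description of the method is off in a substantive way. The paper does \emph{not} proceed by a Santal\'o-type integral-geometric identity that averages free-path lengths against a kinematic measure and then uses $d_{M'}\ge d_M$ pointwise along geodesics. Instead the argument is the projection scheme sketched in the paper's introduction: one isometrically embeds $M$ into $\LL=L^\infty(S^{n-1})$ via Busemann functions (a ``special embedding'' $\Phi$), equips a ball in $\LL$ with a carefully chosen $L^2$-compatible Riemannian structure $G$, and then constructs an explicit $L^2$-smooth retraction $P_\sigma$ from that ball onto $M$ whose $n$-dimensional Jacobian with respect to $G$ is at most $1$ (strictly less off $\Phi(M)$). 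Any competing filling $M'$ is mapped $1$-Lipschitzly into $\LL$ by the extension property of $L^\infty$, and then pushed to $M$ by $P_\sigma$; the Jacobian bound gives $\vol(M')\ge\vol(M)$ and strictness. Santal\'o's formula appears only peripherally, to note that for simple manifolds the boundary distance function determines the volume.

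Your reading of Proposition~\ref{p:mean-curvature-zero} is also inaccurate: it is not about totally geodesic objects in the hyperbolic background, but about the surface $\Phi(M)\subset\LL$ having zero mean curvature with respect to every special metric $G$. Its role is to make the first-order variation of the Jacobian of $P$ vanish along $\Phi(M)$ (Proposition~\ref{p:dJ}), which is what allows the correction term $\det A_\phi$ to be controlled to second order in the distance from $\Phi(M)$ and then absorbed by the compression trick of Section~\ref{sec-compression}. This is the step that genuinely uses closeness to the hyperbolic model; it is not an Anosov or Jacobi-field argument.
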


If $M$ is simple, then its volume is
uniquely determined by its boundary distance function,
namely there is an integral formula
expressing $\vol(M)$ in terms of $bd_M$ and its first order derivatives
(the Santal\'o formula \cite{Santalo}, see also \cite{Gromov}). It
is not clear though whether the formula is monotone in $bd_M$.

However the mere existence of this formula implies
that every simple strict minimal filling is boundary rigid.
Indeed, let $M$ be simple and a strict minimal filling,
and suppose that $M'$ has the same boundary and the same boundary
distance function as~$M$. Since $M'$ has the same boundary distance
function as a simple manifold, it is also simple,
hence $\vol(M')=\vol(M)$ by the Santal\'o formula.
Now the equality case in Definition \ref{mf} implies
that $M'$ and $M$ are isometric.

Our approach to boundary rigidity is to prove
a suitable partial case of Conjecture \ref{filling}.
In \cite{BI} we were able to carry out this plan for
metrics close to a Euclidean one. Now we extend this method to metrics
close to a hyperbolic one.
Namely, we prove the following theorem:

\begin{theorem}\label{minimality-thm}
If a metric $g$ on a region $D \subset \HH^n$ is sufficiently close to the 
hyperbolic one, then $(D,g)$ is a strict minimal filling
(and hence is boundary rigid).
\end{theorem}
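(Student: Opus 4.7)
The plan is to adapt the strategy of \cite{BI}, which established the analogous statement for metrics close to Euclidean ones, to the hyperbolic setting. The overall architecture is calibration-based. Given any competitor $M'$ with $\pd M'=\pd D$ and $d_{M'}\ge d_g$ on the boundary, I would produce an ambient space $A$, an isometric embedding $\iota\colon(D,g)\hookrightarrow A$ extending a natural reference embedding $\iota_0\colon(D,g_{\HH^n})\hookrightarrow A$, a closed $n$-form $\omega$ on $A$ whose pullback under $\iota$ is the volume form of $(D,g)$ and whose comass is $\le 1$ everywhere and strictly less than $1$ off $\iota(D)$, and a distance-non-increasing (hence $n$-volume-non-increasing) map $F\colon M'\to A$ agreeing with $\iota|_{\pd D}$ on $\pd M'$. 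Stokes' theorem then yields
\[
\vol(D,g)=\int_{\iota(D)}\omega=\int_{M'}F^*\omega\le\vol(M'),
\]
while strictness of the comass bound outside $\iota(D)$ forces any equality case to be a boundary-preserving isometry, giving the strict minimal filling property.

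For the construction of $A$, $\iota_0$, and the background calibration, the natural choice is to sit $(D,g_{\HH^n})$ inside $\HH^n$, possibly extended by auxiliary Euclidean factors to absorb the perturbation. Any totally geodesic $n$-disc in $\HH^n$ admits an explicit calibrating $n$-form built from an integral-geometric ``Cauchy'' formula adapted to hyperbolic geometry, with comass strictly less than $1$ on non-tangent $n$-planes. Proposition \ref{p:mean-curvature-zero}, which the authors advertise as the invariant counterpart of the computations in \cite{BI}, then plays the role of a linearized-obstruction-vanishing statement: it should say that the first-order discrepancy between $(D,g)$ and its reference embedding vanishes, so that $\iota_0$ can be perturbed to an isometric embedding $\iota$ of the perturbed metric $g$ with the calibration property of a correspondingly perturbed $\omega$ preserved. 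Since $g$ is $C^r$-close to $g_{\HH^n}$, an implicit-function or fixed-point argument should produce both $\iota$ and the required $\omega$; the strict comass condition off $\iota(D)$ is an open condition and survives the perturbation.

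The main obstacle will be the construction of the $1$-Lipschitz extension $F\colon M'\to A$. On $\pd M'$, the prescribed $F|_{\pd M'}=\iota|_{\pd D}$ is already distance-non-increasing because $d_{M'}\ge d_g$ and $\iota$ isometrically embeds $(D,g)$; the real task is extending this boundary map to a $1$-Lipschitz map on all of $M'$. In the Euclidean case of \cite{BI}, the linear structure of $\R^n$ supplied such an extension directly; in the hyperbolic case that crutch is gone, and one must instead exploit convexity of distance functions in non-positive curvature, perhaps via a horospherical or McShane--Kirszbraun-type procedure, or bypass the explicit extension by a projection argument built from Proposition \ref{p:mean-curvature-zero}. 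The stability subtlety emphasized in the preliminary discussion---that regions in normed affine spaces can fail to be stably minimizing---warns that no universally applicable construction is available, and it is precisely the smallness of the $C^r$-perturbation that closes the perturbative step around the hyperbolic reference. This is where the ``sufficiently close'' hypothesis of the theorem really enters, and where any refinement of the allowed $r$ or neighborhood size would have to come from.
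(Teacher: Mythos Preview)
Your architecture---embed, build an ambient structure that certifies area minimality, extend the boundary map to the competitor---is the right shape, but two of your key choices diverge from what actually works, and the second is a genuine gap rather than an alternative.

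First, the ambient space in the paper is not $\HH^n$ with Euclidean factors; it is $\LL=L^\infty(S^{n-1})$. The distance-preserving embedding $\Phi\colon M\to\LL$ is given by Busemann functions, $\Phi(x)(s)=\Phi_s(x)$, and requires no perturbative construction: it exists and is an isometry for \emph{every} $g$, not just those close to $g_0$. Your plan to isometrically embed an arbitrary perturbed $(D,g)$ into a fixed finite-dimensional hyperbolic-plus-Euclidean target has no obvious realization; this is exactly the difficulty that the passage to $L^\infty$ sidesteps.

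Second, and more importantly, the Lipschitz extension you flag as ``the main obstacle'' is in fact trivial once you work in $L^\infty$: any $1$-Lipschitz map from a subset of any metric space into $L^\infty$ extends to a $1$-Lipschitz map on the whole space (coordinate-wise McShane). So the competitor $M'$ always maps $1$-Lipschitz into $\LL$ by an extension $\Phi'$ of $\Phi|_{\pd D}$. The hard part is \emph{not} this extension; it is the construction of an area-non-increasing retraction $P_\sigma\colon\B\to M$ (a projection, not a calibration form) with $P_\sigma\circ\Phi=\mathrm{id}_M$. One then sets $F=P_\sigma\circ\Phi'\colon M'\to M$, notes $F|_{\pd M'}=\mathrm{id}_{\pd D}$, and concludes $\vol(M')\ge\vol(D,g)$ from degree considerations plus the Jacobian bound, with strictness coming from $J_GP_\sigma<1$ off $\Phi(M)$.

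Finally, your reading of Proposition~\ref{p:mean-curvature-zero} is off. It is not a ``linearized-obstruction-vanishing'' statement used to perturb an embedding; it says that, for any special embedding and special metric, the image $\Phi(M)$ has zero mean curvature in $(\LL,G)$. Its role is to force the first normal derivative of $J_GP$ to vanish along $\Phi(M)$ (Proposition~\ref{p:dJ}), which in turn makes the correction factor $\det A_\phi$ equal $1$ to first order. That is what allows the second-order estimate $|\det A_\phi-1|\le C\ep(g)^2 h(\phi)^2$, and the ``sufficiently close'' hypothesis enters only here, in beating this quadratic error with the compression trick of Section~\ref{sec-compression}.
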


Let us note that one of the basic difficulties in the boundary rigidity problem 
is that boundary rigidity fails for Finsler manifolds under even very strong assumptions 
(see e.g.\ an example in the introduction of \cite{BI}) hence one has to substantially use the 
fact that we are dealing with Riemannian manifolds. Nevertheless, we have no idea whether
filling volume minimality holds in the Finsler world (beyond dimension two). Our proof 
is motivated by Finsler geometry and its structure is basically Finsler; however, when
we introduce an auxiliary structure of an $L^2$-manifold on $L^\infty$ and start 
computation of Jacobians relating volumes, the proof becomes essentially Riemannian. 

\subsection{Plan of the proof}
We deduce filling minimality of a metric from area minimality of its
image in a suitable space. More precisely, given $M=(D,g)$ as in
Theorem \ref{minimality-thm}, we do the following:

(i) Construct a distance-preserving map $\Phi:M\to\LL=L^\infty(S)$ where
$S$ is a suitable measure space. Following the tradition of Geometric Measure Theory,
we refer to Lipschitz maps from manifolds of any dimension to a normed space as 
{\em surfaces}.  

(ii) Define an $n$-dimensional surface area functional
(referred to as the $n$-volume) for Lipschitz surfaces
in $\LL$ so that the following holds. First, every 1-Lipschitz map
from a Riemannian manifold to $\LL$ does not increase $n$-volumes.
Second, the above map $\Phi$  preserves $n$-volumes.

(iii) Prove that $\Phi$ (regarded as a surface in $\LL$)
is an area minimizer, that is it has
the least $n$-volume among
all Lipschitz surfaces with the same boundary in $\LL$. 
Moreover, $\Phi$ is a unique area-minimizer: every
Lipschitz surface in $\LL$ having the same boundary
and the same $n$-volume as $\Phi$ is contained in the
image of $\Phi$ (up to a set of zero area).

If there exist $\LL$, $\Phi$ and a surface area functional satisfying (i)--(iii),
then $M$ is a strict minimal filling. Indeed, consider a Riemannian
$n$-manifold $M'$ such that $\pd M'=\pd M$ and
$d_{M'}\ge d_M$ on $\pd M\times\pd M$. The latter
implies that the map $\Phi|_{\partial M'}\to\LL$ is 1-Lipschitz
(with respect to the distance $d_{M'}$ on $\partial M'$).
It is easy to see (cf.\ e.g.\ \cite[Proposition 1.6]{Iv09}) that $\LL=L^\infty(S)$ enjoys
the following Lipschitz extension property: for any metric space $X$,
any subset $Y\subset X$ and any 1-Lipschitz map $f:Y\to\LL$
there exists a 1-Lipschitz map $\tilde f:X\to\LL$ extending $f$.
Substituting $X=M'$, $Y=\pd M'$ and $f=\Phi|_{\pd M'}$
yields a 1-Lipschitz map $\Phi':M'\to\LL$ such that
$\Phi'|_{\pd M'}=\Phi|_{\pd M}$. Then by (ii) and (iii) we have
$$
 \vol(M') \ge \vol(\Phi') \ge \vol(\Phi)=\vol(M) ,
$$
hence $M$ is a minimal filling. To prove strict filling minimality,
observe that the equality in the above inequality implies that
$\Phi'$ preserves $n$-volumes and the image of $\Phi'$
is contained in the image of $\Phi$. Thus there is a
1-Lipschitz volume-preserving map $\Phi^{-1}\circ\Phi'$
from $M'$ to $M$. Such a map must be an isometry,
thus $M$ is a strict minimal filling.

Actually our proof contains some extra technical details
(in particular, we do not assume convexity of the boundary
and work  in some large ball in $\LL$ rather
than in the entire space). The proof with all these details
is contained in Section \ref{sec-proof-thm}.

For the purpose of proving boundary rigidity only,
this strategy {\em should} work as follows. Imagine that for
every simple manifold $M$ we construct (in a canonical way)
an isometric embedding $\Phi_{M}:M \to\LL=L^\infty(S)$
 with the following properties:

1. The restriction of $\Phi_{M}$ to $\pd M$ depends only on the
boundary distance  function of~$M$.

2. A notion of $n$-volume in $\LL$ is defined so that
these isometric embeddings $\Phi_M$ preserve $n$-volumes.

3. $\Phi_{M}$ is a strict area minimizer with respect to 
this $n$-volume in $\LL$ (at least for the particular manifold $M$
for which we are trying to prove rigidity).

Given such a construction, the boundary rigidity of $M$
follows immediately from the fact that the boundary distance
function uniquely determines the volume. Our proof
is more complicated than this, since our definition of the
$n$-volume in $\LL$ depends on $M$
(still we have that $\Phi_M$ preserves $n$-volumes and any competitor
map $\Phi_{M'}$ does not increase $n$-volumes).

There are many natural constructions satisfying the first
of the above requirements.
For instance, one could 
embed $M$ into $L^\infty(\pd M)$ by distance functions:
for $x\in M$, let $\Phi(x)=d_M(x,\cdot)|_{\pd M}$.
We use a slightly different embedding of the
same type where $\LL$ will be the $L^\infty$ of the ideal boundary
of $\HH^n$ rather than that of the boundary of $M$. 

The surface area functional in $\LL$ is defined by a suitable
Riemannian structure
(that is, a family of $L^2$-compatible scalar products) on $\LL$.
The strict minimality of the surface $\Phi_M$ is proved
by constructing a projection from $\LL$ to this surface
that strictly decreases $n$-volumes outside the surface.
Note that a proof following this strategy never mentions the competitor
manifold $M'$ and works exclusively with our simple manifold $M$.
 
A very important observation  (Proposition \ref{p:mean-curvature-zero}) is that for
a very natural variety of choices of $n$-volume forms on $\LL=L^\infty(S)$
(induced by certain choices of Riemannian structures on $\LL$),
$\Phi_M$ is a minimal surface (in the variational sense).
This
observation explains why there is a reasonable hope that
this strategy could work. The difficult part is to
prove the global area minimality of $\Phi_M$,
and this is where we use the assumption that
the metric of $M$ is close to a hyperbolic one.

In fact, we first prove the minimality for the hyperbolic case.
We do this by giving an explicit formula for a distance-preserving
embedding $\Phi\colon\HH^n\to\LL$ and an $n$-area contracting
retraction $P\colon\LL\to\Phi(\HH^n)$ 
(we call such a map ``projection'', see the beginning of the introduction).
This formula is a guesswork.
Moreover we show that the global area minimality of regions in $\Phi(\HH^n)$
is quadratically non-degenerate in a sense
(by using a compression trick, see Section~\ref{sec-compression}).
Then the case of an almost hyperbolic metric follows from
an infinite-dimensional generalization of the following standard fact:
In finite dimension, non-degenerate global area minimality persists under small perturbations
within the class of minimal surfaces. Generalizing this fact becomes somewhat
technically cumbersome since we work with both $L^2$ and $L^\infty$ and
there are different topologies involved. 

\medskip
\noindent{\it Note on notations}.
For technical reasons, we work not with a compact disc
with boundary but rather with a complete Riemannian manifold
containing this disc.
In order not to make the reader confused we mention here
that in the proofs below $M$ denotes 
that complete manifold rather than the original disc.

\begin{remark}
To define $n$-volumes in $\LL$ we
use a somewhat artificial construction
when a class of volumes is defined after we have an embedding of $M$.
There are several nice notions of $n$-volumes for surfaces in $L^\infty$, which,
unlike in our construction, do not depend on any base embedding or such.
Unfortunately, we were not able to use any of those nice
volumes directly. 
\end{remark}

\noindent
\textit{Acknowledgement}.
We are grateful to three anonymous referees for thoroughly reading our paper 
and for numerous very useful remarks, corrections, and suggestions. We are also 
grateful to Chris Croke, Bruce Klieiner,
Bill Minicozzi, and Gunther Uhlmann for interesting discussions.

\section{Proof of the theorems}
\label{sec-proof-thm}

The purpose of this section is to deduce the statements of Theorems
from Proposition \ref{main prop}. This proposition asserts  that there
exist two maps with certain easy-to formulate properties. The rest of 
the paper is a rather technical construction of the maps along with verification
of the properties.

Let $g_0$ denote the standard metric on $\HH^n$.
Let $D\subset\HH^n$ be a region with a smooth boundary
and $g$ a Riemannian metric on $D$ (assumed to be
close to ${g_0}_{|_D}$ in a suitable $C^r$ topology).

Fix a point $o\in\HH^n$, this point will be referred
to as the \textit{origin}. By $B_o(R)$ we denote
the ball of radius $R$ in $\HH^n$ centered at~$o$.
Fix an $R>0$ such that $D\subset B(R/5)$.
The metric $g$ can be extended from $D$ to $\HH^n$
so that the extension is smooth and coincide with $g_0$
outside $B(R/2)$. Moreover the extension can be
constructed in such a way that it converges to $g_0$
as  $g$ converges to $g_0|_D$.

We denote the extension by the same letter $g$
and denote $M=(\HH^n,g)$. 
Our goal is to prove that for any region
$D\subset B(R/2)$ the space
$(D,g)\subset M$ is a minimal filling and boundary rigid.

Let $S=S^{n-1}$ and  $\LL=L^\infty(S)$. 
For $r>0$, let $\B(r)$ denote
the ball of radius $r$ in $\LL$ centered at the origin.
Let $\B=\B(R)$ where $R$ is the radius fixed above.

The technical results
established in the rest of the paper can
be summarized as the following proposition.

\begin{proposition}
\label{main prop}
If $g$ is sufficiently close to $g_0$,
then there exists a distance-preserving map
$\Phi:M\to\LL$ such that $\Phi(o)=0$ and a Lipschitz map
$$
P_\sigma\colon\B\cup\Phi(M)\to M
$$
such that the following conditions hold:

1. $P_\sigma\circ\Phi=id_M$.

2. For every Riemannian $n$-manifold $N$ and
every 1-Lipschitz map $f:N\to\B$, the composition
$P_\sigma\circ f:N\to M$ does not increase $n$-dimensional
(Riemannian) volumes.

3. If, for $N$ and $f$ as above, the composition $P_\sigma\circ f:N\to M$
preserves volumes of all measurable sets,
then $f(N)\subset\Phi(M)$.
 \end{proposition}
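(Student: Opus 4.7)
The plan is to follow the roadmap sketched in the introduction: construct $\Phi$ using Busemann-type functions parameterized by $S = S^{n-1}$ (viewed as the ideal boundary of $\HH^n$) and build $P_\sigma$ as a retraction of $\B$ onto $\Phi(M)$ whose $n$-dimensional Jacobian is bounded by $1$ with respect to a carefully chosen $L^2$-Riemannian structure on $\LL$ induced by a measure $\sigma$ on $S$. For the embedding I would take $\Phi(x)(\xi) = b_\xi^g(x) - b_\xi^g(o)$, where $b_\xi^g$ is the $g$-Busemann function in the direction $\xi \in S^{n-1}$; this is well-defined because $g$ agrees with $g_0$ outside $B_o(R/2)$. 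In the purely hyperbolic case, the identity $\sup_\xi \bigl(b_\xi(x) - b_\xi(y)\bigr) = d_{\HH^n}(x,y)$ shows $\Phi$ is an isometric embedding; the perturbed case requires a verification that the supremum equals $d_g(x,y)$, which should follow from the fact that minimizing $g$-geodesics exit the perturbed region transversely into the hyperbolic exterior, so that their asymptotic classes realize the distance.

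The construction of $P_\sigma$ naturally splits in two stages. First, in the purely hyperbolic setting, I would write down an explicit nearest-point-type retraction $P_0:\LL \to \Phi(\HH^n)$ and compute its Jacobian directly. The measure $\sigma$ must be chosen so that $\Phi$ is simultaneously distance-preserving and volume-preserving, and so that the Jacobian of $P_0$ at each $y \in \LL$ can be exhibited by a closed-form calculation as $\le 1$, with strict inequality off $\Phi(\HH^n)$. The compression trick then upgrades this strict inequality to a quantitative quadratic lower bound on the Jacobian deficit. Second, for the almost-hyperbolic metric, $P_\sigma$ should be obtained by perturbing $P_0$ so that it retracts onto $\Phi(M)$ instead of $\Phi(\HH^n)$; since $\Phi(M)$ is itself a minimal surface in $\LL$ by the mean-curvature-zero proposition and lies $C^r$-close to $\Phi(\HH^n)$, the quadratic non-degeneracy should ensure that the perturbed $P_\sigma$ still has Jacobian $\le 1$ throughout $\B$.

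Once $\Phi$ and $P_\sigma$ are constructed, properties (1)--(3) fall out essentially formally. Property (1) is built into the construction of $P_\sigma$ as a retraction onto $\Phi(M)$. For (2), given a $1$-Lipschitz $f:N\to\B$, the area formula combined with the pointwise bounds $|Jf|\le 1$ (from the Lipschitz condition on $f$) and $|JP_\sigma|\le 1$ (from the Jacobian analysis above) gives $\vol_n(P_\sigma\circ f)\le\vol_n(N)$. For (3), the equality case in the area formula forces $|JP_\sigma|\circ f = 1$ almost everywhere on $N$, and the strict Jacobian inequality off $\Phi(M)$ forces $f(y)\in\Phi(M)$ almost everywhere, hence $f(N)\subset\Phi(M)$ by continuity and closedness of $\Phi(M)$. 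The main obstacle is clearly the analysis of $P_\sigma$: one needs a retraction defined and Jacobian-bounded on the entire large ball $\B$ of radius $R$ in $L^\infty$, not merely on a tubular neighborhood of $\Phi(M)$, and the quadratic non-degeneracy has to be quantitative enough to survive the infinite-dimensional perturbation passage from $\HH^n$ to $M$, where the $L^\infty$ topology defining $\B$ and the $L^2$ structure defining volumes interact in subtle ways.
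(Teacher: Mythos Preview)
Your high-level strategy matches the paper's: Busemann embedding, an $L^2$-type Riemannian structure on $\LL$, a retraction with controlled $n$-Jacobian, a compression step, and a perturbation argument. However, two concrete points in your plan are mis-specified and would derail the execution.

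First, the subscript $\sigma$ in $P_\sigma$ is \emph{not} a measure on $S$; it is a small numerical parameter governing the compression step. The Riemannian structure $G$ on $\LL$ that the paper uses is \emph{not} induced by a single fixed measure: for each $\phi\in\B$ one takes the probability measure $\bar\rho_\phi\,d\mu_x$ on $S$, where $x=P(\phi)$, and sets $G_\phi(X,Y)=n\int_S XY\,\bar\rho_\phi\,d\mu_x$. This $\phi$-dependence (through the preliminary projection $P$) is exactly what makes $G$ a ``special metric'' and $\Phi$ an isometric immersion into $(\LL,G)$; a single fixed measure cannot achieve both the isometry of $\Phi$ and the Jacobian bound for the retraction simultaneously.

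Second, you have the roles of the quadratic deficit and the compression trick reversed. In the paper the derivative of the preliminary projection $P$ factors as $d_\phi P = A_\phi^{-1}\circ E_\phi$. The quadratic Jacobian deficit
\[
J_{G_\phi,Y}(E_\phi)\le 1 - c_0\|\delta - d\Phi(E_\phi(\delta))\|_{L^2}^2
\]
is obtained \emph{directly} from the AM--GM and Cauchy--Schwarz estimate on $E_\phi$ (Proposition~\ref{p:jacp}), for the perturbed metric, with no compression involved. The correction factor $\det A_\phi$, however, can exceed $1$ when $g\ne g_0$, by an amount $O\bigl(\|g-g_0\|_{C^r}^2\,h(\phi)^2\bigr)$ where $h(\phi)=\|\phi-\Phi(P(\phi))\|_{L^2}$. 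The compression trick then \emph{uses} the already-established quadratic deficit to absorb this error: one first lifts $P$ to $F_\sigma:\phi\mapsto(P(\phi),\sigma h(\phi))\in M\times\R_+$ (the tangential deficit pays for the extra height coordinate), and then post-composes with a radial shrinking $Q_\sigma:M\times\R_+\to M$ whose Jacobian is at most $(1+\sigma^3 h^2)^{-1}$. The net result is $J_G P_\sigma\le 1-c\,h(\phi)^2$, which gives properties (2) and (3) as you describe. So the compression is not what \emph{produces} the quadratic deficit; it is what \emph{spends} it to beat the perturbation error.
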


Now we deduce the theorems from this proposition.

\begin{proof}[Proof of Theorem \ref{minimality-thm}]
Let $g$ be sufficiently close to $g_0$ so that
the maps $\Phi$ and $P_\sigma$ from
Proposition \ref{main prop} exist.
Let $g'$ be a metric on $D$ such that
$$
 d_{(D,g')}(x,y) \ge d_{(D,g)}(x,y)  
$$
for all $x,y\in\pd D$.
Denote $M'=(D,g')$.
Since $(D,g)\subset M$, we have $d_{(D,g)}(x,y)\ge d_M(x,y)$
for all $x,y\in D$, hence
$$
 d_{M'}(x,y) \ge d_M(x,y)  
$$
for all $x,y\in\pd D$.
Since $\Phi$ is distance-preserving with respect to $d_M$,
it follows that the restriction $\Phi|_{\pd D}$ is 1-Lipschitz
with respect to the metric $d_{M'}$ restricted to
$\pd D\times\pd D$.
Therefore
(cf. \cite[Proposition 1.6]{Iv09} or \cite[Proposition 4.9]{BI})
it admits a 1-Lipschitz extension $\Phi':M'\to\LL$.
For the reader's convenience, here is an explicit formula:
$$
 \Phi'(x)(s) = \inf\{ \Phi(y)(s)+d_{M'}(x,y) :y \in\pd D \} .
$$
Furthermore this extension can be modified so that
$\Phi'(M')\subset\B$ by post-composing it with a cut-off
map $\LL\to\LL$ given by
$$
 \text{cutoff}(\phi)(s) = \min\{R/2,\max\{-R/2,\phi(s)\}\},
 \qquad\phi\in\LL,\ s\in S.
$$
(The cut-off does not affect points in $\Phi'(\pd M')$
since $\Phi'|_{\pd M'}=\Phi|_{\pd D}$ and $\Phi(D)$ is contained
in an $(R/2)$-ball centered at the origin.)
From now on, $\Phi'$ denotes the modified extension.

Consider a map $F=P_\sigma\circ\Phi':M'\to M$.
Since $\Phi'|_{\pd D}=\Phi_{\pd D}$, the first assertion
of Proposition \ref{main prop} implies that
$F|_{\pd M'}=id_{\pd D}$, therefore $F(M')\supset D$.
Then the second assertion of Proposition \ref{main prop}
implies that $\vol(M')\ge\vol(D,g)$.
Thus $(D,g)$ is a minimal filling.

To prove that $(D,g)$ is a strict minimal filling, suppose
that $\vol(M')=\vol(D,g)$. Then $F$ is volume-preserving,
hence by the third assertion of Proposition \ref{main prop}
we have $\Phi'(M')\subset\Phi(M)$.
Therefore $F$ can be written as $F=\Phi^{-1}\circ\Phi'$.
Since $\Phi$ is distance-preserving and $\Phi'$ is
1-Lipschitz, it follows that $F:M'\to M$ is 1-Lipschitz.
Since $F$ is 1-Lipschitz and volume-preserving,
it follows easily that it is an isometry
(cf.\ e.g.\ \cite[Lemma 9.1]{BI}).
Thus $(D,g)$ is a strict minimal filling.
\end{proof}

\begin{proof}[Proof of Theorem \ref{t:rigid}]
Let $D$ and $g$ be as above,
and let $g'$ be a Riemannian metric on $D$
inducing the same boundary distance function
as $g$. Since $(D,g)$ is a strict minimal filling,
it suffices to show that $\vol(D,g')=\vol(D,g)$.
If $(D,g)$ had a convex boundary,
this would follow from the Santal\'o formula
for the volume of  a simple Riemannian metric.
Since we do not assume convexity,
$(D,g)$ may fail to be simple.
However it is  a region in a simple manifold
(namely in a large ball in $M$) and hence
satisfies the 
SGM (Strong Geodesic Minimizing) condition
introduced by C. Croke \cite{Croke91}. Then Lemma 5.1 from \cite{Croke91}
implies the desired equality $\vol(D',g')=\vol(D,g)$.
\end{proof}

The rest of the paper is organized as follows.
In Section \ref{sec-general} we consider
a general class of constructions that work
for any simple manifold $M$ (not necessarily almost hyperbolic).
The main result of this section is
Proposition \ref{p:mean-curvature-zero}
asserting that an isometric image of $M$
in $L^\infty$ is a minimal surface (in the variational sense)
provided that the embedding of $M$ and the Riemannian
structure on $\LL$ used to define the surface area
satisfy certain natural conditions.

In Section \ref{sec-construction} we construct
a distance-preserving map $\Phi:M\to\LL$
(possessing all the nice properties that we need),
a Riemannian structure $G$ in $\B$ used to
define the surface area, and a (preliminary)
projection $P:\B\to M$).
This map does not increase $n$-volumes
in the case of the hyperbolic metric
(that is, when $g=g_0$). 
In the almost hyperbolic case this map
can slightly expand $n$-volumes,
the expansion coefficients are estimated
in Sections \ref{sec-jacobian}
and \ref{sec-correction-factor}.
Finally, in Section \ref{sec-compression}
we compose $P$ with a family of
shrinking maps $M\to M$ so that
the resulting projection $P_\sigma$
strictly decreases $n$-volumes away from
the surface $\Phi(M)$.
This construction completes
the proof of Proposition \ref{main prop}
and the theorems.

\section{General computations}
\label{sec-general}

In this section we do not use the assumption that
our metric is close to the hyperbolic one;
everything here is valid for any Riemannian manifold
$M=(M,g)$.

\label{s:general}
\subsection{Set-up and notation}

Recall that $S=S^{n-1}$ and $\LL=L^\infty(S)$.
We equip $S$ with the standard (Haar) probability measure;
the integral of a function $f$ with respect to this measure
is written as $\int_S f(s)\,ds$.

Let $M=(M,g)$ be an $n$-dimensional Riemannian manifold.
We assume that every two points in $M$ are connected by
a geodesic realizing the distance.
This is always the case if $M$ is complete,
and in the proof of the theorems $M$ is
the hyperbolic space with a perturbed metric.

We denote by $UTM$ the unit tangent bundle of $M$
and by $UT_xM$ its fiber over $x\in M$.
Every sphere $UT_xM$ is equipped with the standard
Haar probability measure; the integral of a function
$f$ with respect to this measure is denoted by
$\int_{UT_xM} f(v)\,dv$.
Note that the integration of these measures on fibers
with respect to the Riemannian volume of $M$
yields the standard Liouville measure on $UTM$.

\begin{definition}\label{d:special-map}
We say that a map $\Phi:M\to\LL$ is a
\textit{special embedding} if
there is a family $\{\Phi_s\}_{s\in S}$ of real-valued
functions on $M$ (that will be referred to as
\textit{coordinate functions} of $\Phi$) such that

(1) For every $x\in M$, the image $\Phi(x)$ is a function
$s\mapsto\Phi_s(x)$ on~$S$ (more precisely, the element of
$\LL$ represented by this function).

(2) The function $(x,s)\mapsto\Phi_s(x)$ is smooth on $M\times S$.

(3) Every function $\Phi_s:M\to\R$ is distance-like,
that is, $|\grad \Phi_s|\equiv 1$ on $M$.

(4) For every $x\in M$, the map $s\mapsto\grad\Phi_s(x)$
is a diffeomorphism between $S$ and $UT_xM$.
\end{definition}

Note that every special embedding $\Phi$ is a distance-preserving map.
Indeed, $\Phi$ is 1-Lipschitz since its coordinate functions are.
To prove that $\|\Phi(x)-\Phi(y)\|_{L^\infty}\ge d_M(x,y)$ for all $x,y\in M$,
consider a unit-speed minimizing geodesic $\gamma$ connecting $x$ to~$y$.
By the 4th condition of Definition~\ref{d:special-map},
there exists $s\in S$ such that
$\grad\Phi_s(x)$ is the initial velocity vector of $\gamma$.
Since $\Phi_s$ is distance-like, its gradient curves are geodesics,
hence $\gamma$ is a gradient curve of $\Phi_s$.
Therefore 
$$
\Phi_s(y)-\Phi_s(x)=\mbox{length}(\gamma)=d_M(x,y) 
$$
and hence $\|\Phi(x)-\Phi(y)\|_{L^\infty}\ge d_M(x,y)$.
Since $\Phi$ is 1-Lipschitz, 
the latter inequality turns into equality
and therefore $\Phi$ is distance-preserving.

\begin{notation}
\label{notation-alpha}
We denote the inverse of the above diffeomorphism
$s\mapsto\grad\Phi_s(x)$ by~$\alpha$.
That is, $\alpha:UTM\to S$ is a map such that
$\grad\Phi_{\alpha(v)}(x)=v$ for all $x\in M$ and $v\in UT_xM$.
By $\alpha_x$ we denote the restriction of $\alpha$ to $UT_xM$,
this map is a diffeomorphism from $UT_xM$ to~$S$.

For every $x\in M$, let $\mu_x$ be the push-forward of the
normalized Haar measure on $UT_xM$ under the diffeomorphism
$\alpha_x:UT_xM\to S$.
Then $\mu_x$ is a smooth probability measure on $S$.
We denote by $\lambda(x,s)$ the density
of $\mu_x$ at $s\in S$ with respect to the
above fixed measure on~$S$.
\end{notation}

The second requirement of Definition \ref{d:special-map}
implies that the image of $\Phi$
lies in the subspace $C^\infty(S)\subset\LL$,
$\Phi$ is a smooth map
(even w.r.t.\ the $C^\infty$ topology in the target),
and its derivative $d_x\Phi:T_xM\to\LL$ is given by
\be\label{e:dPhi}
 d_x\Phi(v)(s) = d_x\Phi_s(v), \qquad v\in T_xM,\ s\in S .
\ee
Let $v_0\in UT_xM$ and $T=d_x\Phi(v_0)$. We rewrite
the above identity as
$$
 T(s) = d\Phi_s(v_0)= \<\grad \Phi_s(x),v_0\> = \<\alpha_x^{-1}(s),v_0\> .
$$
Substituting $s=\alpha(v)$, $v\in UT_xM$ yields
\be\label{e:TPhi}
 T(\alpha(v)) = \<v,v_0\> 
 \qquad\text{if $v_0,v\in UT_xM$ and $T=d_x\Phi(v_0)$}.
\ee
As usual, the image of the derivative $d_x\Phi:T_xM\to\LL$
is denoted by $T_x\Phi$ and referred to
as the tangent space of $\Phi$ at~$x$,
and the elements of $T_x\Phi$ are referred to
as the tangent vectors of $\Phi$.

The density function $\lambda:M\times S\to\R$ is smooth and positive.
The definitions imply that
\be\label{e:int-alpha}
 \int_S f \,d\mu_x = \int_S f(s)\lambda(x,s)\,ds =\int_{UT_xM} f(\alpha(v))\,dv
\ee
for any measurable function $f:S\to\R$.

\begin{definition}
By a \textit{scalar product} on $\LL$ we mean an $L^2$-compatible
scalar product, that is, a symmetric bilinear form $G$ on $\LL$
satisfying
$$
 c\|u\|_{L^2(S)}^2 \le G(u,u) \le C\|u\|_{L^2(S)}^2
$$
for some positive constants $c$ and $C$. The norm of a scalar
product is defined as the minimum possible value of~$C$.

A \textit{Riemannian metric} in an open set $\U\subset\LL$ is
a smooth family $G=\{G_\phi\}_{\phi\in\U}$
of scalar products on $\LL$.
(The smoothness here is that of a map from $\U\subset L^\infty(S)$
to the space of $L^2$-compatible scalar products on $\LL$
equipped with the above norm.)
\end{definition}

\begin{definition}\label{d:special-metric}
Let $G$ be a Riemannian metric in an open set $\U\subset\LL$
containing $\Phi(M)$.
We say that $G$ is \textit{special} with respect to $\Phi$
if the following holds:

(1) For every $\phi\in\U$, the scalar product $G_\phi$
has the form
$$
 G_\phi(X,Y) = n\int_S X(s)Y(s)\, d\nu_\phi(s), \qquad X,Y\in\LL,
$$
where $\nu_\phi$ is a probability measure on $S$.

(2) Every measure $\nu_\phi$ has positive density separated
away from zero;
these densities depend smoothly on $\phi$.

(3) If $\phi=\Phi(x)$ for an $x\in M$, then $\nu_\phi=\mu_x$.
\end{definition}

\begin{lemma}\label{l:G-isometric}
If $G$ is a special Riemannian metric on $\U\subset\LL$ with respect to $\Phi$,
then $\Phi$ is an isometric immersion of $M$ to $(\U,G)$.
\end{lemma}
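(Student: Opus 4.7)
The plan is to verify that for every $x\in M$ and every $v_0\in T_xM$ one has $G_{\Phi(x)}(d_x\Phi(v_0),d_x\Phi(v_0))=\langle v_0,v_0\rangle$. Since $\Phi$ is known to be smooth and (by the discussion following Definition~\ref{d:special-map}) distance-preserving, this quadratic identity will immediately give that $\Phi$ is an isometric immersion.

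First I would set $T=d_x\Phi(v_0)\in\LL$. By formula~\eqref{e:TPhi}, $T$ has the explicit description $T(\alpha(v))=\langle v,v_0\rangle$ for $v\in UT_xM$, where $\alpha=\alpha_x$ denotes the diffeomorphism from Notation~\ref{notation-alpha}. Applying the third condition of Definition~\ref{d:special-metric}, the measure $\nu_{\Phi(x)}$ coincides with $\mu_x$, so
\[
G_{\Phi(x)}(T,T)=n\int_S T(s)^2\,d\mu_x(s).
\]
Then I would use \eqref{e:int-alpha} with $f(s)=T(s)^2$ to transport the integral to the unit tangent sphere, obtaining
\[
G_{\Phi(x)}(T,T)=n\int_{UT_xM}T(\alpha(v))^2\,dv=n\int_{UT_xM}\langle v,v_0\rangle^2\,dv.
\]

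The last integral is the classical second moment of the normalized Haar measure on $UT_xM\cong S^{n-1}$ against a fixed vector, which equals $|v_0|^2/n$ (this follows by noting that the integrand $v\mapsto\langle v,v_0\rangle^2$ has average $|v_0|^2/n$ because $\int v_iv_j\,dv=\delta_{ij}/n$ by rotational invariance and the trace identity $\sum_i\int v_i^2\,dv=1$). Therefore the factor $n$ in the definition of $G$ cancels exactly, yielding $G_{\Phi(x)}(T,T)=|v_0|^2$. Polarization gives the full bilinear identity, completing the proof.

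There is no real obstacle here; the lemma is a normalization check, and the entire argument is dictated by the way Definitions~\ref{d:special-map} and~\ref{d:special-metric} have been set up — the factor $n$ in $G$ is designed precisely to absorb the $1/n$ coming from averaging $\langle\cdot,v_0\rangle^2$ over the sphere. The only point one should mention is that the injectivity of $d_x\Phi$ (needed to call $\Phi$ an \emph{immersion}) follows automatically from the isometry relation just proved.
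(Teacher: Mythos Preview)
Your proof is correct and follows essentially the same route as the paper: both compute $G_{\Phi(x)}(d_x\Phi(v_0),d_x\Phi(v_0))$ via the chain Definition~\ref{d:special-metric}(3) $\to$ \eqref{e:int-alpha} $\to$ \eqref{e:TPhi} $\to$ the spherical average $\int_{UT_xM}\langle v,v_0\rangle^2\,dv=|v_0|^2/n$. The only cosmetic difference is that the paper normalizes to $v_0\in UT_xM$ so the answer is $1$, whereas you keep $v_0$ arbitrary; your added remarks on polarization and injectivity of $d_x\Phi$ are fine but not strictly needed.
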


\begin{proof}
Let $x\in M$, $\phi=\Phi(x)$, $v_0\in UT_xM$, $T=d\Phi(v_0)$. Then
$$
 G_\phi(T,T) = n\int_S T^2\,d\mu_x = n\int_{UT_xM} T(\alpha(v))^2 \,dv
 = n\int_{UT_xM} \<v,v_0\>^2\,dv = 1,
$$
hence the result. The first three equalities in this computation follow
from Definition \ref{d:special-metric}(3), \eqref{e:int-alpha} and
\eqref{e:TPhi}, and the last one is a standard computation on the sphere.
\end{proof}

\subsection{Mean curvature}

\begin{definition}\label{d:II}
Let $G$ be a Riemannian metric in an open set $\U\subset\LL$,
$\Phi:M\to\U$ a smooth surface, $x\in M$,
$V\in\LL$ a vector orthogonal to $T_x\Phi$
with respect to~$G$.
We define a quadratic form on $T_x\Phi$,
called the \textit{second fundamental form}
with respect to $V$ and denoted by $\II_V$, as follows.

Let $F$ be a smooth finite-dimensional submanifold of $\U$
containing a neighborhood of $\phi=\Phi(x)$ in $\Phi(M)$
and such that $V$ is tangent to $F$ at~$\phi$.
Let $T\in T_x\Phi$. Construct a smooth vector field
$\tilde T$ tangent to $\Phi$ near~$\phi$ such that
$\tilde T(\phi)=T$. Then define
$$
 \II_V^{\Phi,x}(T,T) = G(\nabla_T\tilde T,V)
$$
where $\nabla$ is the Levi-Civita connection
of the induced Riemannian metric on~$F$.

The trace of the quadratic form $T\mapsto \II_V(T,T)$
(with respect to the Euclidean structure on $T_x\Phi$
defined by $G=G_\phi$) is called the \textit{mean curvature}
with respect to $V$ and denoted by $H_V^{\Phi,x}$.
\end{definition}

It is easy to see (cf.\ e.g.\ \eqref{e:IIaux} below)
that the second fundamental form does not depend
on the choice of the auxiliary submanifold $F$.
The fact that it does not depend on $\tilde T$
follows from the standard (finite-dimensional)
Riemannian geometry.

\begin{proposition}\label{p:mean-curvature-zero}
Let $\Phi:M\to\LL$ be a special embedding and
$G$ is a special Riemannian metric with respect to~$\Phi$.
Then for every $x\in M$ and every vector $V\in\LL$ orthogonal
to $T_x\Phi$ (with respect to $G$) one has
$$
 H_V^{\Phi,x} = 0 .
$$
\end{proposition}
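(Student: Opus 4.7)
The plan is to compute $H_V$ directly by a Levi-Civita calculation on $\U$, obtain a closed-form expression for it as a divergence on $M$, and recognize the vanishing of that divergence as Liouville invariance of the geodesic flow.

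First I reduce $\II_V$ to something intrinsic. Since $V\in T_\phi F$ is $G$-orthogonal to $T_x\Phi$, the Gauss formula for $\Phi(M)\subset F$ gives $G(\nabla^F_T\tilde T,V) = G(\bar\nabla^\U_T\tilde T,V)$, where $\bar\nabla^\U$ denotes the Levi-Civita connection of $G$ on $\U$; in particular $\II_V$ does not depend on the auxiliary $F$. I decompose $\bar\nabla^\U = \bar\nabla^E + \Gamma_\phi$ into the flat vector-space connection on $\LL$ plus a Christoffel term encoding the $\phi$-dependence of $G$. Pick an orthonormal basis $\{v_i\}$ of $T_xM$, set $T_i = d_x\Phi(v_i)$ (orthonormal in $T_x\Phi$ by Lemma~\ref{l:G-isometric}), and extend $T_i$ along the geodesic $c_i$ with $\dot c_i(0)=v_i$ by $\tilde T_i(\Phi(c_i(t)))(s) = d\Phi_s(\dot c_i(t))$. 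Because $c_i$ is a geodesic, $(\bar\nabla^E_{T_i}\tilde T_i)(s) = \operatorname{Hess}\Phi_s(v_i,v_i)$, and summing over $i$ gives $\Delta_g\Phi_s(x)$.

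For the Christoffel piece, Koszul's formula applied to constant vector fields in $\LL$ yields $2G_\phi(\Gamma_\phi(T,T),V) = 2(\partial_T G)(T,V) - (\partial_V G)(T,T)$. Writing $\nu_\phi = \rho_\phi\,ds$, two elementary observations collapse the sum over $i$. The normalization $\sum_i T_i(s)^2 = |\grad\Phi_s(x)|^2 = 1$, combined with $\int_S \partial_V\rho\,ds = 0$ (each $\rho_\phi$ is a probability density), makes $\sum_i(\partial_V G)(T_i,T_i)$ vanish. Since each $T_i$ is tangent to $\Phi(M)$, $\partial_{T_i}\rho|_\phi$ coincides with $\partial_{v_i}\lambda(\cdot,s)|_x$, so $\sum_i T_i(s)\,\partial_{T_i}\rho = \<\grad_M\lambda(\cdot,s),\grad\Phi_s(x)\>$. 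Assembling the $\bar\nabla^E$ and $\Gamma_\phi$ contributions and applying $\div_M(\lambda X) = \lambda\div_M X + \<\grad\lambda,X\>$ produces
\[
H_V = n\int_S V(s)\,\div_M\!\bigl(\lambda(\cdot,s)\grad\Phi_s\bigr)(x)\,ds.
\]

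It remains to show $\div_M(\lambda(\cdot,s)\grad\Phi_s)\equiv 0$ for every $s\in S$; this is Liouville invariance in disguise. Since $\Phi_s$ is distance-like, its gradient flow coincides with the unit-speed geodesic flow $g^t$ on the section $x\mapsto\grad\Phi_s(x)$, so $\alpha:UTM\to S$ is constant along $g^t$-orbits. The diffeomorphism $\Psi:M\times S\to UTM$ defined by $\Psi(x,s)=\grad\Phi_s(x)$ therefore conjugates $g^t$ to the flow $F^t(x,s)=(\phi^t_s(x),s)$, where $\phi^t_s$ is the flow of $\grad\Phi_s$. The defining identity $\mu_x = (\alpha_x)_*dv$ translates into $\Psi^*\Omega = \lambda(x,s)\,d\vol_M(x)\,ds$, where $\Omega$ is the Liouville measure on $UTM$. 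Since $g^t$ preserves $\Omega$, $F^t$ preserves $\lambda\,d\vol_M\,ds$, and differentiating in $t$ at $0$ gives the Lie-derivative identity $\div_M(\lambda(\cdot,s)\grad\Phi_s)=0$, finishing the proof. The main technical delicacy is tracking Koszul's formula carefully in the infinite-dimensional $\U$; once that bookkeeping is in place, the two cancellation mechanisms — the normalization $\sum_i T_i(s)^2 = 1$ together with $\int_S\partial_V\rho\,ds = 0$, and Liouville invariance collapsing the remainder into a total divergence — are what drive the argument through.
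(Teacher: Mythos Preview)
Your proof is correct and follows essentially the same route as the paper's: both expand $\II_V$ via a Koszul-type identity (the paper's \eqref{e:IIaux} is exactly your flat-plus-Christoffel decomposition), kill the $(\partial_V G)(T,T)$ term using $\sum_i T_i(s)^2=1$ together with the probability normalization, and eliminate the remainder by the Liouville-invariance argument that you and the paper's Lemma~\ref{l:Ls-is-Laplacian} both carry out. The only organizational difference is that you package the Hessian term and the $\lambda$-derivative term together as $\div_M(\lambda(\cdot,s)\grad\Phi_s)$ and then show this divergence vanishes, whereas the paper keeps the two terms separate and shows they cancel; the underlying computation is identical.
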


\begin{proof}
We use the notation from Definition \ref{d:II}.
Extend $V$ and $T$ to commuting smooth
vector fields $\tilde V$ and $\tilde T$
tangent to $F$ and defined in
a neighborhood of $\Phi(x)$ in~$F$.
In addition, $\tilde T$ should be tangent to $\Phi(M)$.
Then
$$
 \II_V(T,T) = G(\nabla_T\tilde T,V)
 =G(\tilde T,\tilde V)'_T - \frac12 G(\tilde T,\tilde T)'_V
$$
by Riemannian geometry on $F$. Here the notation like
$(\dots)'_X$ denotes the derivative
along a vector $X\in\LL$; this derivative is well-defined
whenever the argument is defined along any smooth curve in $\LL$
with initial direction~$X$. In our case this requirement is
satisfied since $\tilde T$ and $\tilde V$ are defined along~$F$.
Differentiating the above $G$-products yields
$$
 G(\tilde T,\tilde V)'_T
 = G'_T(T,V)+G(\tilde T'_T,V)+G(T,\tilde V'_T)
$$
and
$$
 \frac12 G(\tilde T,\tilde T)'_V = \frac12 G'_V(T,T)+ G(T,\tilde T'_V) .
$$
Since $\tilde T$ and $\tilde V$ commute, we have $\tilde V'_T=\tilde T'_V$, thus
\be\label{e:IIaux}
 \II_V(T,T) = G'_T(T,V)+G(\tilde T'_T,V)-\frac12 G'_V(T,T) .
\ee
We are going to compute the traces of the three terms of this sum separately.
Let $v\in UT_xM$ be such that $T=d\Phi(v)$.
We may assume that the vector field $\tilde T$ is chosen so that
its trajectory through $\Phi(x)$ is the $\Phi$-image
of a constant-speed geodesic $\gamma_v$ in~$M$ such that $\gamma(0)=x$
and $\dot\gamma_v(0)=v$.
Then the first term in \eqref{e:IIaux} takes the form
$$
\begin{aligned}
 G'_T(T,V) 
 &= \frac d{dt}\bigg|_{t=0} G_{\Phi(\gamma_v(t))}(T,V)
 = n\cdot\frac d{dt}\bigg|_{t=0}\int_S T(s)V(s) \lambda(\gamma_v(t),s)\,ds \\
 &= n\int_S V(s) T(s) L_s(v)\,ds
\end{aligned}
$$
where
\be\label{e:defLs}
 L_s(v)=\frac d{dt}\bigg|_{t=0}\lambda(\gamma_v(t),s) .
\ee
(The second identity above follows from the special form 
of $G$ on $\Phi(M)$, cf.\ Definition \ref{d:special-metric}.)
Substituting $T=d\Phi(v)$ and using \eqref{e:dPhi} yields
$$
 G'_T(T,V)= \int_S V(s)\cdot d\Phi_s(v)\cdot L_s(v)\,ds .
$$
Hence
$$
 \trace_{T_x\Phi} \big[T\mapsto G'_T(T,V)\big]
 = n\int_S V(s)\cdot\trace_{T_xM}
 \big[v\mapsto d\Phi_s(v)\cdot L_s(v)\big]\, ds.
$$
Recall that $|\grad\Phi_s|\equiv1$ by the 3rd requirement of Definition \ref{d:special-map}.
Fix an $s\in S$ and choose an orthonormal basis $(v_1,\dots,v_n)$ of $T_xM$
such that $v_1=\grad\Phi_s(x)$.
Then $d\Phi_s(v_1)=1$ and $d\Phi_s(v_i)=0$ for all $i>1$.
Hence
$$
 \trace_{T_xM}
 \big[v\mapsto d\Phi_s(v)\cdot L_s(v)\big]
 = \sum_{i=1}^n d\Phi_s(v_i)\cdot L_s(v_i) = L_s(v_1)
 = L_s(\grad\Phi_s(x)) .
$$
Thus
\be\label{e:IIaux2}
\trace_{T_x\Phi} \big[T\mapsto G'_T(T,V)\big]
 = n\int_S V(s)\cdot L_s(\grad\Phi_s(x))\,ds .
\ee

\begin{lemma}\label{l:Ls-is-Laplacian}
For every $x\in M$ and $s\in S$, one has
$$
 L_s(\grad\Phi_s(x)) =-\lambda(x,s)\cdot\Delta \Phi_s(x)
$$
where $\Delta$ is the Riemannian Laplace operator on~$M$.
\end{lemma}

\begin{proof}
Consider a map $I:M\times S\to UTM$ given by
$I(x,s)=\grad\Phi_s(X)$. Since $\Phi$ is a special map
(cf.\ Definition \ref{d:special-map}(4)),
$I$ is a diffeomorphism;
its inverse $I^{-1}:UTM\to M\times S$ is given by
$I^{-1}(v)=(x,\alpha(v))$ for $v\in UT_xM$.

Consider a vector field $W$ on $M\times S$
given by
$$
 W(x,s) = (\grad\Phi_s(x),0) .
$$
The flow on $M\times S$ generated by this vector field
is mapped by $I$ to the geodesic flow on~$UTM$.
This follows from the fact that every function $\Phi_s$
is distance-like (cf.\ Definition \ref{d:special-map}(3))
and hence the trajectories of its gradient flow are geodesics.

Substituting $v=\grad\Phi_s(x)$ into \eqref{e:defLs} yields
$$
 L_s(\grad\Phi_s(x)) = \lambda'_W(x,s)
$$
where $\lambda'_W$ denotes the derivative of $\lambda$
along $W$.

Let $\mu$ denote the standard product measure on $M\times S$,
that is, the product of the Riemannian volume form on $M$
and the standard measure (the one denoted by $ds$) on~$S$.
Then the measure $\lambda\mu$ (that is, the measure
with density $\lambda$ with respect to~$\mu$) is mapped
by $I$ to the Liouville measure on $UTM$.
Since the Liouville measure is preserved by the geodesic flow,
the measure $\lambda\mu$ is preserved by the flow generated by $W$.
Hence 
$$
\div_{\lambda\mu} W = 0
$$
where $\div$ denotes the divergence (with respect to a given measure).
On the other hand,
$$
 \div_{\lambda\mu}W = \div_\mu W + \lambda^{-1} \lambda'_W .
$$
Thus
$$
 L_s(\grad\Phi_s(x)) = \lambda'_W(x,s)
 = -\lambda(x,s)\cdot \div_\mu W(x,s) .
$$
Since $W(x,s) = (\grad\Phi_s(x),0)$ and $\mu$
is the product measure, we have
$$
 \div_\mu W(x,s) = \div_M \grad\Phi_s(x) = \Delta\Phi_s(x),
$$
and Lemma \ref{l:Ls-is-Laplacian} follows.
\end{proof}

With this lemma, \eqref{e:IIaux2} takes the form
\be\label{e:II-1st}
\trace_{T_x\Phi} \big[T\mapsto G'_T(T,V)\big]
 = -n\int_S V(s)\cdot\lambda(x,s)\cdot\Delta \Phi_s(x) \,ds.
\ee

Now consider the second term of \eqref{e:IIaux}.
Recall that $T=d_x\Phi(v)$ and $\tilde T$ contains
the velocity field of the geodesic $\Phi\circ\gamma_v$.
Hence
$$
 \tilde T'_T = \frac d{dt}\bigg|_{t=0} d\Phi(\dot\gamma_v(t))
 = \frac {d^2}{dt^2}\bigg|_{t=0} \Phi(\gamma(t)) .
$$
The right-hand side is a function on $S$ whose value at $s\in S$
equals 
$$
\frac {d^2}{dt^2}\big|_{t=0} \Phi_s(\gamma_v(t)) = D^2\Phi_s(v,v)
$$
where $D^2$ denotes the Hessian w.r.t.\ the Riemannian metric of~$M$.
Hence
$$
 G(\tilde T'_T,V) = n\int_S D^2\Phi_s(v,v)\cdot V(s)\cdot\lambda(x,s)\,ds
$$
(here we use the special form of $G$ at $\Phi(x)$,
cf.\ Definition \ref{d:special-metric}).
Hence
\be\label{e:II-2nd}
\begin{aligned}
 \trace_{T_x\Phi}\big[T\mapsto G(\tilde T'_T,V)\big]
 &= n\int_S V(s)\cdot\lambda(x,s)\cdot\trace_{T_xM}[D^2\Phi_s(x)]\,ds \\
 &= n\int_S V(s)\cdot\lambda(x,s)\cdot\Delta\Phi_s(x)\,ds .
\end{aligned}
\ee
Adding together \eqref{e:II-1st} and \eqref{e:II-2nd} yields
\be\label{e:II-1+2}
\trace_{T_x\Phi} \big[T\mapsto G'_T(T,V)+G(\tilde T'_T,V)\big] = 0 .
\ee

It remains to get rid of the third term in \eqref{e:IIaux}.
Let $(T_1,\dots,T_n)$ be an orthonormal basis of $T_x\Phi$.
Then
\be\label{e:IIaux4}
 \trace\big[T\mapsto G'_V(T,T)\big]
 = \sum_{i=1}^n G'_V(T_i,T_i)
 = n\sum_{i=1}^n\int_S T_i(s)^2 (\rho'_V)(s) \,ds
\ee
where $\rho=\{\rho_\phi\}_{\phi\in\U}$ is the density
of the measure $\nu_\phi$ from
Definition \ref{d:special-metric} regarded as a function
on $\U$ (with values in $L^\infty(S)$) and $\rho'_V$
is its derivative along~$V$.

Let  $T_i=d_x\Phi(v_i)$ where $v_i\in T_xM$ for $i=1,\dots,n$.
Since $\Phi$ is isometric by Lemma \ref{l:G-isometric},
$(v_1,\dots,v_n)$ is an orthonormal basis of $T_xM$.
Then by \eqref{e:TPhi} we have
$$
 \sum T_i(\alpha(v))^2 = \sum\<v,v_i\>^2 = |v|^2 = 1
$$
for all $v\in UT_xM$. Since $\alpha_x:UT_xM\to S$
is surjective, it follows that $\sum T_i(s)^2=1$ for all $s\in S$.
Then \eqref{e:IIaux4} takes the form
\be\label{e:II-3rd}
\trace\big[T\mapsto G'_V(T,T)\big]
= n\int_S (\rho'_V)(s) \,ds = n\left(\int_S \rho\right)'_V = 0
\ee
since the integral of $\rho$ is fixed to be 1
(cf.\ Definition \ref{d:special-metric}(2)).
Now \eqref{e:IIaux}, \eqref{e:II-1+2} and \eqref{e:II-3rd}
imply that
$$
 H_V^{\Phi,x} = \trace_{T_x\Phi}(\II_V^{\Phi,x}) = 0 -\frac12\cdot 0 = 0.
$$
This finishes the proof of Proposition \ref{p:mean-curvature-zero}.
\end{proof}

\subsection{Jacobians and projections}

\begin{definition}\label{d:linear-jacobian}
Let $G$ be a scalar product on $\LL$, $X$ an $n$-dimensional
Euclidean space and $L:\LL\to X$ a linear map
bounded with respect to $G$
(or, equivalently, with respect to the standard
$L^2$ scalar product).
For an $n$-dimensional subspace $Y\subset\LL$,
we denote by $J_{G,Y}$ the Jacobian
(that is, the $n$-dimensional volume expansion coefficient)
of the restriction $L|_Y$ where $Y$ is regarded as a
Euclidean space whose scalar product is the restriction of $G$.

The ($n$-dimensional) Jacobian of $L$,
denoted by $J_GL$, is the supremum 
of $J_{G,Y}$ where $Y$ ranges over all $n$-dimensional
linear subspaces of $\LL$.
\end{definition}

Obviously $JL=0$ if the rank of $L$ is less than $n$.
If the rank equals $n$, then the supremum is attained
when $Y$ is the orthogonal complement to $\ker L$.
It follows that $J_GL$ depends continuously on $L$ and $G$,
moreover this dependence is smooth on the set where $J_GL\ne 0$.

\begin{definition}\label{d:jacobian}
Let $G=\{G_\phi\}_{\phi\in\U}$ be a Riemannian metric
in an open set $\U\subset\LL$, and let $P:\U\to M$ be an $L^2$-smooth map
(see  Definition \ref{d:L^2-smooth} below).
For an $\phi\in\U$, we define the $n$-dimensional Jacobian
of $P$ at $\phi$, denoted by $J_GP(\phi)$, as the Jacobian
of the derivative $d_\phi P:\LL\to T_{P(\phi)}M$ with respect
to the scalar product $G_\phi$ on $\LL$ and the Riemannian
scalar product on $T_{P(\phi)}M$.

For an $n$-dimensional subspace $Y\subset T_\phi\LL=L$,
we denote by $J_{G,Y}P$ the Jacobian of the restriction
of $d_\phi P$ to~$Y$. 

We will omit $G$ in these notations if the metric is clear from context.
\end{definition}

\begin{definition}\label{d:L^2-smooth}
We say that a map from an open set $\U\subset\LL$ to $M$ is $L^2$-smooth if
it is differentiable (with respect to the $L^\infty$ structure on $\U$), its derivative at every point $\phi\in\U$
can be extended as a bounded linear map from $L^2$ to a fiber of $TM$
and this map from $L^2$ in its turn smoothly depends on~$\phi$. 
\end{definition}

Note that the Jacobian of an $L^2$-smooth map is a continuous function and
moreover it is smooth in the complement of its zeros.

\begin{definition}\label{d:projection}
Let $\Phi:M\to\U\subset\LL$ be a smooth isometric immersion
with respect to a Riemannian metric $G$ on $\U$.
We say that a map $P:\U\to M$ is
a \textit{projection} if it is $L^2$-smooth and

(1) $P\circ\Phi=id_M$ .

(2) For every $x\in M$, $d_{\Phi(x)}P(V)=0$
for every vector $V\in\LL$ orthogonal (with respect to $G$)
to $T_x\Phi$.
\end{definition}

\begin{proposition}\label{p:dJ}
Let $\Phi:M\to\U\subset\LL$ be a smooth isometric immersion
with respect to a Riemannian metric $G$ on $\U$
and $P:\U\to M$ a projection in the sense of Definition \ref{d:projection}.
Then, for every $x\in M$ and every $V\in\LL$ orthogonal to $T_x\Phi$,
$$
 d_{\Phi(x)}J_GP(V) =  H_V^{\Phi,x} .
$$
In particular, if $\Phi$ is a special embedding and $G$ is
a special metric with respect to~$\Phi$, then
$$
d_{\Phi(x)}J_GP(V)=0 .
$$
\end{proposition}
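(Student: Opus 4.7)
The plan is to reduce the computation of $d_{\Phi(x)}J_GP(V)$ to a Gram-matrix derivative and then match the result with the expression for $H_V^{\Phi,x}$ that was derived inside the proof of Proposition \ref{p:mean-curvature-zero}. First, fix a $G_\phi$-orthonormal basis $(T_1,\dots,T_n)$ of $T_x\Phi$ (where $\phi:=\Phi(x)$) and set $v_i:=d_\phi P(T_i)\in T_xM$. Since $P\circ\Phi=id_M$ and $\Phi$ is isometric, $d_\phi P\circ d_x\Phi=id_{T_xM}$, and the projection condition forces $d_\phi P$ to vanish on the $G_\phi$-orthogonal complement of $T_x\Phi$; hence $(v_1,\dots,v_n)$ is $g_x$-orthonormal and $J_GP(\phi)=1$. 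Choose a smooth curve $\phi_t$ with $\phi_0=\phi$, $\dot\phi_0=V$, and smooth extensions $\tilde T_i$ of $T_i$ to vector fields on $\U$. Setting
\[
A(t)=\det\bigl[G_{\phi_t}(\tilde T_i(\phi_t),\tilde T_j(\phi_t))\bigr],\qquad
B(t)=\det\bigl[g_{P(\phi_t)}(d_{\phi_t}P\tilde T_i,\,d_{\phi_t}P\tilde T_j)\bigr],
\]
the Jacobian satisfies $J_GP(\phi_t)\ge\sqrt{B(t)/A(t)}$ with equality at $t=0$, since the supremum in Definition \ref{d:linear-jacobian} is attained on $(\ker d_\phi P)^\perp=T_x\Phi$. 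Both sides are smooth (using the $L^2$-smoothness of $P$), so the difference is a nonnegative smooth function vanishing at $0$ and their derivatives coincide: $d_\phi J_GP(V)=\tfrac12(B'(0)-A'(0))$.

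Next, I would differentiate explicitly. A direct expansion yields $A'(0)=\sum_i[G'_V(T_i,T_i)+2G(\partial_V\tilde T_i,T_i)]$, where $\partial_V$ denotes the ordinary directional derivative of a vector field on $\U$. The crucial simplification for $B'(0)$ is that $V\perp T_x\Phi$ with respect to $G_\phi$ forces $d_\phi P(V)=0$, so $P(\phi_t)$ has vanishing initial velocity and the $t$-derivative of $g_{P(\phi_t)}$ contributes nothing at $0$. Working in normal coordinates on $M$ at $x$ (identifying $T_{P(\phi_t)}M\cong T_xM$ to first order), one obtains
\[
B'(0)=2\sum_i\bigl\langle(\partial_V dP)(T_i)+d_\phi P(\partial_V\tilde T_i),\,v_i\bigr\rangle.
\]
The adjoint identity $d_\phi P^*=d_x\Phi$ (a direct consequence of $P\circ\Phi=id_M$ together with the orthogonality of $\ker d_\phi P$ to $T_x\Phi$) gives $\langle d_\phi P(X),v_i\rangle=G(X,T_i)$ for every $X\in\LL$, so the extension-dependent pieces of $A'(0)$ and $B'(0)$ cancel, leaving the extension-free formula
\be\label{e:plan-dJ}
d_\phi J_GP(V)=\sum_i\bigl\langle(\partial_V dP)(T_i),v_i\bigr\rangle-\tfrac12\sum_iG'_V(T_i,T_i).
\ee

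Finally, I would identify the first sum in \eqref{e:plan-dJ} with the remaining half of $\II_V$. Torsion-freeness of the covariant Hessian of $P$ (the target $M$ carries the Levi-Civita connection while $\LL$ is flat) gives $(\partial_V dP)(T_i)=(\partial_{T_i}dP)(V)$, so we can compute the latter by moving along $\Phi(M)$: set $\gamma_i(s)=\exp_x(sv_i)$ and $\psi_i(s)=\Phi(\gamma_i(s))$, so that $\dot\psi_i(0)=T_i$. The projection condition forces $d_{\Phi(y)}P=(d_y\Phi)^{-1}\circ\pi_y$ along $\Phi(M)$, where $\pi_y$ is the $G_{\Phi(y)}$-orthogonal projection to $T_y\Phi$, so
\[
d_{\psi_i(s)}P(V)=\sum_jG_{\psi_i(s)}(V,\tilde T_j(\psi_i(s)))\,v_j,
\]
for the tangent extension $\tilde T_j(\Phi(y)):=d_y\Phi(v_j)$ (with $v_j$ fixed in normal coordinates). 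Differentiating at $s=0$, using $G_\phi(V,T_j)=0$, and pairing with $v_i$ yields
\[
\bigl\langle(\partial_{T_i}dP)(V),v_i\bigr\rangle=G'_{T_i}(T_i,V)+G(\partial_{T_i}\tilde T_i,V).
\]
Summing over $i$ and substituting into \eqref{e:plan-dJ}, the right-hand side becomes exactly $\sum_i\II_V^{\Phi,x}(T_i,T_i)=H_V^{\Phi,x}$ via identity \eqref{e:IIaux} from the proof of Proposition \ref{p:mean-curvature-zero}, proving the general claim; the special case $d_{\Phi(x)}J_GP(V)=0$ then follows from that proposition. The hard part is making the infinite-dimensional Hessian-symmetry and adjoint manipulations rigorous in the mixed $L^\infty/L^2$ setting, which is exactly what the $L^2$-smoothness hypothesis (Definition \ref{d:L^2-smooth}) is designed to provide.
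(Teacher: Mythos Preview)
Your argument is correct and takes a genuinely different route from the paper's. The paper reduces to a finite-dimensional submanifold $F\supset\Phi(M)$ containing the curve $\phi_t$ and tangent to the Jacobian-realizing subspaces, then constructs a variation $\{\Phi_t\}$ of $\Phi$ inside $F$ with $P\circ\Phi_t=id_M$, initial velocity $V$ at $x$, and $d_x\Phi_t$ spanning the maximizing subspace. This yields $J_GP(\Phi_t(x))=(J\Phi_t(x))^{-1}$, so the derivative of $J_GP$ along $V$ is minus the first variation of the $n$-area of $\Phi_t$, which is classically $-(-H_V^{\Phi,x})=H_V^{\Phi,x}$.

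By contrast, you stay in $\LL$ and unpack the Jacobian as a Gram-determinant quotient, then use the Hessian symmetry $(\partial_V dP)(T_i)=(\partial_{T_i}dP)(V)$ and the explicit description of $d_{\Phi(y)}P$ along the surface to land on formula \eqref{e:IIaux} term by term. Two small points worth flagging: your displayed expression $d_{\psi_i(s)}P(V)=\sum_j G_{\psi_i(s)}(V,\tilde T_j)v_j$ omits the inverse Gram factor $(G^{jk})^{-1}$, which is harmless only because $G_\phi(V,T_j)=0$ kills the extra term upon differentiation at $s=0$; and the symbol $\tilde T_i$ is used for two a priori different extensions, which is fine only because your intermediate formula \eqref{e:plan-dJ} is extension-free. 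The paper's route is shorter and invokes the classical first-variation formula as a black box after the finite-dimensional reduction; yours is more self-contained and in effect rederives that formula in situ, at the cost of having to justify the mixed-partial symmetry directly in the $L^\infty/L^2$ setting (which, as you note, is exactly what Definition \ref{d:L^2-smooth} supplies).
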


\begin{proof}
It suffices to verify this fact for a finite-dimensional
Riemannian manifold $F$ in place of $(\LL,G)$.
Indeed, it suffices to compute the derivative
of $JP$ along a smooth curve $\gamma$
in $\U$ starting at $\Phi(x)$ with initial velocity $V$.
At every point $\gamma(t)$, there is a unique
$n$-dimensional subspace $Y(t)$ of $\LL$ where
the maximum in the definition of the Jacobian is attained,
and $Y(t)$ depends smoothly on~$t$.
Furthermore, $Y(0)$ is the tangent space of $T_x\Phi$.
Therefore the derivative of the Jacobian can be
computed within a smooth submanifold $F$
containing (locally) $\Phi(M)$ and $\gamma$ and
such that every subspace $Y(t)$ is tangent to $F$
at $\gamma(t)$. It is easy to construct such a submanifold
of dimension $n+1$.

In the finite-dimensional case, there exists a smooth variation
$\{\Phi_t\}_{t\in(-\ep,\ep)}$ of $\Phi$ such that

(1) $P\circ\Phi_t=id_M$ for all $t$;

(2) $\frac d{dt}\big|_{t=0}\Phi_t(x)=V$;

(3) the image of $d_x\Phi_t$ is the subspace
realizing the Jacobian of $P$.

\noindent
The assumptions on $P$ imply that the variation field
$\frac d{dt}\big|_{t=0}\Phi_t$ is orthogonal to our
surface $\Phi$.
Then $J_{\Phi_t(x)}=  J\Phi_t(x)^{-1}$, hence
$$
 d_{\Phi(x)} JP(V) = - \frac d{dt} J\Phi_t(x) .
$$
The right-hand side is the first variation of the
$n$-dimensional surface area which is known
to be equal to $-H_V^{\Phi,x}$.
\end{proof}

\subsection{Surface area}

Let $N$ be a smooth $n$-dimensional manifold
and $f:N\to\LL$ a Lipschitz map.
We say that $f$ is {\em weakly differentiable}
at $p\in N$ if there exists a linear map
$d_p^wf:T_pN\to\LL$ (called the weak derivative of $f$ at $x$)
such that, for every $u\in L^1(S)$ the map
$$
  x \mapsto \langle u, f(x)\rangle : N\to\R 
$$
is differentiable at $p$, and its derivative at $p$ 
is the linear map
$$
 v \mapsto \langle u,d_p^wf\rangle : T_pN\to \R.
$$
Here the angle brackets denote the standard
pairing between $L^1(S)$ and $\LL=L^\infty(S)$.

It can be shown (cf.\ \cite[\S5]{BI}
or \cite[\S3]{Iv09}) that every Lipschitz map
$f:N\to\LL$ is weakly differentiable a.e.\ on $N$.
Moreover if $N$ is a Riemannian manifold
and $f$ is 1-Lipschitz, then $d_p^wf:T_pN\to\LL$ is
a 1-Lipschitz linear map for a.e.\ $p\in N$.

Let $G$ be a Riemannian structure in an open subset
of $\LL$ containing $f(N)$. For every $x\in N$
where $f$ is differentiable, the weak derivative
$d_x^wf$ determines a pull-back nonnegative definite quadratic form
$f^*G(x):=(d_x^wf)^*(G_{f(x)})$.
This quadratic form determines a nonnegative $n$-volume
density on $T_xN$, denoted by $d\vol_G f(x)$.
The $n$-volume of the surface $f$ with respect to $G$ is defined by
$$
 \vol_G(f) = \int_N d\vol_G f .
$$
(There is a minor technical detail to show that the density
under the integral is measurable. We leave this as an
exercise to the reader. A reader who prefers to skip this
exercise can replace the integral by the upper integral
in the above definition).

\begin{lemma}
Let $N$ be an $n$-dimensional Riemannian manifold
and $f:N\to\LL$ a 1-Lipschitz map.
Suppose that $G$ is a special Riemannian metric
(with respect to some $\Phi$, cf. Definition \ref{d:special-metric}).
Then $f$ does not increase $n$-volumes, that is,
$$
 \vol_G (f) \le \vol(N) .
$$
\end{lemma}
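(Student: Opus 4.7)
The plan is to reduce the inequality $\vol_G(f) \le \vol(N)$ to a pointwise Jacobian bound, which then follows from the arithmetic--geometric mean inequality applied to the eigenvalues of a Gram matrix.

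First I would invoke the fact recalled just before the lemma: $f$ is weakly differentiable at almost every $x \in N$, and at each such point the weak derivative $L := d_x^w f \colon T_xN \to \LL$ is a 1-Lipschitz linear map. The density $d\vol_G f(x)$ is determined by the pullback quadratic form $L^* G_{f(x)}$; writing $A$ for its Gram matrix in a basis of $T_xN$ that is orthonormal for the Riemannian metric of $N$, the ratio of $d\vol_G f(x)$ to the Riemannian density is $\sqrt{\det A}$. So the lemma reduces to showing $\det A \le 1$ almost everywhere on $N$ and then integrating.

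Next I would represent $L$ by a measurable family of coefficient vectors. Fix an orthonormal basis $e_1,\dots,e_n$ of $T_xN$ and set $w(s) := (L(e_1)(s),\dots,L(e_n)(s)) \in \R^n$, so that $L(v)(s) = \langle v,w(s)\rangle$ for every $v \in T_xN$. The 1-Lipschitz condition gives $|\langle v,w(s)\rangle| \le |v|$ for each $v$ and a.e.\ $s$; letting $v$ run through a countable dense subset of the unit sphere of $T_xN$ yields $|w(s)| \le 1$ for a.e.\ $s \in S$. Writing $\phi := f(x)$ and using the special form of $G_\phi$ from Definition \ref{d:special-metric}(1),
\[
 A \;=\; n \int_S w(s)\, w(s)^T\, d\nu_\phi(s),
\]
which is positive semidefinite with
\[
 \trace A \;=\; n\int_S |w(s)|^2\, d\nu_\phi(s) \;\le\; n,
\]
because $\nu_\phi$ is a probability measure. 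Then AM--GM applied to the eigenvalues of $A$ gives $\det A \le (\trace A/n)^n \le 1$, so $\sqrt{\det A} \le 1$, i.e.\ $d\vol_G f(x) \le d\vol_N(x)$; integrating over $N$ completes the proof.

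The argument is essentially a single linear-algebra inequality, so there is no substantial obstacle. The only mildly delicate point is the measurable representation step, where one must pass from the 1-Lipschitz bound on $L$ as a map into $L^\infty(S)$ to the pointwise bound $|w(s)| \le 1$; the finite dimension of $T_xN$ is essential here, allowing the reduction to countably many scalar constraints. As a sanity check, equality holds when $f = \Phi$ is a special embedding: each $w(s)$ is then the unit vector $\grad\Phi_{\alpha_x^{-1}(s)}(x)$, so $\trace A = n$ and $A = I$ by the computation in Lemma \ref{l:G-isometric}, and the pullback density coincides with the Riemannian density.
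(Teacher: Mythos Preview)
Your proof is correct and follows essentially the same approach as the paper: both fix an orthonormal basis of $T_xN$, use the 1-Lipschitz condition together with a countable dense subset of the unit sphere to obtain the pointwise bound $|w(s)|\le 1$ (the paper writes this as $\sum_i X_i(s)^2\le 1$), deduce $\trace(f^*G(x))\le n$ from the fact that $\nu_\phi$ is a probability measure, and conclude $\det\le 1$ via AM--GM on the eigenvalues. Your presentation in terms of the rank-one integral $A=n\int_S w(s)w(s)^T\,d\nu_\phi$ is a nice way to package the computation, but the argument is the same.
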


\begin{proof}
This follows from the normalization condition
of Definition \ref{d:special-metric}(1), namely
that the scalar product $G_\phi$ at $\phi\in\LL$
has the form
$$
 G_\phi(X,Y) = n\int_S X(s)Y(s)\, d\nu_\phi(s), \qquad X,Y\in\LL,
$$
where $\nu_\phi$ is a probability measure on $S$.

Let $x\in N$ and $\phi=f(x)$. Fix an orthonormal basis
$e_1,\dots,e_n$ in $T_xN$ and define
$X_i=d_x^wf(e_i)$. Since $d_x^wf$ is 1-Lipschitz,
we have
$$
 \|d_x^w f(v)\|_{L^\infty} \le |v|
$$
for any $v\in T_xN$. Substituting $v=\sum v_ie_i$
yields
\begin{equation}
\label{e:vX}
 \|v_1X_1+\dots+v_nX_n\|_{L^\infty} \le \sqrt{v_1^2+\dots + v_n^2}
\end{equation}
for any $v\in\R^n$. This implies that
\begin{equation}
\label{e:Xsquare}
 X_1(s)^2 + \dots + X_n(s)^2 \le 1
\end{equation}
for a.e.\ $s\in S$.
Indeed,  let $A\subset\R^n$ be a countable dense subset of the unit sphere.
Define a map $X\colon S\to\R^n$ by $X(s)=(X_1(s),\dots,X_n(s))$.
By \eqref{e:vX} we have $\langle v, X(s)\rangle\le 1$
for  every $v\in A$ and a.e. $s\in S$.
Observe that 
$$
|X(s)|_{\R^n}=\sup\{  \langle v, X(s)\rangle : v\in A \}
$$
since $A$ is dense in the unit sphere. Therefore
$|X(s)|_{\R^n}\le 1$ for a.e. $s\in S$
and \eqref{e:Xsquare} follows.
Now
$$
 \operatorname{trace} (f^*G(x)) 
 = \sum G_\phi(X_i,X_i) = \sum n \int_S X_i(s)^2\, d\nu_\phi(s)
 \le n \int_S d\nu_\phi(s) =n .
$$
Since $\operatorname{trace} (f^*G(x))\le n$,
we have $\det(f^*G(x))\le 1$, thus the induced
volume density on $T_xN$ is no greater that
the Riemannian volume form.
\end{proof}

\begin{lemma}\label{l-area-jacobian-inequality}
Let $N$ be an $n$-dimensional Riemannian manifold,
and $f:N\to\LL$ a 1-Lipschitz map.
Suppose that $G$ is a special Riemannian metric
in an open set $\U\subset\LL$ containing $f(N)$
and $P:\U\to M$ an $L^2$-smooth map
(cf. Definition \ref{d:L^2-smooth}).
Then
$$
 \vol (P\circ f) \le \int_N J_GP(f(x)) \,d\vol_N(x) .
$$
Here $\vol(P\circ f)$ denotes the ordinary $n$-volume
of a Lipschitz map (or, equivalently, the area of the image
with multiplicities).
\end{lemma}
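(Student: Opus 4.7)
The plan is to combine the previous lemma (which bounds the pull-back quadratic form of a 1-Lipschitz map into $(\LL,G)$) with a chain-rule decomposition of the Jacobian of the composition $P\circ f$, and then integrate using the standard area formula for Lipschitz maps between Riemannian manifolds of the same dimension.

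More precisely, first I would note that since $f$ is Lipschitz and $P$ is $L^2$-smooth (hence locally Lipschitz into $M$), the composition $P\circ f\colon N\to M$ is a Lipschitz map between $n$-dimensional Riemannian manifolds. Therefore the ordinary area formula applies:
$$
\vol(P\circ f)=\int_N J(P\circ f)(x)\,d\vol_N(x),
$$
where $J(P\circ f)(x)$ is the absolute Jacobian of the derivative $d_x(P\circ f)\colon T_xN\to T_{P(f(x))}M$ at a point $x$ of differentiability. By the discussion preceding the previous lemma, $f$ is weakly differentiable at almost every $x\in N$, with $d^w_xf$ being a 1-Lipschitz linear map $T_xN\to\LL$. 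Pairing this with the $L^2$-extension of $d_{f(x)}P$ (whose existence is guaranteed by Definition \ref{d:L^2-smooth}) and using the dominated convergence inherent in the definition of weak differentiability, one verifies the chain rule $d_x(P\circ f)=d_{f(x)}P\circ d^w_xf$ at a.e.\ $x\in N$.

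Next I would bound the Jacobian of this composition. Let $Y\subset\LL$ be the image of $d^w_xf$. If $\dim Y<n$ then $J(P\circ f)(x)=0$ and there is nothing to prove, so assume $\dim Y=n$. Then the multiplicative property of Jacobians through an intermediate Euclidean space (with $Y$ equipped with the restriction of $G_{f(x)}$) gives
$$
J(P\circ f)(x)=J_{Y}(d^w_xf)\cdot J_{G,Y}\bigl(d_{f(x)}P\bigr).
$$
The first factor equals $\sqrt{\det(f^*G(x))}$, which is at most $1$ by the previous lemma (whose proof shows $\operatorname{trace}(f^*G(x))\le n$, hence $\det(f^*G(x))\le 1$). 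The second factor is bounded by $J_GP(f(x))$ by Definition \ref{d:linear-jacobian}. Therefore $J(P\circ f)(x)\le J_GP(f(x))$ at a.e.\ $x$, and integrating over $N$ yields the claim.

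The main obstacle is the chain-rule step: $f$ is only weakly differentiable in the sense of pairings against $L^1(S)$, whereas $d_{f(x)}P$ is initially defined as an $L^\infty$-derivative and only extends to $L^2$ by the $L^2$-smoothness hypothesis. One must check that $d^w_xf$ actually takes values in (or can be post-composed with a map defined on) a space where $d_{f(x)}P$ acts, and that the resulting composition truly represents the differential of $P\circ f$ at $x$. This is handled by combining the density of $L^1\cap L^2$ test elements with the uniform $L^2$-boundedness of the derivatives of $P$ along the bounded set $f(N)$, so that the weak derivative produced by $L^1$ pairings agrees with the strong derivative relevant to computing the Jacobian in $M$.
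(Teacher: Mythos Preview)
Your proposal is correct and follows essentially the same approach as the paper: both establish the chain rule $d_x(P\circ f)=d_{f(x)}P\circ d^w_xf$ from the definitions of weak differentiability and $L^2$-smoothness, then bound the resulting Jacobian by the product of $J_{G_{f(x)}}(d_{f(x)}P)\le J_GP(f(x))$ and the $G$-Jacobian of $d^w_xf$, which is at most $1$ by the previous lemma. Your write-up is in fact more careful than the paper's three-sentence proof, including the explicit area formula, the case $\dim Y<n$, and a discussion of the chain-rule subtlety.
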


\begin{proof}
The definitions of weak derivative and  $L^2$-smoothness
imply that $P$ and $f$ satisfy the chain rule:
$$
 d_x(P\circ f) = d_{f(x)}P\circ d_x^wf
$$
for every $x\in N$ where the weak derivative $d_x^wf$ exists.
It follows that the Jacobian of $d(P\circ f)$ is bounded above
by the product of the Jacobians of $d_{f(x)}P$ and $d_x^wf$
(with respect to $G_{f(x)}$),
and the latter Jacobian does not exceed 1 by the previous lemma.
\end{proof}

\section{The construction}
\label{sec-construction}

Now we return to the case of an almost hyperbolic metric.
Recall that  $g_0$ denotes the standard metric on $\HH^n$,
$o\in\HH^n$ is a fixed origin.
By $B_o(r)$ we denote
the ball of radius $r$ in $\HH^n$ centered at~$o$.
Let $S$ be the ideal boundary of $\HH^n$;
we identify $S$ with $S^{n-1}$ in a natural way
(via the unit tangent space at the origin).

Recall that our metric $g$ is extended from
a region $D\subset B_o(R/5)$ to the entire $\HH^n$
so that  $g\equiv g_0$ outside $B_o(R/2)$ and $g$ is close to $g_0$
on $\HH^n$.
We denote $M=(\HH^n,g)$.

To simplify exposition,
we do not track the dependence on $g$ and its
derivatives in our constructions.
We say that a dependence on $g$ is \textit{smooth}
if for every integer $k>0$ there exists an $r>0$
such that this dependence is $k$ times differentiable
with respect to the $C^r$ norm on the space of metrics
(more precisely, on a neighborhood of $g_0$ in
the space of metrics). 

Since $g$ and $g_0$ coincide outside a compact set,
their boundaries at infinity are canonically identified. 
For every $s\in S$,
let $\Phi_s:M\to\R$ be the Busemann function
of a geodesic ray starting at $o$ towards
$s\in \pd_\infty M=S$. We assume here that $g$ is negatively curved (which is always
the case if it  is close to a hyperbolic metric; also recall that the Busemann 
function of a ray $\gamma$ is 
defined as $\Phi_{\gamma}(x)=\lim_{t \to \infty} (d(x, \gamma(t)-t)$.)
Define a map $\Phi:M\to\LL$
as in Definition \ref{d:special-map}, that is,
$$
 \Phi(x)(s) = \Phi_s(x), \qquad x\in M,\ s\in S.
$$

\begin{lemma}
1. $\Phi_x(s)$ depends smoothly on $x$, $s$ and $g$.

2. If $g$ is sufficiently close to $g_0$,
then $\Phi$ is a special embedding in the sense
of  Definition \ref{d:special-map}.
\end{lemma}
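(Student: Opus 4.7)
The key observation is that since $g\equiv g_0$ outside the compact ball $B_o(R/2)$, every $g$-geodesic ray eventually becomes a hyperbolic ray, so the boundary at infinity of $(\HH^n,g)$ is canonically identified with $S=\partial_\infty\HH^n$ (as the preceding discussion already notes). My plan is to reduce all required smoothness and non-degeneracy to smooth dependence of a compactly perturbed geodesic flow on initial conditions and on the parameter $g$.

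First, since $g$ is close to $g_0$ in a sufficiently strong topology, it is strictly negatively curved, so $(\HH^n,g)$ is a Cartan--Hadamard manifold. Standard Busemann function theory in this setting gives that each $\Phi_s$ is smooth with $|\grad\Phi_s|\equiv 1$ on all of $M$, and whose gradient curves are precisely the unit-speed $g$-rays asymptotic to $s$. This immediately verifies condition~(3) of Definition~\ref{d:special-map}; condition~(1) holds by construction.

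For the joint smoothness in $(x,s,g)$ (which handles part~1 of the lemma and condition~(2) of the definition), I would split according to whether $x$ lies outside or inside $B_o(R/2)$. Outside, both $\Phi_s$ and the hyperbolic Busemann function $\Phi^{g_0}_s$ are Busemann functions of asymptotic $g_0$-rays for the same metric $g_0$, so they differ there by a constant $c(s,g)$; evaluating at the exit point $\gamma_s(T)$ of the $g$-ray from $o$ for any sufficiently large $T$ gives
$$
  c(s,g) \;=\; -T \;-\; \Phi^{g_0}_s(\gamma_s(T)),
$$
and the exit point depends smoothly on $(s,g)$ by smooth dependence of ODE solutions on initial conditions and parameters. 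To push smoothness inside $B_o(R/2)$, I would use the cocycle identity $\Phi_s(x)=\Phi_s(\gamma^s_x(T'))+T'$, where $\gamma^s_x$ is the $g$-ray from $x$ with endpoint $s$ and $T'$ is any exit time; all ingredients again depend smoothly on $(x,s,g)$.

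Finally, for condition~(4) I would identify the map $s\mapsto\grad\Phi_s(x)$ as the inverse of the asymptotic map $A^g_x\colon UT_xM\to S$ sending $v$ to the endpoint at infinity of the $g$-geodesic $\gamma_v$. For $g=g_0$ this is the classical diffeomorphism of hyperbolic geometry, and by smooth dependence of geodesics and exit times on $g$ the perturbed map $A^g_x$ is $C^\infty$-close to $A^{g_0}_x$ and therefore still a diffeomorphism. The one genuinely delicate point is joint smoothness in the ideal-boundary parameter $s$: for a general negatively curved metric this is only H\"older, as it amounts to regularity of the weak stable foliation of an Anosov geodesic flow; but the cutoff condition $g\equiv g_0$ outside a compact set sidesteps that issue by reducing everything to smooth dependence of a compactly supported ODE perturbation.
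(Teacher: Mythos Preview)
Your argument is correct and identifies the crucial point: because $g=g_0$ outside a compact set, all the asymptotic data (endpoint at infinity, Busemann cocycle) can be read off from the \emph{exit data} of $g$-geodesics from the perturbation region, and these depend smoothly on initial conditions and on $g$ by standard ODE theory. You also correctly flag that the smoothness in $s$ is exactly the place where a general negatively-curved argument would break (H\"older regularity of the stable foliation), and that the compact support of the perturbation sidesteps this.

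The paper reaches the same conclusion by a slightly slicker reformulation. Rather than splitting into inside/outside and using the cocycle identity, it observes that for each $s\in S$ there is a fixed $g_0$-horosphere $H_s$ tangent to $\partial B_o(R)$ (lying entirely in the region where $g=g_0$), and that $\Phi_s$ is simply the signed $g$-distance to $H_s$, normalized so that $\Phi_s(o)=0$. Since the family $\{H_s\}$ is a smooth family of hypersurfaces given explicitly by hyperbolic geometry, smoothness of $\Phi_s(x)$ in $(x,s,g)$ reduces in one stroke to smoothness of the distance to a smooth hypersurface. Your route is a bit longer but has the advantage of being completely explicit; the paper's route is shorter but relies on the reader supplying the ``distance-to-a-moving-hypersurface is smooth'' step. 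One minor organizational remark: your argument for the constant $c(s,g)$ uses the $g$-ray $\gamma_s$ from $o$ towards $s$, whose initial direction is $(A^g_o)^{-1}(s)$; so logically you should establish that $A^g_o$ is a diffeomorphism (your condition~(4) argument) \emph{before} invoking $\gamma_s$, or equivalently first parametrize everything by $v\in UT_oM$ and pass to $s$ at the end.
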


\begin{proof}
1. The lemma becomes obvious once one realizes that in our situation 
the Busemann functions, which are usually defined using asymptotic 
constructions, are objects that can be defined in terms of a compact 
region where our metrics may be non-standard. Namely, rather than
using the boundary at infinity, one could use the boundary of the ball $B_o(R)$,
then the Busemann functions
turn into (normalized by their values at $o$) distance function to the
horospheres tangent to this sphere, and these horospheres
are the same for $g$ and $g_0$. After this observation the proof is
straightforward. 

2. The first three requirements of Definition \ref{d:special-map}
trivially follow from the construction and the first part of the lemma.
The last requirement asserting that the map $s\mapsto\grad\Phi_s(x)$ is a diffeomorphism
between $S$ and $UT_xM$ is trivial in the case $g=g_0$,
and then the general case follows from the fact the derivatives
of this map depend continuously on~$g$. 
\end{proof}

We use notations introduced in Section \ref{s:general}
for special maps. 

\begin{lemma}\label{l:lambda-model-case}
If $g=g_0$, then
$$
 \lambda(x,s)=\frac{d\mu_x(s)}{d\mu_o(s)} = e^{-(n-1)\Phi_s(x)}
$$
for all $x\in M$, $s\in S$.
\end{lemma}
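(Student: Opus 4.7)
The plan is to combine Lemma~\ref{l:Ls-is-Laplacian} with the specific value $\Delta\Phi_s\equiv n-1$ in hyperbolic space to obtain an ODE for $\lambda(\cdot,s)$ along the gradient trajectories of $\Phi_s$, and then pin down the constant of integration by a direct computation at $o$ combined with a symmetry argument.

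I would begin with the base case $x=o$. Since $\Phi_s(o)=0$, the formula reduces to $\lambda(o,s)\equiv 1$. This holds because $\alpha_o\colon UT_o\HH^n\to S$, under the standard identification $S^{n-1}\cong UT_o\HH^n$, is the antipodal map: indeed $\alpha_o^{-1}(s)=\grad\Phi_s(o)$ is the unit vector pointing \emph{away} from $s$, and the antipodal involution preserves Haar measure, so $\mu_o=ds$.

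For the ODE step, recall that in $\HH^n$ all horospheres have principal curvatures identically equal to $1$ with respect to the outward normal $\grad\Phi_s$, so $\Delta\Phi_s\equiv n-1$ throughout. Substituting into Lemma~\ref{l:Ls-is-Laplacian} gives the first-order linear ODE $L_s(\grad\Phi_s(x))=-(n-1)\lambda(x,s)$ along the unit-speed gradient trajectories of $\Phi_s$. Integrating, together with the identity $\Phi_s(\gamma(t))=\Phi_s(\gamma(0))+t$, one concludes that the ratio $F(x,s):=\lambda(x,s)\,e^{(n-1)\Phi_s(x)}$ is constant along every such trajectory.

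The main obstacle is passing from this one-parameter constancy to $F\equiv 1$ on all of $M\times S$: for fixed $s$ the gradient trajectory of $\Phi_s$ through $o$ is only a single geodesic, so the ODE together with $F(o,s)=1$ pins $F$ down on only a meagre subset. To overcome this I would work in the upper half-space model, fix $s=\infty$ (so that the horosphere through $o$ centred at $s$ becomes the Euclidean plane $\{h=1\}$), and invoke the subgroup of isometries of $\HH^n$ generated by horizontal translations and rotations about $o$: it preserves both $s$ and the horosphere, acts transitively on the horosphere, and---using $\mu_{\phi(x)}=\phi_*\mu_x$ together with the elementary fact that these isometries act on the boundary $S=\R^{n-1}\cup\{\infty\}$ with trivial Jacobian at the fixed point $\infty$---it leaves $F(\cdot,\infty)$ invariant. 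Hence $F\equiv 1$ on this horosphere, and the previously established constancy along gradient trajectories of $\Phi_s$ (which foliate $\HH^n$ transverse to the horosphere) propagates the equality to all of $\HH^n$ for $s=\infty$. The general $s\in S$ is then handled by conjugating with an element of $O(n)$ fixing $o$ and sending $\infty$ to $s$, since such isometries preserve both $\lambda$ and $\Phi_s$ covariantly.
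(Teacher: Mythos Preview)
Your argument is correct. The paper itself does not give a proof of this lemma; it simply cites Besson--Courtois--Gallot \cite{BCG}, where the formula is established (or follows trivially from what is established) by a direct computation of the visual measure on $\partial\HH^n$---essentially the Poisson kernel formula $\frac{d\mu_x}{d\mu_o}(s)=P(x,s)^{n-1}$ with $P(x,s)=e^{-\Phi_s(x)}$, which one reads off in the ball or upper half-space model.

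Your route is genuinely different and more in the spirit of the paper's internal machinery: you derive the formula from Lemma~\ref{l:Ls-is-Laplacian} (the Liouville-measure identity) together with $\Delta\Phi_s\equiv n-1$, obtaining the ODE along gradient lines, and then use the large isometry group of $\HH^n$ to propagate the initial condition from $o$ to the whole space. This is a nice illustration of how Lemma~\ref{l:Ls-is-Laplacian} encodes the classical density formula. The cost is that the symmetry step is somewhat delicate (you have to check that parabolic boundary maps have conformal factor~$1$ at their fixed point---true, but not entirely trivial), whereas the BCG computation is a one-line Jacobian in a model. Two minor remarks: (i) for transitivity on the horosphere the horizontal translations alone suffice, so the ``rotations about $o$'' are not needed; (ii) Lemma~\ref{l:Ls-is-Laplacian} appears inside the proof of Proposition~\ref{p:mean-curvature-zero}, but its proof is self-contained and does not use the present lemma, so there is no circularity.
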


\begin{proof}
This straightforward statement can be found in \cite{BCG}.
\end{proof}

Denote by $\B=\B(R)$ the ball of radius $R$ in $\LL$
with respect to the $L^\infty$ norm.

\begin{definition}\label{d:P}
We define a ``projection'' $P:\U\to M$,
where $\U\subset\LL$ is a neighborhood of $\B\cup\Phi(M)$,
as follows.
For every $\phi\in\LL$, introduce a vector field $W_\phi$ on $M$
by
$$
 W_\phi(x) = \int_S e^{n(\Phi_s(x)-\phi(s))} \grad\Phi_s(x)\, d\mu_x(s)
$$ 
and 
let $P(\phi)$ be a point $x\in M$
such that
$$
  W_\phi(x)=0.
$$
The existence of such a smooth map $P$ is shown in Lemma \ref{l:P-basic} below.

\end{definition}

In the case $g=g_0$, the equation says that $x$ is a critical point of the function
$$
F_\phi(x)=\int_S e^{-n\phi(s)}e^{\Phi_s(x)}ds,
$$
as follows from computations below. It is easy to see that $F_\phi$ is convex 
(and actually strictly quadratically convex)
and grows to infinity as $x \to \infty$, and hence it has only one critical point
where the minimum of $F_\phi$ is attained.

\begin{lemma}\label{l:P-basic}
If $g$ is sufficiently close to $g_0$, then there exists a smooth map $P$ 
satisfying Definition  \ref{d:P}  such that

1. $P$ is defined in
a neighborhood of $\Phi(M)$ containing $\B$;

2. $P(\Phi(x))=x$ for all $x\in M$;

3. $P(\B)\subset B_o(R_1)$ for some radius $R_1$
depending only on $n$ and~$R$.
\end{lemma}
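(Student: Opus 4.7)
The plan is to first carry out the construction in the model case $g=g_0$, where the equation $W_\phi(x)=0$ can be identified as the Euler--Lagrange equation of an explicit strictly convex functional on $M$, and then extend to metrics $g$ near $g_0$ by a perturbation/implicit function theorem argument. In the hyperbolic case, Lemma \ref{l:lambda-model-case} gives $\lambda(x,s)=e^{-(n-1)\Phi_s(x)}$, so the exponents in $W_\phi$ collapse and one recognizes
$$
W_\phi(x)=\int_S e^{\Phi_s(x)-n\phi(s)}\grad\Phi_s(x)\,ds = \grad F_\phi(x),\qquad F_\phi(x)=\int_S e^{\Phi_s(x)-n\phi(s)}\,ds.
$$
To analyze the critical points, I will use the standard identity $D^2\Phi_s=g_0-d\Phi_s\otimes d\Phi_s$ for Busemann functions in $\HH^n$ (horospheres have shape operator $\pm\mathrm{Id}$). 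Combined with $|\grad\Phi_s|\equiv 1$, the Hessian of $e^{\Phi_s}$ collapses to $e^{\Phi_s}\cdot g_0$, so $D^2 F_\phi = F_\phi\cdot g_0$. Thus $F_\phi$ is quadratically strictly convex and has a unique critical point, which will be $P(\phi)$; along any unit-speed geodesic from this minimum the restriction $f(t)=F_\phi(\gamma(t))$ satisfies $f''=f$, giving $f(t)=f(0)\cosh t$ and in particular coercivity.

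Next I verify the two identities required of $P$ in the hyperbolic case. For $P\circ\Phi=\mathrm{id}_M$, at $\phi=\Phi(x_0)$ the integrand simplifies and
$$
W_{\Phi(x_0)}(x_0)=\int_S \grad\Phi_s(x_0)\,d\mu_{x_0}(s)=\int_{UT_{x_0}M}v\,dv=0
$$
by the antipodal symmetry of the sphere, using Notation~\ref{notation-alpha} that $\mu_{x_0}$ is the push-forward of the Haar measure on $UT_{x_0}M$ under $\alpha_{x_0}$. Uniqueness of the critical point then forces $P(\Phi(x_0))=x_0$. For the bound $P(\B)\subset B_o(R_1)$, evaluate at the origin: $F_\phi(o)=\int_S e^{-n\phi(s)}\,ds\le e^{nR}$ since $\Phi_s(o)\equiv 0$. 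On the other hand, $F_\phi\ge e^{-nR}F_0$ with $F_0(x)=\int_S e^{\Phi_s(x)}\,ds$, and the ODE $f''=f$ applied to $F_0$ along geodesics through $o$ (where $\grad F_0(o)=0$ by symmetry and $F_0(o)=1$) yields $F_0(x)=\cosh d(o,x)$. Comparing $F_\phi(P(\phi))\le F_\phi(o)$ with the lower bound at $P(\phi)$ gives $\cosh d(o,P(\phi))\le e^{2nR}$, so $R_1=\cosh^{-1}(e^{2nR})$ depends only on $n$ and $R$.

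Finally, I pass to $g$ close to $g_0$ by a quantitative implicit function theorem. The vector field $W_\phi^g$ depends smoothly on $(\phi,g)$, and at $g=g_0$ its derivative in $x$ at a zero equals $F_\phi\cdot g_0$, hence is invertible with operator norm of the inverse uniformly controlled on any bounded ball of $\phi$'s. Consequently, for $g$ sufficiently close to $g_0$, each $\phi$ in a neighborhood of $\B\cup\Phi^{g_0}(M)$ admits a unique solution of $W_\phi^g(x)=0$ near $P^{g_0}(\phi)$, depending smoothly on $(\phi,g)$; this is $P^g(\phi)$. The identity $P^g\circ\Phi^g=\mathrm{id}_M$ holds exactly, not approximately, because the antipodal-symmetry computation above uses only that $\mu_x^g$ is the push-forward of the Haar measure on $UT_x^gM$, which is true for arbitrary $g$. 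The inclusion $P^g(\B)\subset B_o(R_1)$ survives by continuity (enlarging $R_1$ slightly if desired). The main obstacle is ensuring these local solutions glue into a single smooth map on a neighborhood of the non-compact set $\B\cup\Phi(M)$: for this one exploits that $g\equiv g_0$ outside $B_o(R/2)$, so that outside a compact region $\Phi^g$ and $W_\phi^g$ coincide with their hyperbolic counterparts and the uniform non-degeneracy of $D^2F_\phi$ carries over without any extra work; the remaining compact region is handled by the standard uniform version of the implicit function theorem.
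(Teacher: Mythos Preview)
Your proof is correct and follows essentially the same route as the paper: both reduce the hyperbolic case to the strict convexity of $F_\phi$ via the identity $D^2 e^{\Phi_s}=e^{\Phi_s}g_0$, verify $P\circ\Phi=\mathrm{id}$ by the antipodal symmetry $\int_{UT_xM}v\,dv=0$ (valid for every $g$, exactly as you note), and then pass to nearby metrics by the implicit function theorem with uniform control of $(\partial_x W_\phi)^{-1}$. The only organizational difference is that the paper first substitutes $\psi=e^{-n\phi}\in L^2(S)$ so that the defining equation becomes linear in $\psi$ before invoking the IFT, whereas you work directly with $\phi$; your explicit bound $R_1=\cosh^{-1}(e^{2nR})$ obtained from the ODE $f''=f$ along geodesics is a nice quantitative addition that the paper does not spell out.
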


\begin{remark}
We do not yet claim that $P$ is $L^2$-smooth
(in the sense of Definition \ref{d:L^2-smooth}).
This will be shown later. 
\end{remark}

\begin{proof}[Proof of Lemma \ref{l:P-basic}]
Substituting $s=\alpha(v)$, $v\in UT_xM$,
where $\alpha(v)$ is defined in Notation~\ref{notation-alpha},
we rewrite the equation $W_\phi(x)=0$ as
$$
 W_\phi(x) = \int_{UT_xM}  e^{n(\Phi_{\alpha(v)}(x)-\phi(\alpha(v)))} v\,dv = 0 .
$$
This equation is trivially satisfied for $\phi=\Phi(x)$,
and so we define $P$ on $\Phi(M)$ by $P(\Phi(x))=x$
to satisfy the second assertion.
This definition is non-ambiguous  since $\Phi$ is
distance-preserving and hence injective.
In the rest of the proof we extend $P$
to a neighborhood of  $\Phi(M)$ containing $\B$,
using a version of the implicit function theorem.

Consider a map $E:\LL\to L^2(S)$ given by
$$
 E(\phi)(s) = e^{-n\phi(s)} .
$$
Note that the set $E(\B)$ is compact in $L^2$.
Let $\phi\in\LL$ and $\psi=E(\phi)$.
Then the equation $W_\phi(x)=0$
takes the form
\be\label{e:Ppsi}
 \int_S \psi(s) e^{n\Phi_s(x)} \grad\Phi_s(x)\,d\mu_x(s) = 0 .
\ee
We are going to prove that there exists a smooth retraction $\tilde P\colon\tilde\U\to E(\Phi(M))$
defined  in a neighborhood $\tilde U$ 
of the set $E(\B)\cup E(\Phi(M))$ in $L^2(S)$
and satisfying the equation \eqref{e:Ppsi} for $x=\tilde P(\psi)$.
The lemma follows from the existence of such $\tilde P$.
Indeed,  $P:=(E\circ\Phi)^{-1}\circ\tilde P\circ E$ satisfies
Definition \ref{d:P} and  assertions 1--3 of the lemma.

Since the original equation $W_\phi(x)=0$
is satisfied for $\phi=\Phi(x)$, $x\in M$, the new equation \eqref{e:Ppsi}
is satisfied for $\psi=E(\Phi(x))$. Therefore we define $\tilde P$
on $E(\Phi(M))$ to be the identity map and this is consistent with \eqref{e:Ppsi}.
We extent $\tilde P$ from $E(\Phi(M))$ in two steps 
using the implicit function theorem.

First we extend it to a neighborhood of $E(\Phi(M))$ in $L^2(S)$.
For this step it suffices to verify that the derivative of \eqref{e:Ppsi}
with respect to $x$ is non-degenerate at every point $(x,\psi)\in M\times L^2(S)$
such that $\psi=E(\Phi(x))$. In order to do this, we rewrite the equation
as follows.
Since \eqref{e:Ppsi} is linear in $\psi$ and is satisfied
for $\psi=E(\Phi(x))$, it is equivalent to
$$
 \int_S (\psi(s)-E\circ\Phi(x)(s)) e^{n\Phi_s(x)} \grad\Phi_s(x)\,d\mu_x(s) = 0
$$
or, passing to the co-tangent bundle,
$$
 \int_S (\psi(s)-E\circ\Phi(x)(s)) e^{2n\Phi_s(x)} d_x(E\circ\Phi)(s)\,d\mu_x(s) = 0 .
$$
This can be interpreted as follows: the vector $\psi-E\circ\Phi(x)\in L^2(S)$ belongs to the
orthogonal complement to the tangent space $T_x(E\circ\Phi)$ of the surface $E\circ\Phi$
with respect to the scalar product defined by a measure
$e^{2n\Phi_s(x)}d\mu_x(s)$ depending on~$x$. 
Since $E\circ\Phi\colon M\to L^2(S)$ is a smooth embedding and
the scalar products depend smoothly on $x$, 
the assumption of the implicit function theorem is obviously satisfied.
Thus the equation \eqref{e:Ppsi} uniquely defines a smooth retraction
$\tilde P$ in a neighborhood of $E(\Phi(M))$.
Since the equations depends smoothly on $g$, so does $\tilde P$
(again, by the implicit function theorem, now applied in the product
of $M$, $L^2(S)$ and a suitable space of Riemannian metrics on $M$).

The second step is to extend $\tilde P$ to a larger neighborhood of $E(\Phi(M))$
containing  $E(\B)$. Here we use the assumption that $g$ is a small perturbation
of $g_0$. Since $E(\B)$ is compact in $L^2(S)$ and the equation \eqref{e:Ppsi}
depends smoothly on $x$, $\psi$ and $g$, it suffices to verify the assumption
of the implicit function theorem only in the case $g=g_0$.
More precisely, we verify that if $g=g_0$ then for every $\psi\in E(\B)$ the equation \eqref{e:Ppsi}
has a unique solution $x\in M=\HH^n$ and that at this point $x$
the derivative of \eqref{e:Ppsi} with respect to $x$ is non-degenerate.
Then for $g\approx g_0$ the existence of $\tilde P$ and its smooth dependence on $g$
follow by the implicit function theorem in the space 
$M\times L^2(S)\times \{\text{metrics on $M$}\}$.

From now on we consider
the case $g=g_0$. 
Let $\psi\in E(\B)$ and $x\in M$.
Substituting the formula for the density of $\mu_x$
(Lemma \ref{l:lambda-model-case}) into \eqref{e:Ppsi} yields
$$
 \int_S \psi(s) e^{\Phi_s(x)} \grad\Phi_s(x)\,ds = 0,
$$
or, passing to the co-tangent bundle,
\be\label{e:Ppsi2}
 \int_S \psi(s)\, d_x e^{\Phi_s} \,ds= 0 .
\ee
This equation means that $x$ is a critical point of the function
$F_\psi\in C^\infty(\HH^n)$ defined by
$$
 F_\psi = \int_S \psi(s) e^{\Phi_s}ds .
$$
Now it suffices to show that $F_\psi$ has a unique
critical point which is non-degerate.

Observe that 
\begin{equation}
\label{e:D2Phi-conformal}
D^2(e^{\Phi_s}) = e^{\Phi_s}g
\end{equation}
Indeed, since the second
derivative of $e^{\Phi_s}$  along the ``radial'' direction from $s$ is 
that of $e^t$ at $t=\Phi_s(x)$, and in
orthogonal directions it is equal to the curvature of horospheres.

The identity \eqref{e:D2Phi-conformal} means that the functions $e^{\Phi_s}$
and hence $F_\psi$ belong to the subspace
$Z\subset C^\infty(\HH^n)$  of
all functions $u\in C^\infty(\HH^n)$ satisfying the
second order differential equation $D^2_g u = ug$.
Since $\psi\in E(\LL)$,  $\psi$ is an exponential of a bounded function
and hence is positive and separated away from zero.
This and the definition of $F_\psi$ easily imply that
$F_\psi$ is positive and grows to infinity at the ideal boundary of~$\HH^n$.
The positivity of $\psi$ and the equation  $D^2_g F_\psi = F_\psi g$
imply that $F_\psi$ is strictly convex.
Therefore $F_\psi$ has a unique critical point which
is its non-degenerate global minimum.
The lemma follows.
%
%
\end{proof}

\begin{remark}
To help the reader visualize the map $P$ in case $g=g_0$,
we make the following observations.
In the notation from the proof above,
$\dim Z=n+1$ and $\HH^n$ naturally
embeds into $Z$ by the map $x\mapsto F_\psi$ where $\psi=E(\Phi(x))$.
The image $Q$ of this embedding
is a connected component of a hyperboloid of signature $(n,1)$ in $Z$
(and actually is equivalent to the standard hyperboloid model
of $\HH^n$ in $\R^{n,1}$).
Positive functions in $Z$ form a cone over a sheet of this hyperboloid,
and the map $P:\LL\to\HH^n$ can be defined as follows:
First, the argument $\phi$ is transformed into $F_\psi=F_{E(\phi)}\in Z$
(moreover $F_\psi$ lies in the positive cone),
then the resulting point $F_\psi$ is projected radially to $Q$,
and finally this point of $Q$ is mapped back to~$\HH^n$.
\end{remark}

From now on, we reserve the notation $P$ for the map constructed above.
Our next goal is to compute the derivative of $P$.
This is done in Lemma~\ref{l:dP}.

\begin{notation}
For every $x\in M$ and $s\in S$, define a linear operator 
$A_{x,s}:T_xM\to T_xM$ by
\be\label{e:def-Axs}
 A_{x,s}(\xi) = e^{-n\Phi_s(x)} \lambda(x,s)^{-1} \nabla_\xi T_s
\ee
where $T_s$ is a vector field on $M$ given by
\be
  T_s(x) = e^{n\Phi_s(x)} \lambda(x,s) \grad\Phi_s(x) 
\ee
and $\nabla_\xi$ denotes the Levi-Civita derivative along $\xi$.

Let $\phi\in\LL$ and $x=P(\phi)$. Define a function
$\rho_\phi$ on $S$ by
\be\label{e:def-rho}
 \rho_\phi(s) = e^{n(\Phi_s(x)-\phi(s))}
\ee
and let $\ov\rho_\phi$ be the same function normalized
with respect to the measure $\mu_x$:
\be\label{e:def-ovrho}
 \ov\rho_\phi = \frac{\rho}{\int_S \rho\,d\mu_x} .
\ee

Now define a linear operator $A_\phi:T_xM\to T_xM$ by
\be\label{e:def-Aphi}
 A_\phi = \int_S \ov\rho_\phi(s) A_{x,s}\, d\mu_x(s) .
\ee
\end{notation}

\begin{lemma}\label{l:A-model}
If $g=g_0$, then $A_{x,s}=id_{T_xM}$ for all $x\in M$, $s\in S$,
and therefore $A_\phi=id_{T_xM}$ for all $\phi\in\LL$ such that $P(\phi)=x$. 
\end{lemma}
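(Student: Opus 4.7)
The plan is to reduce both claims to a direct computation using the special form of the density $\lambda$ in the hyperbolic case (Lemma \ref{l:lambda-model-case}) together with the key identity \eqref{e:D2Phi-conformal} that $D^2(e^{\Phi_s}) = e^{\Phi_s}g$ on $\HH^n$.

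First I would rewrite the vector field $T_s$. Substituting $\lambda(x,s) = e^{-(n-1)\Phi_s(x)}$ into the definition of $T_s$ yields
\[
 T_s(x) = e^{n\Phi_s(x)}\cdot e^{-(n-1)\Phi_s(x)}\cdot\grad\Phi_s(x) = e^{\Phi_s(x)}\grad\Phi_s(x) = \grad\bigl(e^{\Phi_s}\bigr)(x),
\]
so $T_s$ is the gradient of $e^{\Phi_s}$.  Consequently $\nabla_\xi T_s$ is the Hessian vector of $e^{\Phi_s}$, i.e.\ $g(\nabla_\xi T_s,Y)=D^2(e^{\Phi_s})(\xi,Y)$. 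By the conformal identity \eqref{e:D2Phi-conformal}, $D^2(e^{\Phi_s})(\xi,Y)=e^{\Phi_s}g(\xi,Y)$, so $\nabla_\xi T_s = e^{\Phi_s(x)}\xi$.

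Plugging this into the definition \eqref{e:def-Axs} of $A_{x,s}$ gives the pointwise identity
\[
 A_{x,s}(\xi) = e^{-n\Phi_s(x)}\cdot e^{(n-1)\Phi_s(x)}\cdot e^{\Phi_s(x)}\xi = \xi,
\]
which proves the first assertion. The second assertion is then immediate from \eqref{e:def-Aphi}: once every $A_{x,s}$ equals the identity on $T_xM$, the average
\[
 A_\phi = \int_S \ov\rho_\phi(s)\,A_{x,s}\,d\mu_x(s) = \mathrm{id}_{T_xM}\cdot\int_S\ov\rho_\phi\,d\mu_x = \mathrm{id}_{T_xM}
\]
since $\ov\rho_\phi$ is by construction normalized so that $\int_S\ov\rho_\phi\,d\mu_x=1$ (cf.\ \eqref{e:def-ovrho}).

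There is no serious obstacle here; the lemma is essentially a bookkeeping check that the exponents have been chosen so that all the $\Phi_s$-factors collapse. The one conceptual point worth emphasizing is that the identity \eqref{e:D2Phi-conformal} — which expresses the fact that the horospheres in $\HH^n$ are totally umbilic with shape operator equal to the identity — is precisely what makes $\nabla_\xi T_s$ isotropic in $\xi$, and this isotropy is the algebraic reason the computation works out cleanly.
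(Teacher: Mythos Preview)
Your proof is correct and follows essentially the same approach as the paper's: both use Lemma~\ref{l:lambda-model-case} to simplify $T_s$ to $\grad(e^{\Phi_s})$, then apply the Hessian identity \eqref{e:D2Phi-conformal} to get $\nabla_\xi T_s = e^{\Phi_s}\xi$, and conclude by the normalization $\int_S\ov\rho_\phi\,d\mu_x=1$. The only difference is cosmetic --- the paper substitutes directly into $A_{x,s}$ before simplifying, whereas you first simplify $T_s$ --- but the computation and the ingredients are identical.
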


\begin{proof}
Substituting $\lambda(x,s)=e^{-(n-1)\Phi_s(x)}$ (cf.\ Lemma \ref{l:lambda-model-case})
into the definition of $A_{x,s}$ yields
$$
 A_{x,s}(\xi) = e^{-\Phi_s(x)} \nabla_\xi (e^{\Phi_s}\grad\Phi_s)
 =  e^{-\Phi_s(x)} \nabla_\xi \grad (e^{\Phi_s}) .
$$
From \eqref{e:D2Phi-conformal} we have
$ \nabla_\xi \grad (e^{\Phi_s})  = e^{\Phi_s}\xi$,
hence $A_{x,s}(\xi) = \xi$
and the first assertion follows.
Then the second assertion follows from
the fact that $\int_S\ov\rho_\phi\,d\mu_x=1$.
\end{proof}

Note that $A_\phi$ is invertible if $g$ is close to $g_0$ 
since it depends smoothly on $g$ and is
invertible in the case $g=g_0$ (by Lemma \ref{l:A-model}).

\begin{lemma}\label{l:dP}
Let $\phi\in\B$, $x=P(\phi)$ and $\delta\in T_\phi\LL\simeq\LL$.
Then
$$
 d_\phi P(\delta) = A_\phi^{-1}\circ E_\phi(\delta)
$$
where the linear map $E_\phi:\LL\to T_xM$ is given by
\be\label{e:defE}
 E_\phi(\delta) =  n \int_S \delta(s) \ov\rho_\phi(s) \grad\Phi_s(x)\,d\mu_x(s)
\ee
\end{lemma}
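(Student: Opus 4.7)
The plan is to apply implicit differentiation to the defining relation $W_\phi(P(\phi)) = 0$ from Definition \ref{d:P}. The key preliminary observation is that the definition of $T_s$ lets us rewrite
$$
 W_\phi(x) = \int_S e^{n(\Phi_s(x)-\phi(s))}\grad\Phi_s(x)\,d\mu_x(s)
 = \int_S e^{-n\phi(s)} T_s(x)\,ds,
$$
which encapsulates all the $x$-dependence of the integrand in the single smooth vector field $T_s$. Since $\phi\in\B$ and $P$ is smooth in a neighborhood of $\B$ by Lemma \ref{l:P-basic}, we may differentiate the identity $W_\phi(P(\phi)) = 0$ in the direction $\delta$: if $\xi = d_\phi P(\delta) \in T_xM$ with $x=P(\phi)$, then setting the total covariant derivative of $W_{\phi+t\delta}(P(\phi+t\delta))$ at $t=0$ to zero yields
$$
 \nabla_\xi W_\phi + \partial_\delta W_\phi = 0.
$$

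The computation of the two terms is now immediate from the form above. For the $\phi$-derivative, only the factor $e^{-n\phi(s)}$ depends on $\phi$, so
$$
 \partial_\delta W_\phi = -n\int_S \delta(s)e^{-n\phi(s)}T_s(x)\,ds
 = -n\int_S \delta(s)\rho_\phi(s)\grad\Phi_s(x)\,d\mu_x(s),
$$
using $e^{-n\phi(s)}T_s(x) = \rho_\phi(s)\lambda(x,s)\grad\Phi_s(x)$ and $d\mu_x(s) = \lambda(x,s)\,ds$. For the covariant derivative along $\xi$, the coefficient $e^{-n\phi(s)}$ is $x$-independent, so $\nabla_\xi W_\phi = \int_S e^{-n\phi(s)}\nabla_\xi T_s\,ds$. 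Using the definition \eqref{e:def-Axs} to substitute $\nabla_\xi T_s = e^{n\Phi_s(x)}\lambda(x,s)A_{x,s}(\xi)$ gives
$$
 \nabla_\xi W_\phi = \int_S \rho_\phi(s) A_{x,s}(\xi)\,d\mu_x(s).
$$

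Combining the two expressions produces
$$
 \int_S \rho_\phi(s)A_{x,s}(\xi)\,d\mu_x(s)
 = n\int_S \delta(s)\rho_\phi(s)\grad\Phi_s(x)\,d\mu_x(s),
$$
and dividing through by the positive constant $\int_S \rho_\phi\,d\mu_x$ replaces $\rho_\phi$ by $\ov\rho_\phi$ on both sides. The left-hand side is then $A_\phi(\xi)$ by \eqref{e:def-Aphi} and the right-hand side is $E_\phi(\delta)$ by \eqref{e:defE}, so $A_\phi(\xi) = E_\phi(\delta)$. Since $A_\phi$ is invertible (Lemma \ref{l:A-model} plus continuous dependence on $g$), we conclude $\xi = A_\phi^{-1}\circ E_\phi(\delta)$, as claimed. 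There is no serious obstacle; the only point requiring care is keeping track of which quantities depend on $x$ versus on $\phi$, which the $T_s$-rewrite handles cleanly.
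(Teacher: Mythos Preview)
Your proof is correct and follows essentially the same route as the paper: implicit differentiation of $W_\phi(P(\phi))=0$, the rewrite $W_\phi(x)=\int_S e^{-n\phi(s)}T_s(x)\,ds$ to isolate the $x$-dependence, and then identification of the two terms with $A_\phi(\xi)$ and $E_\phi(\delta)$ after normalizing by $\int_S\rho_\phi\,d\mu_x$. The only cosmetic difference is that you perform the $T_s$ rewrite at the outset rather than midway through.
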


\begin{proof}
Let $\xi=d_\phi(\delta)$. Differentiating the definition of $P$ yields
$$
 D_\phi (W_\phi(x))(\delta) + D_x(W_\phi(x))(\xi) = 0
$$
where $D_\phi$ and $D_x$ are the derivatives with respect
to the variables $\phi$ and $x$ where the latter derivative
is computed in normal coordinates
centered at $x$. (In fact, the second term does not depend on the choice
of local coordinates since the vector field vanishes at~$x$.)
Equivalently,
\be\label{e:implicit-aux}
D_\phi(W_\phi(x))(\delta) + \nabla_\xi W_\phi = 0 .
\ee
Substituting the definition of $W_\phi$ yields
$$
\begin{aligned}
 D_\phi(W_\phi(x))(\delta)
 &= -n \int_S \delta(s)  e^{n(\Phi_s(x)-\phi(s))} \grad\Phi_s(x)\,d\mu_x(s) \\
 &= -n \int_S \delta(s) \rho_\phi(s) \grad\Phi_s(x)\,d\mu_x(s) 
\end{aligned}
$$
where the second identity follows from the definition of $\rho$, cf.~\eqref{e:def-rho}.
Observe that
$$
\begin{aligned}
 W_\phi(x)
  &= \int_S  e^{n(\Phi_s(x)-\phi(s))} \grad\Phi_s(x) \lambda(x,s)\,ds \\
  &= \int_S e^{-n\phi(x)} T_s(x) \,ds,
\end{aligned}
$$
hence the second term of \eqref{e:implicit-aux} takes the form
$$
\begin{aligned}
  \nabla_\xi W_\phi &= \int_S e^{-n\phi(s)} \nabla_\xi T_s(x) \,ds
   =  \int_S e^{-n\phi(s)} \lambda(x,s)^{-1}\nabla_\xi T_s(x) \,d\mu_x \\
   &= \int_S e^{n(\Phi_s(x)-\phi(s))} A_{x,s}(\xi)\,d\mu_x
   =  \int_S \rho_\phi(s)  A_{x,s}(\xi)\,d\mu_x.
\end{aligned}
$$
Now  \eqref{e:implicit-aux} takes the form
$$
-n \int_S \delta(s) \rho_\phi(s) \grad\Phi_s(x)\,d\mu_x(s) 
+ \int_S \rho_\phi(s)  A_{x,s}(\xi)\,d\mu_x = 0,
$$
or, equivalently,
$$
n \int_S \delta(s) \ov\rho_\phi(s) \grad\Phi_s(x)\,d\mu_x(s) 
=  \int_S \ov\rho_\phi(s)  A_{x,s}(\xi)\,d\mu_x = A_\phi(\xi) .
$$
Here we divided by the normalizing constant $\int_S\rho\,d\mu_x$
and substituted the definition of $A_\phi$, cf.~\eqref{e:def-Aphi}.
Hence
$$
 \xi = A_\phi^{-1}\left( n \int_S \delta(s) \ov\rho_\phi(s) \grad\Phi_s(x)\,d\mu_x(s)   \right)
$$
and the assertion follows.
\end{proof}

\begin{definition}\label{d:G}
We introduce a Riemannian metric $G$ on $\B$ as follows:
for every $\phi\in\B$, the scalar product $G_\phi$ on $T_\phi\LL=\LL$
is defined by
\be
 G_\phi(X,Y) = n\int_S X(s)Y(s)\ov\rho_\phi(s) \,d\mu_x(s), \qquad X,Y\in\LL,
\ee
where $x=P(\phi)$.
\end{definition}

\begin{lemma}
1. $G$ is a special metric (cf.\ Definition~\ref{d:special-metric})
with respect to $\Phi$.

2. $P$ is a projection (with respect to $\Phi$ and $G$)
in the sense of Definition \ref{d:projection}.
\end{lemma}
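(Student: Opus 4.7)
The plan is to check, condition by condition, the three axioms of Definition \ref{d:special-metric} for part 1 and the three requirements of Definition \ref{d:projection} (including the $L^2$-smoothness deferred in the remark after Lemma \ref{l:P-basic}) for part 2. Essentially everything needed has already been packaged in the explicit formulas for $\rho_\phi$, $\ov\rho_\phi$, $A_\phi$, and $d_\phi P$, so the proof amounts to unpacking them at the right points.

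For part 1, I identify the measure $\nu_\phi$ produced by Definition \ref{d:G} as $d\nu_\phi = \ov\rho_\phi\, d\mu_x$ with $x=P(\phi)$. Condition (1) (probability measure) is immediate from the normalization in \eqref{e:def-ovrho}. Condition (2) (smooth positive density bounded away from zero) follows from smoothness of $P$ (Lemma \ref{l:P-basic}), positivity and smoothness of $\lambda$, and uniform boundedness of $\Phi_s$ on the compact image $P(\B)\subset B_o(R_1)$, which forces $\rho_\phi$ to lie between uniform positive bounds. Condition (3) drops out of the tautological cancellation $\rho_{\Phi(x)}\equiv 1$ from \eqref{e:def-rho}, giving $\ov\rho_{\Phi(x)}\equiv 1$ and hence $\nu_{\Phi(x)}=\mu_x$.

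For part 2, requirement (1) of Definition \ref{d:projection} is exactly Lemma \ref{l:P-basic}(2). For $L^2$-smoothness I would invoke Lemma \ref{l:dP} and observe that $E_\phi:\LL\to T_{P(\phi)}M$ has finite rank: relative to an orthonormal frame of $T_{P(\phi)}M$ it has the form $\delta\mapsto\bigl(\int_S \delta\, a_i\, ds\bigr)_{i=1}^n$ for explicit functions $a_i(\cdot,\phi)\in L^2(S)$ (built from $\ov\rho_\phi$, $\lambda$, and $\grad\Phi_s$) depending smoothly on $\phi$. Hence $E_\phi$ extends to a bounded operator on $L^2(S)$ smoothly in $\phi$, and so does $d_\phi P = A_\phi^{-1}\circ E_\phi$ since $A_\phi$ is invertible for $g$ close to $g_0$ by Lemma \ref{l:A-model}. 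For requirement (2), at $\phi=\Phi(x)$ the factor $\ov\rho_\phi$ is identically $1$, so
$$
 E_{\Phi(x)}(V) = n\int_S V(s)\,\grad\Phi_s(x)\,d\mu_x(s);
$$
pairing with any $\xi\in T_xM$ and using \eqref{e:dPhi} reproduces exactly $G_{\Phi(x)}(V, d_x\Phi(\xi))$, which vanishes whenever $V$ is $G_{\Phi(x)}$-orthogonal to $T_x\Phi$. Hence $d_{\Phi(x)}P(V)=0$.

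The only step with nontrivial content is the $L^2$-smoothness verification, which is precisely the point postponed after Lemma \ref{l:P-basic}. Its resolution is clean: because $d_\phi P$ lands in the finite-dimensional space $T_{P(\phi)}M$, smoothness of the operator-valued map reduces to smooth $\phi$-dependence of the $n$ explicit $L^2$-functions $a_i(\cdot,\phi)$, so I do not anticipate any further obstacle.
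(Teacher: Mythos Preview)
Your proposal is correct and follows essentially the same route as the paper: the paper also reduces part~1 to the identity $\rho_{\Phi(x)}\equiv 1$ (hence $\ov\rho_{\Phi(x)}\equiv 1$, giving $\nu_{\Phi(x)}=\mu_x$), and for part~2 it likewise observes that $\langle E_{\Phi(x)}(V),\xi\rangle = G_{\Phi(x)}(V,d_x\Phi(\xi))$ to kill $E_{\Phi(x)}(V)$ on the normal space, with $L^2$-smoothness read off from the explicit formula $d_\phi P = A_\phi^{-1}\circ E_\phi$ in Lemma~\ref{l:dP}. Your treatment of the $L^2$-smoothness is in fact slightly more explicit than the paper's one-line dismissal, but the underlying idea is identical.
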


\begin{proof}
1. The first two requirements of Definition \ref{d:special-metric}
follow immediately.
To verify the third requirement, recall that $P \circ \Phi=id_M$
and let $\phi=\Phi(x)$ where $x\in M$.
Then $P(\phi)=x$ and from \eqref{e:def-rho} we have
$$
 \rho_\phi(s) = e^{n(\Phi_s(x)-\phi(s))} = e^{n(\Phi_s(x)-\Phi(x)(s))} = e^0 = 1 .
$$
Therefore $\ov\rho_\phi\equiv 1$ and the assertion follows.

2. The fact that $P$ is $L^2$-smooth (cf.\ Definition \ref{d:L^2-smooth})
follows from its ordinary smoothness (cf.\ Lemma \ref{l:P-basic})
and the explicit formula for its derivative (cf.\ Lemma \ref{l:dP}).
The first requirement of Definition \ref{d:projection} follows from Lemma  \ref{l:P-basic}.
To verify the second one, consider $x\in M$, $\phi=\Phi(x)$ and let $\delta\in\LL$
be orthogonal to $T_x\Phi$ with respect to $G_\phi$. By Lemma \ref{l:dP}
it suffices to verify that $E_\phi(\delta)=0$. By \eqref{e:defE} we have
$$
  E_\phi(\delta) =  n \int_S \delta(s) \ov\rho_\phi(s) \grad\Phi_s(x)\,d\mu_x(s)
   = n \int_S \delta(s) \grad\Phi_s(x)\,d\mu_x(s)
$$
since $\ov\rho_\phi\equiv 1$ for $\phi=\Phi(x)$.
A substitution $s=\alpha(v)$, $v\in UT_xM$ (cf.\ \eqref{e:int-alpha}) yields
$$
 E_\phi(\delta) =  n \int_{UT_xM} \delta(\alpha(v)) v\,dv
$$
since $\grad\Phi_{\alpha(v)}=v$, cf.\ the definition of $\alpha$ in Section \ref{s:general}.

On the other hand, the assumption that $\delta$ is orthogonal to $T_x\Phi$ means that
for every $v_0\in T_xM$ 
\begin{multline*}
 0 = G_\phi(\delta,d_x\Phi(v_0)) = \int_S \delta(s)\cdot d\Phi_s(v_0)\ov\rho_\phi(s)\,d\mu_x(s) \\
 = \int_S \delta(s) \<\grad\Phi_s(x),v_0\>\,d\mu_x(s) = \int_S \delta(\alpha(v)) \<v,v_0\>\,dv
 = \<E_\phi(\delta),v_0\> .
\end{multline*}
(Here we again used the fact that $\rho_\phi\equiv 1$ and the substitution $s=\alpha(v)$).
Since $\<E_\phi(\delta),v_0\>=0$ for all $v_0\in T_xM$, we have $E_\phi(\delta)=0$
and the assertion follows.
\end{proof}

\section{Estimating the Jacobian}
\label{sec-jacobian}

Consider the Jacobian $J_YP(\phi)=J_{G,Y}P(\phi)$ of $P$ with respect to~$G$
where $\phi\in\B$ and $Y$ is an $n$-dimensional subspace of $T_\phi\LL=\LL$.
Lemma \ref{l:dP} implies that
$$
 J_{G,Y}P(\phi) = |\det A_\phi|^{-1} J_{G_\phi, Y}(E_\phi) .
$$
Therefore
\be\label{e:Jmain}
 J_GP(\phi) =|\det A_\phi|^{-1} J_{G_\phi}(E_\phi) .
\ee

\begin{proposition}\label{p:JE<1}
For every $\phi\in\B$ the following holds.

1. $J_{G_\phi}(E_\phi) \le 1$.

2. $E_\phi$ is $n$-Lipschitz with respect to $G_\phi$.
\end{proposition}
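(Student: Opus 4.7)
The plan is to unwind both assertions into a clean linear-algebra fact about the operator $E_\phi E_\phi^\ast\colon T_xM\to T_xM$, where the adjoint is taken with respect to $G_\phi$ on the source and the Riemannian inner product on the target. Let $d\nu_\phi:=\ov\rho_\phi\,d\mu_x$; by \eqref{e:def-ovrho} this is a probability measure on $S$, and it is exactly the probability measure featured in Definition \ref{d:special-metric} which makes $G$ special with respect to $\Phi$. Thus
$$
G_\phi(X,Y)=n\int_S X(s)Y(s)\,d\nu_\phi(s),\qquad E_\phi(\delta)=n\int_S \delta(s)\grad\Phi_s(x)\,d\nu_\phi(s).
$$
A one-line computation identifies the adjoint as $E_\phi^\ast(v)(s)=\langle\grad\Phi_s(x),v\rangle$, so
$$
E_\phi E_\phi^\ast(v)=n\int_S \langle\grad\Phi_s(x),v\rangle\grad\Phi_s(x)\,d\nu_\phi(s).
$$

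Next I would pass to the unit tangent sphere via the diffeomorphism $\alpha_x\colon UT_xM\to S$ from Notation \ref{notation-alpha}. Setting $h(u):=\ov\rho_\phi(\alpha_x(u))$ and recalling that $\alpha_x$ pushes the Haar probability on $UT_xM$ to $\mu_x$ and that $\grad\Phi_{\alpha(u)}(x)=u$, the change of variables yields
$$
\langle E_\phi E_\phi^\ast v,\,v\rangle=n\int_{UT_xM} h(u)\langle u,v\rangle^2\,du,
$$
and $h\,du$ is a probability measure on $UT_xM$. Summing over an orthonormal basis of $T_xM$ gives
$$
\trace(E_\phi E_\phi^\ast)=n\int_{UT_xM} h(u)\,|u|^2\,du=n,
$$
since $|u|=1$ and $\int h=1$.

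Both assertions now drop out from the positive semi-definite operator $E_\phi E_\phi^\ast$ having trace exactly $n$. For (1), $J_{G_\phi}(E_\phi)^2=\det(E_\phi E_\phi^\ast)$, so the AM-GM inequality applied to the (non-negative) eigenvalues gives $\det\le(\trace/n)^n=1$, whence $J_{G_\phi}(E_\phi)\le 1$. For (2), the operator norm of a positive semi-definite operator is bounded by its trace, so $\|E_\phi E_\phi^\ast\|_{op}\le n$ and hence $\|E_\phi\|_{op}\le\sqrt n\le n$, which is the claimed Lipschitz bound. (Alternatively, a direct Cauchy--Schwarz in the integral defining $E_\phi$, using $|\grad\Phi_s|=1$ and $\nu_\phi(S)=1$, gives the same $\sqrt n$ bound in one line.)

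There is no genuine obstacle here; the only substantive point is recognizing that after the change of variables $s=\alpha_x(u)$ the operator $E_\phi E_\phi^\ast$ becomes the second-moment operator of a probability measure on the unit sphere of $T_xM$, at which point its trace is forced to equal $n$ regardless of $\phi$. The normalization $n$ in the definition of the scalar product and the normalization $\int\ov\rho_\phi\,d\mu_x=1$ are precisely what make this identity hold and what force the Jacobian bound to be $1$ (with equality exactly when $E_\phi E_\phi^\ast=\mathrm{id}_{T_xM}$, i.e.\ when the measure $h\,du$ has the same second moments as the Haar probability).
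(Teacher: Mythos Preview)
Your argument is correct and, for this proposition alone, cleaner than the paper's. The paper fixes an arbitrary $n$-dimensional subspace $Y\subset\LL$, picks orthonormal bases so that the matrix $(a_{ij})$ of $E_\phi|_Y$ is upper triangular, and then bounds $J_Y(E_\phi)=\prod a_{ii}$ by first applying AM--GM to the $a_{ii}$ and then bounding $\frac1n\sum a_{ii}\le 1$ via the elementary inequality $ab\le\frac12(a^2+b^2)$; the two resulting integrals are evaluated using $G_\phi(\delta_i,\delta_i)=1$ and $\int\rho=1$. Your route bypasses the subspace and the triangular decomposition entirely: you compute $E_\phi E_\phi^\ast$, observe (after the same change of variables $s=\alpha_x(u)$) that it is $n$ times the second-moment operator of a probability measure on $UT_xM$, hence has trace exactly $n$, and then invoke AM--GM on the eigenvalues. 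This also gives you the sharper Lipschitz constant $\sqrt n$ rather than $n$.

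One remark worth making: the paper's seemingly roundabout setup (working on an arbitrary $Y$, tracking the individual diagonal entries $a_{ii}$, and using the additive form of Cauchy--Schwarz) is not accidental. It is reused verbatim in the next proposition (Proposition~\ref{p:jacp}), where one needs to quantify the \emph{defect} in the inequality $J_{G_\phi,Y}(E_\phi)\le 1$ in terms of $\|\delta - d\Phi(E_\phi(\delta))\|_{L^2}^2$ for a specific unit vector $\delta\in Y$. That refinement requires controlling both the gap in AM--GM (the spread of the $a_{ii}$) and the gap in Cauchy--Schwarz (the $L^2$ distance between $\delta_i$ and $\xi_i$), and the triangular-matrix framework makes both gaps visible. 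Your adjoint computation $\trace(E_\phi E_\phi^\ast)=n$ recovers the qualitative bound elegantly, but if you later need the quantitative version you will have to reintroduce some of this bookkeeping.
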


\begin{proof}
1. We have to prove that $J:=J_{G_\phi,Y}(E_\phi)\le 1$ for every
$n$-dimensional subspace $Y\subset\LL$.
Choose an orthonormal basis $(\delta_1,\dots,\delta_n)$
in $Y$ and an orthonormal basis $(e_1,\dots,e_n)$
in  $T_xM$ such that the matrix $(a_{ij})$ of $E_\phi|_Y$
with respect to these bases is upper triangular
(that is, $a_{ij}=0$ for $i>j$) and $a_{ii}\ge 0$ for all $i$.
Then
$$
 J = |\det(a_{ij})| = \prod_{i=1}^n a_{ii} .
$$
Substituting $s=\alpha(v)$, $v\in T_xM$, yields
\be\label{e:Ephi}
 E_\phi(\delta) =  n \int_S \delta(s) \ov\rho_\phi(s) \grad\Phi_s(x)\,d\mu_x(s)
  = n \int_{UT_xM} \delta(\alpha(v)) \rho(v) v \,dv
\ee
for every $\delta\in\LL$, where
$$
 \rho(v) = \ov\rho_\phi(\alpha(v)) .
$$
Note that
\be\label{e:Jaux1}
 \int_{UT_xM} \rho(v)\,dv = \int_S \ov\rho_\phi(s)\,d\mu_x(s)= 1 .
\ee
Then
$$
 a_{ij} = \<E_\phi (\delta_i), e_j\> 
 = n\int_{UT_xH^n} \delta_i(\alpha(v)) \<v,e_j\> \rho(v) \,dv ,
$$
hence
\begin{multline}\label{e:Jaux}
 J_Y(E_\phi) = \prod_{i=1}^n a_{ii} \le \left(\frac1n \sum_{i=1}^n a_{ii} \right)^n
 = \left( \sum_{i=1}^n \int_{UT_xM} \delta_i(\alpha(v))\<v,e_i\>\rho(v) \,dv\right)^n \\
 \le \left( \frac12 \sum_{i=1}^n  \int_{UT_xM} \delta_i(\alpha(v))^2\rho(v) \,dv
 + \frac12 \sum_{i=1}^n \int_{UT_xM} \<v,e_i\>^2 \rho(v) \,dv \right)^n .
\end{multline}
where the first inequality is the arithmetic-geometric means inequality
and the second one follows from Cauchy--Schwarz.
Since $G_\phi(\delta_i,\delta_i)=1$, we have
\begin{equation}
\label{e:J1}
 \int_{UT_xM} \delta_i(\alpha(v))^2\rho(v) \,dv =
 \int _S \delta_i(s)^2 \ov\rho_\phi(s) \,ds = \frac 1n G_\phi(\delta_i,\delta_i) =\frac1n ,
\end{equation}
hence the first term of the sum in the right-hand side of \eqref{e:Jaux} equals $\frac12$.
By \eqref{e:Jaux1},
\be\label{e:J2}
 \sum_{i=1}^n \int_{UT_xM} \<v,e_i\>^2 \rho(v) \,dv = \int_{UT_xM} |v|^2 \rho(v) \,dv= 1,
\ee
hence the second term of the sum in the right-hand side of \eqref{e:Jaux} equals $\frac12$.
Thus the right-hand side of \eqref{e:Jaux} equals 1, and the assertion follows.

2. The above argument shows that $\sum \<E_\phi(\delta_i),e_i)\le n$
for every orthonormal $n$-frame $\{\delta_i\}$ in $(\LL,G_\phi)$ and every
orthonormal basis $\{e_i\}$ in $T_xM$. It follows that
 $\<E_\phi(\delta),e)\le n$ for any unit vectors $\delta\in(\LL,G_\phi)$ and
 $e\in T_xM$, hence the result.
\end{proof}

\begin{corollary}
If $g=g_0$, then $P$ does not increase $n$-dimensional volumes
(with respect to $G$).
Therefore every compact region in $\HH^n$ is a minimal filling.
\end{corollary}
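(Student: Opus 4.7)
The plan is to combine the general bound from Proposition \ref{p:JE<1}(1) with the explicit hyperbolic computation from Lemma \ref{l:A-model}, then feed the resulting pointwise Jacobian bound into Lemma \ref{l-area-jacobian-inequality}, and finally repeat the extension-and-projection argument used in the proof of Theorem \ref{minimality-thm}.

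First I would verify the pointwise Jacobian bound $J_GP(\phi)\le 1$ for every $\phi\in\B$. By formula \eqref{e:Jmain} we have
$$
J_GP(\phi) = |\det A_\phi|^{-1}\,J_{G_\phi}(E_\phi).
$$
In the hyperbolic case Lemma \ref{l:A-model} gives $A_\phi=\mathrm{id}_{T_xM}$ where $x=P(\phi)$, so $|\det A_\phi|=1$, and Proposition \ref{p:JE<1}(1) gives $J_{G_\phi}(E_\phi)\le 1$. Hence $J_GP\le 1$ on $\B$. Now for any Riemannian $n$-manifold $N$ and any 1-Lipschitz $f\colon N\to\B$, Lemma \ref{l-area-jacobian-inequality} yields
$$
\vol(P\circ f)\le \int_N J_GP(f(x))\,d\vol_N(x)\le \vol(N),
$$
which is the first assertion that $P$ does not increase $n$-dimensional volumes.

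Second, I would deduce the minimal filling statement by the same extension argument used in the proof of Theorem \ref{minimality-thm}. Let $D\subset\HH^n$ be a compact region, chosen $R$ so that $D\subset B_o(R/5)$, and let $M'=(D,g')$ be any competitor Riemannian manifold with $\partial M'=\partial D$ and $d_{M'}\ge d_{(D,g_0)}$ on boundary pairs. Since $\Phi$ is distance-preserving, $\Phi|_{\partial D}$ is 1-Lipschitz with respect to $d_{M'}$, so by the Lipschitz extension property for $L^\infty(S)$ it extends to a 1-Lipschitz map $\Phi'\colon M'\to\LL$, which can be post-composed with the cut-off into $\B$ without altering boundary values. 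Then $F:=P\circ\Phi'\colon M'\to\HH^n$ restricts to the identity on $\partial D$ (by Lemma \ref{l:P-basic}(2)), so $F(M')\supset D$, and combining with the volume bound from the previous paragraph gives $\vol(M')\ge\vol(F(M'))\ge\vol(D,g_0)$.

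There is essentially no obstacle here: all the analytic work has been carried out already. The only thing worth checking is consistency, namely that the case $g=g_0$ lies in the neighborhood of metrics for which $\Phi$, $P$, and $G$ are defined and satisfy the earlier results, so that Proposition \ref{p:JE<1} and Lemma \ref{l:A-model} are both applicable at $g=g_0$; this is immediate from the constructions in Section \ref{sec-construction}. The corollary is thus an assembly statement, demonstrating that the whole machinery of this section already handles the model case cleanly, with the stronger strict minimality and the almost-hyperbolic perturbation deferred to the compression argument of Section \ref{sec-compression} and the Jacobian estimates of Sections \ref{sec-jacobian}--\ref{sec-correction-factor}.
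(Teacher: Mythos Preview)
Your argument is correct and matches the paper's approach exactly: the paper's proof is just the three lines invoking \eqref{e:Jmain}, Lemma~\ref{l:A-model}, and Proposition~\ref{p:JE<1} to get $J_GP\le 1$, followed by ``and the assertions follow.'' You have simply spelled out what ``the assertions follow'' means by invoking Lemma~\ref{l-area-jacobian-inequality} and the extension argument from the proof of Theorem~\ref{minimality-thm}, which is precisely the intended reading.
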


\begin{proof}
Recall that $J_GP(\phi) =|\det A_\phi|^{-1} J_{G_\phi}(E_\phi)$,
and in the case $g=g_0$ we have $\det A_\phi=1$ by Lemma \ref{l:A-model}.
Hence $J_GP(\phi) \le 1$ by Proposition \ref{p:JE<1},
and the assertions follow.
\end{proof}

\begin{proposition}
\label{p:jacp}
There is a positive constant $c_0=c_0 (R,n)>0$ such that
for every $g$ sufficiently close to $g_0$ the following holds:
if $\phi\in\B$, $Y\subset\LL$ is an $n$-dimensional subspace,
$\delta\in Y$ and $G_\phi(\delta,\delta)=1$, then
$$
J_{G_\phi,Y}(E_\phi) \leq 1 - c_0\|\delta - d\Phi ( E_\phi(\delta) )\|_{L^2(S)}^2 .
$$
\end{proposition}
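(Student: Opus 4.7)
The plan is to refine the proof of Proposition \ref{p:JE<1} via a coordinate-free factorization of the Jacobian. First, introduce the symmetric positive operator $\hat A\colon T_xM\to T_xM$ (where $x=P(\phi)$) defined by
\[
  \langle\hat A w,w'\rangle = G_\phi\bigl(d\Phi(w),d\Phi(w')\bigr) = n\!\int_S\langle\grad\Phi_s(x),w\rangle\langle\grad\Phi_s(x),w'\rangle\ov\rho_\phi(s)\,d\mu_x(s);
\]
equivalently $\hat A = E_\phi\circ d\Phi$. The adjoint identity $\langle E_\phi(\delta),w\rangle = G_\phi(\delta,d\Phi(w))$ (which is immediate from the definitions of $E_\phi$ and $G_\phi$) shows that the $G_\phi$-orthogonal projection onto $T:=d\Phi(T_xM)$ equals $\pi = d\Phi\circ\hat A^{-1}\circ E_\phi$, so $E_\phi|_Y = \hat A\circ d\Phi^{-1}\circ\pi|_Y$. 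Since $d\Phi$ transports the Riemannian metric on $T_xM$ to the bilinear form with Gram matrix $\hat A$, its Jacobian to $(T,G_\phi|_T)$ is $\sqrt{\det\hat A}$, and taking Jacobians of the composition yields
\[
  J_{G_\phi,Y}(E_\phi) = \sqrt{\det\hat A}\cdot\prod_{i=1}^n\cos\theta_i,
\]
where $\theta_1,\dots,\theta_n$ are the principal angles between $Y$ and $T$ inside $(\LL,G_\phi)$.

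Next I would bound each factor quantitatively. For $\phi\in\B$ and $g$ close to $g_0$, the density $\ov\rho_\phi$ is uniformly bounded in some interval $[c_1,c_2]\subset(0,\infty)$ depending only on $R$ and $n$ (because $\rho_\phi = e^{n(\Phi_s(x)-\phi(s))}$ and $|\Phi_s(x)|,|\phi(s)|$ are uniformly bounded). Hence the eigenvalues $\nu_i$ of $\hat A$ lie in $[c_1,c_2]$ and satisfy $\sum\nu_i = n\int_S\ov\rho_\phi\,d\mu_x = n$. A second-order Taylor expansion of $\log$ at $1$, combined with $\sum(\nu_i-1)=0$, gives $\det\hat A \le 1 - c\sum(\nu_i-1)^2 = 1 - c\,\|\hat A-I\|_{\mathrm{HS}}^2$, whence
\[
  1-\sqrt{\det\hat A}\ge c'\,\|\hat A-I\|_{\mathrm{op}}^2
\]
for some $c'=c'(R,n)>0$. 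Separately, a standard principal-angle computation (expand a unit $\delta\in Y$ in a $G_\phi$-orthonormal basis of principal vectors; the orthogonal components $v_i-\pi(v_i)$ are mutually $G_\phi$-orthogonal with $\|v_i-\pi(v_i)\|^2=\sin^2\theta_i$) gives
\[
  1-\prod_{i=1}^n\cos\theta_i \ge 1-\cos\theta_{\max} \ge \tfrac12\sin^2\theta_{\max} \ge \tfrac12\|\delta^\perp\|_{G_\phi}^2,
\]
where $\delta^\perp:=\delta-\pi(\delta)$. Since both factors of $J$ lie in $[0,1]$, each of the two displayed inequalities remains valid with $1-J$ on the left-hand side.

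The final step is a Pythagorean decomposition of the target quantity. Using $d\Phi\circ E_\phi = (d\Phi\,\hat A\,d\Phi^{-1})\circ\pi$ one obtains
\[
  \delta - d\Phi(E_\phi(\delta)) = \delta^\perp + (I - d\Phi\,\hat A\,d\Phi^{-1})\pi(\delta),
\]
where the first summand is $G_\phi$-orthogonal to $T$ while the second lies in $T$. Applying Pythagoras in $(\LL,G_\phi)$, rewriting $I - d\Phi\,\hat A\,d\Phi^{-1} = d\Phi(I-\hat A)d\Phi^{-1}$ on $T$, and using that $d\Phi$ and $d\Phi^{-1}$ have operator norms bounded by $\sqrt{c_2}$ and $1/\sqrt{c_1}$ respectively (as maps between $T_xM$ with its Riemannian metric and $(T,G_\phi|_T)$), the second summand satisfies $\|(I-d\Phi\,\hat A\,d\Phi^{-1})\pi(\delta)\|_{G_\phi}^2 \le C(R,n)\|\hat A-I\|_{\mathrm{op}}^2 \le C'(R,n)(1-J)$. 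Combined with $\|\delta^\perp\|_{G_\phi}^2\le 2(1-J)$ this yields $\|\delta - d\Phi(E_\phi(\delta))\|_{G_\phi}^2 \le C''(R,n)(1-J)$, and the uniform equivalence of $\|\cdot\|_{G_\phi}$ and $\|\cdot\|_{L^2(S)}$ on $\LL$ (via the same bounds on $\ov\rho_\phi$ and on $\lambda$) converts this into the required estimate with some $c_0 = c_0(R,n)>0$.

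I expect the main obstacle to be the determinant bound $1-\sqrt{\det\hat A}\ge c'\,\|\hat A-I\|_{\mathrm{op}}^2$ uniformly in $\phi\in\B$ and for all $g$ in a fixed $C^r$-neighborhood of $g_0$: one must verify carefully that the spectrum of $\hat A$ remains inside a fixed compact interval $[c_1,c_2]\subset(0,\infty)$ that does not degenerate as $\phi$ approaches the boundary of $\B$, which is what makes the Taylor-type bound $\log(1+t)\le t - c\,t^2$ usable with a uniform constant $c$.
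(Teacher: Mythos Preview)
Your argument is correct and constitutes a genuinely different proof from the paper's. The paper proceeds by choosing a basis $(\delta_1,\dots,\delta_n)$ of $Y$ with $\delta_1=\delta$, putting the matrix $(a_{ij})$ of $E_\phi|_Y$ in upper-triangular form, and then sharpening the AM--GM inequality from Proposition~\ref{p:JE<1}: it extracts a term of the form $\sum_{i<j}(\sqrt{a_{ii}}-\sqrt{a_{jj}})^2$ and combines this with the identity $m=\tfrac1n\sum a_{ii}=1-\tfrac12\sum_i\int(\delta_i(\alpha(v))-\langle v,e_i\rangle)^2\rho(v)\,dv$ to control both $\|\delta_1-\xi_1\|_{L^2}^2$ and $(a_{11}-1)^2$, which together dominate $\|\delta-d\Phi(E_\phi(\delta))\|_{L^2}^2$.

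Your route instead isolates the two sources of Jacobian deficiency cleanly by the factorization $J=\sqrt{\det\hat A}\cdot\prod_i\cos\theta_i$: the principal-angle product measures how far $Y$ is from $T=d\Phi(T_xM)$, while $\det\hat A$ (with the trace constraint $\trace\hat A=n$ coming from $|\grad\Phi_s|\equiv1$ and $\int\ov\rho_\phi\,d\mu_x=1$) measures how far $d\Phi$ is from being $G_\phi$-isometric. This is more invariant and makes the structure of the estimate transparent; the paper's approach is more hands-on and stays closer to the calculation that already proved $J\le 1$. One minor point: your worry at the end is not really an obstacle, since the uniform bounds on $\ov\rho_\phi$ you already invoked (via $|\Phi_s(P(\phi))|\le C(n,R)$ from Lemma~\ref{l:P-basic} and $|\phi(s)|\le R$) immediately pin the spectrum of $\hat A$ inside a fixed $[c_1,c_2]\subset(0,\infty)$, so the log-concavity bound is uniform.
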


\begin{proof}
Choose an orthonormal basis $(\delta_1,\dots,\delta_n)$
in $Y$ such that $\delta_1=\delta$.
Then there exists an orthonormal basis $(e_1,\dots,e_n)$
in  $T_xM$ such that the matrix $(a_{ij})$ of $E_\phi|_Y$
with respect to these bases is upper triangular.
We use notations and formulas from the proof
of Proposition  \ref{p:JE<1}.

Let $m=\frac 1n \sum_i a_{ii}$. Then
\begin{multline*}
 m = \sum_{i=1}^n\int_{UT_xH^n} \delta_i(\alpha(v)) \<v,e_j\> \rho(v) \,dv \\
 = \frac12 \sum_{i=1}^n  \int_{UT_xH^n} \delta_i(\alpha(v))^2\rho(v) \,dv
 + \frac12 \sum_{i=1}^n \int_{UT_xH^n} \<v,e_i\>^2\rho(v) \,dv \\
 -\frac12 \sum_{i=1}^n \int_{UT_xH^n} (\delta_i(\alpha(v))-\<v,e_i\>)^2 \rho(v) \,dv
\end{multline*}
By \eqref{e:J1} and \eqref{e:J2}, the sum of the first two terms
in the right-hand side equals 1
(cf.\  the proof of Proposition \ref{p:JE<1}),
hence
\begin{equation}
\label{tr2}
 m = 1 - \frac12 \sum_{i=1}^n \int_{UT_xH^n} (\delta_i(\alpha(v))-\<v,e_i\>)^2 \rho(v) \,dv
\end{equation}
Denote $\xi_i=d_x\Phi(e_i)$. Then, for every $v\in T_xM$,
$$
 \<v,e_i\> = \< \grad\Phi_{\alpha(v)}(x),e_i\> = d_x\Phi_{\alpha(v)}(e_i) = \xi_i(\alpha(v)),
$$
hence
$$
  \int_{UT_xM} (\delta_i(\alpha(v))-\<v,e_i\>)^2 \rho(v) \,dv
  = \int_{UT_xM} (\delta_i(\alpha(v))-\xi_i(\alpha(v)))^2 \rho(v) \,dv
  \ge 2c_1\|\delta_i-\xi_i\|_{L^2}^2
$$
for some $c_1=c_1(n,R)>0$.
Here we used the fact that $\rho$ and the derivatives of $\alpha$ are uniformly bounded.
Then \eqref{tr2} implies that
\begin{equation}
\label{tr3}
 m \le  1 - c_1 \sum_{i=1}^n \|\delta_i-\xi_i\|^2_{L^2} 
  \le 1 - c_1 \|\delta_1-\xi_1\|^2_{L^2} .
\end{equation}

Denote $J=J_{G_\phi,Y}(E_\phi)$, then $J=\prod_{i=1}^n a_{ii}$
as in the proof of Proposition \ref{p:JE<1}.
By the inequality between arithmetic and geometric means,
\begin{multline}
\label{j1}
J^{1/n} =  \left( \prod_{i<j} \sqrt{a_{ii}a_{jj}} \right)^{2/n(n-1)}
\le \frac 2{n(n-1)}\sum_{i<j} \sqrt{a_{ii}a_{jj}} \\
= \frac 2{n(n-1)}\sum_{i<j} \frac{a_{ii}+a_{jj}-(\sqrt{a_{ii}}-\sqrt{a_{jj}})^2}2
= \frac 1n \sum_{i=1}^n a_{ii} - \frac 1{n(n-1)} \sum_{i<j} (\sqrt{a_{ii}}-\sqrt{a_{jj}})^2 \\
= m - \frac 1{n(n-1)} \sum_{i<j} (\sqrt{a_{ii}}-\sqrt{a_{jj}})^2 
\le m - \frac 1{n(n-1)} \sum_{i=1}^n (\sqrt{a_{11}}-\sqrt{a_{ii}})^2 .
\end{multline}
Since $m=\frac 1n \sum a_{ii} \le 1$ by \eqref{tr2}, we have $a_{ii}\le n$ for all $i$.
It follows that
$$
 |\sqrt{a_{11}}-\sqrt{a_{ii}}| \ge \frac 1{2\sqrt n} |a_{11}-a_{ii}| .
$$
Hence
$$
 \sum_{i=1}^n (\sqrt{a_{11}}-\sqrt{a_{ii}})^2 \ge \frac1{4n}  \sum_{i=1}^n (a_{11}-a_{ii})^2 
 \ge \frac14 (a_{11}-m)^2
$$
(the last inequality here follows from the fact that 
$\sum x_i^2 \ge n \big(\frac1n \sum x_i)^2$
where $x_i=a_{11}-a_{ii}$, $i=1,\dots,n$).
This and \eqref{j1} imply
\begin{equation}
\label{j2}
J^{1/n} \le m - \frac 1{4n(n-1)} (a_{11}-m)^2 .
\end{equation}
Since $m\le 1$, we have
$$
 m \le \frac{m+1-(m-1)^2}2
$$
(this inequality is equivalent to $m^2\le m$).
Plugging this into \eqref{j2} yields
$$
\begin{aligned}
 J^{1/n} &\le \frac{m+1}2 - \frac12(m-1)^2 - \frac 1{4n(n-1)} (a_{11}-m)^2 \\
 &\le \frac{m+1}2 - \frac 1{4n(n-1)} ((m-1)^2+(a_{11}-m)^2)
 \le \frac{m+1}2 - \frac 1{8n(n-1)} (a_{11}-1)^2
\end{aligned}
$$
(the last inequality here follows from the obvious one
$x^2+y^2\ge \frac12(x+y)^2$ applied to $x=m-1$ and $y=a_{11}-m$).
By \eqref{tr3} we have
$$
 \frac{m+1}2 \le 1 - \frac{c_1}2 \|\delta_1-\xi_1\|^2_{L^2},
$$
therefore
\begin{equation}
\label{j3}
 J^{1/n} \le 1 - \frac{c_1}2 \|\delta_1-\xi_1\|^2_{L^2} - \frac 1{8n(n-1)} (a_{11}-1)^2
 \le 1 - c_2 (\|\delta_1-\xi_1\|^2_{L^2} + (a_{11}-1)^2) 
\end{equation}
where $c_2 =\min\{\frac{c_1}2,\frac1{8n(n-1)}\}$.
Recall that $E_\phi (\delta) = E_\phi (\delta_1) = a_{11} e_1$
by the choice of our bases, hence
$$
  d_x\Phi( E_\phi(\delta)) = a_{11}\xi_1 .
$$
Therefore
\begin{equation}
\label{j4}
  \|\delta - d_x\Phi \circ E_\phi (\delta)\|_{L^2}^2
   = \|\delta - a_{11}\xi_1\|_{L^2}^2
   \le 2( \|\delta - \xi_1\|_{L^2}^2 + (a_{11}-1)^2 \|\xi_1\|_{L^2}^2 )
\end{equation}
(this is the inequality $\|x+y\|^2 \le 2(\|x\|^2+\|y\|^2)$
applied to vectors $x=\delta - \xi_1$ and $y=(1-a_{11})\xi_1$ in $L^2$).
Recall that $\xi_1=d_x\Phi(e_1)$ is a unit vector in
the space  $(\LL,G_{\Phi(x)})\subset L^2(\mu_x)$,
hence $ \|\xi_1\|_{L^2}^2\le M$ for some $M=M(n,R)>0$.
(Here we use the fact that the densities of the measures $\mu_x$
are uniformly bounded.)
We may assume that $M\ge 2$, then \eqref{j4} implies that
$$
  \|\delta - d_x\Phi \circ E_\phi (\delta)\|_{L^2}^2 \le M(\|\delta_1-\xi_1\|^2_{L^2} + (a_{11}-1)^2)
$$
Plugging this into \eqref{j3} yields
$$
J^{1/n} \le 1 -  c_2M^{-1} \|\delta - d_x\Phi \circ E_\phi (\delta)\|_{L^2}^2 
$$
hence the result.
\end{proof}

\section{Estimating the correction factor}
\label{sec-correction-factor}

In this section we estimate the correction factor
$\det A_\phi$ in \eqref{e:Jmain}.

\begin{proposition}\label{p:A-estimate}
There are positive constants $r$ and $C=C(n,R)$ such that
for every $g$ sufficiently close to $g_0$
and every $\phi\in\B$ one has
$$
 \|A_\phi-I\| \le C\cdot \|g-g_0\|_{C^r} \cdot\|\phi-\Phi(P(\phi))\|_{L^2(S)} 
$$
where $I=id_{T_{P(\phi)}M}$, and
$$
 |\det A_\phi -1 | \le C\cdot \|g-g_0\|_{C^r}^2 \cdot\|\phi-\Phi(P(\phi))\|_{L^2(S)}^2 .
$$
\end{proposition}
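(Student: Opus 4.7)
The plan rests on two pointwise identities for $A_{x,s}$ that hold for every $g$: (A) $\trace A_{x,s} = n$ for all $x\in M$, $s\in S$, and (B) $\int_S A_{x,s}\,d\mu_x(s) = I$ for every $x\in M$. Together with Lemma~\ref{l:A-model} (which says $A_{x,s}=I$ when $g=g_0$) and the smooth dependence of $A_{x,s}$ on $g$, they upgrade the naive linear bound $\|A_\phi - I\| = O(\|g-g_0\|_{C^r})$ into the two sharper estimates in the proposition.

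To establish both identities I would expand \eqref{e:def-Axs} as
$$
A_{x,s}(\xi) = n\<\grad\Phi_s,\xi\>\grad\Phi_s + (d\log\lambda)(\xi)\grad\Phi_s + \nabla_\xi\grad\Phi_s.
$$
The trace equals $n + (d\log\lambda)(\grad\Phi_s) + \Delta\Phi_s$, and Lemma~\ref{l:Ls-is-Laplacian} identifies the middle term with $-\Delta\Phi_s$, proving (A). For (B), the first summand integrates via $s=\alpha(v)$ to $n\int_{UT_xM}\<v,\xi\>v\,dv = \xi$; the remaining two summands assemble into $\lambda^{-1}\nabla_\xi X_s$ with $X_s := \lambda\grad\Phi_s$, so their integral against $d\mu_x = \lambda\,ds$ becomes $\int_S \nabla_\xi X_s\,ds = \nabla_\xi\int_S X_s(x)\,ds$ (the interchange is legitimate because $ds$ is $x$-independent). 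But $\int_S X_s(x)\,ds = \int_S \grad\Phi_s(x)\,d\mu_x(s) = \int_{UT_xM} v\,dv = 0$ pointwise in $x$ by the $\alpha_x$-transport, so this piece vanishes.

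Using (B) and $\int\ov\rho_\phi\,d\mu_x = 1$, I then rewrite
$$
A_\phi - I = \int_S (\ov\rho_\phi(s) - 1)(A_{x,s} - I)\,d\mu_x(s).
$$
Lemma~\ref{l:A-model} and smooth dependence on $g$ give $\sup_{x,s}\|A_{x,s}-I\|_{\mathrm{op}} \le C\|g-g_0\|_{C^r}$ uniformly on the compact region of interest. A short computation from \eqref{e:def-rho}--\eqref{e:def-ovrho}, using the $L^\infty$-boundedness of $\phi - \Phi(P(\phi))$ for $\phi\in\B$ and the positive bounds on $\lambda$, yields $\|\ov\rho_\phi-1\|_{L^2(\mu_x)} \le C\|\phi-\Phi(P(\phi))\|_{L^2(S)}$. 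Cauchy--Schwarz on the displayed integral then delivers the first inequality. For the second, plug the first into the Taylor expansion $\det A_\phi = 1 + \trace(A_\phi - I) + O(\|A_\phi - I\|^2)$; identity (A) gives $\trace A_\phi = n\int_S \ov\rho_\phi\,d\mu_x = n$, so the linear term vanishes and the squared first bound supplies the quadratic estimate.

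The only non-routine step is (B): it requires spotting the antiderivative $X_s = \lambda\grad\Phi_s$ and recognizing that the sphere-average $\int_{UT_xM} v\,dv$ vanishes once $\mu_x$ is pushed to the uniform measure on $UT_xM$ via $\alpha_x$. Everything else is bookkeeping around identities already available in the paper.
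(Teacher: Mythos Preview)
Your proof is correct and somewhat more direct than the paper's. The first inequality is obtained essentially the same way in both: the paper also arrives (via its linear interpolation $\psi_t=1+t\delta$) at $A_\phi-I=b(1)^{-1}\int_S(\rho_\phi-1)(A_{p,s}-I)\,d\mu_p$, which is your displayed identity after normalization; your identity (B) is exactly the paper's Lemma~\ref{l:Aphi0}, though the paper proves it by differentiating $P\circ\Phi=id_M$ and checking $E_{\phi_0}\circ d_p\Phi=I$ rather than by your covariant-derivative argument. The genuine divergence is in establishing $\trace(A_\phi-I)=0$: the paper never proves the pointwise identity $\trace A_{x,s}=n$, but instead shows $\frac{d}{dt}\big|_{t=0}\det A_{\phi_t}=0$ by writing $\det A_{\phi_t}=J_{G_{\phi_t}}(E_{\phi_t})/J_GP(\phi_t)$ and invoking Proposition~\ref{p:dJ} (hence the minimal-surface Proposition~\ref{p:mean-curvature-zero}) for the denominator and the maximum in Proposition~\ref{p:JE<1} for the numerator. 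Your route bypasses this geometric detour by extracting $\trace A_{x,s}=n$ directly from the Liouville-invariance Lemma~\ref{l:Ls-is-Laplacian}; this is cleaner and logically independent of Sections~\ref{sec-jacobian} and the mean-curvature computation, at the cost of not illuminating why the trace vanishing is ``the same fact'' as minimality of $\Phi(M)$.
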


\begin{proof}
Fix $\phi\in\B$ and denote $p=P(\phi)$, $I=id_{T_pM}$.
Since $\phi\in\B$, $|\phi(s)|\le R$ for a.e.\ $s\in S$,
and we may assume that $|\phi(s)|\le R$ for all $s\in S$.
Throughout the argument, we denote by $C(n,R)$ various
positive constants depending only on $n$ and~$R$.
By Lemma \ref{l:P-basic}, $p=P(\phi)$ belongs to a ball
of radius $R_1=C(n,R)$ centered at~$o$.
Hence $|\Phi_s(p)|\le C(n,R)$ and 
$C(n,R)^{-1}\le\frac{d\mu_p(s)}{ds}\le C(n,R)$
for all $s\in S$.
We will use these estimates without explicit reference.

For every $t\in[0,1]$, define $\psi_t,\phi_t\in L^\infty(S)$ by
$$
 \psi_t(s) = 1-t+t \cdot e^{n(\Phi_s(p)-\phi(s))}
$$
and
$$
 \phi_t(s) = \Phi_s(p) -\frac1n \log \psi_t(s) .
$$
Then $\psi_t$ is linear in $t$ and
$$
 \psi_t(s) = e^{n(\Phi_s(p)-\phi_t(s))}
$$
for all $t\in[0,1]$.
Note that $\phi_t$ is a smooth function of~$t$,
$\phi_0=\Phi(p)$ and $\phi_1=\phi$.
Therefore $P(\phi_t)$ is defined for all
$t$ from a neighborhood of 0 in $[0,1]$,
as well as for $t=1$.
We are going to study $\det A_{\phi_t}$ as a function of~$t$.

\begin{lemma}
$P(\phi_t)=p$ for all $t$ such that $P(\phi_t)$ is defined.
\end{lemma}

\begin{proof}
By Definition \ref{d:P} of $P$, the identity $P(\phi_t)=p$
is equivalent to
$$
 \int_S \psi_t(s) \grad\Phi_s(p) \,d\mu_p(s)  = 0.
$$
The left-hand side is linear in $t$ and the equality
holds for $t=0$ and $t=1$ (since $P(\phi_0)=P(\Phi(p))=p$
and $P(\phi_1)=P(\phi)=p$).
Therefore it holds for all~$t$.
\end{proof}

\begin{lemma}\label{l:Aphi0}
$A_{\phi_0}=I$.
\end{lemma}

\begin{proof}
Since $P\circ\Phi=id_M$, we have $d_{\phi_0}P\circ d_p\Phi=I$.
By Lemma \ref{l:dP}, we have $d_{\phi_0}P = A_{\phi_0}^{-1}\circ E_{\phi_0}$,
hence it suffices to verify that
\be\label{e:adfiuhvkdfjghkldfsjgnkldsfj}
 E_{\phi_0}\circ d_p\Phi =I .
\ee
Since $\phi_0\in\Phi(M)$, we have $\ov\rho_{\phi_0}\equiv 1$,
and the definition of $E_{\phi_0}$,
cf.\ \eqref{e:defE}, takes the form
$$
E_\phi(\delta) =  n \int_S \delta(s) \grad\Phi_s(p)\,d\mu_p(s)
$$
for all $\delta\in\LL$. Substitute $\delta=d\Phi(v_0)$ where
$v_0\in T_pM$, that is, 
$$
\delta(s)=d\Phi_s(v_0)=\<\grad\Phi_s(p),v_0\> .
$$
This yields
$$
 E_\phi(d\Phi_s(v_0)) = n \int_S \<\grad\Phi_s(p),v_0\> \grad\Phi_s(p)\,d\mu_p(s)
 = n \int_{UT_pM} \<v,v_0\> v \,dv
$$
where the second identity follows by substituting $s=\alpha(v)$,
cf.\ \eqref{e:int-alpha}. By the symmetry under rotations, the
latter integral is a multiple of~$v_0$. To find out the coefficient,
observe that the scalar product of this integral with $v_0$ equals
$$
 n \int_{UT_pM} \<v,v_0\>^2 \,dv = |v_0|^2 .
$$
Thus
$
 E_\phi(d\Phi_s(v_0)) = v_0
$
which proves \eqref{e:adfiuhvkdfjghkldfsjgnkldsfj}.
Lemma \ref{l:Aphi0} follows.
\end{proof}

The definitions imply that $A_{\phi_t}$ is a smooth function of $t$
(in fact, it is a rational function, see below).

\begin{lemma}\label{l:diff-detA}
$\frac d{dt}\big|_{t=0} \det A_{\phi_t}=0$.
\end{lemma}

\begin{proof}
By \eqref{e:Jmain} we have
$$
 \det A_{\phi_t} = \frac{J_{G_{\phi_t}}(E_{\phi_t})}{J_GP(\phi_t)}
$$
At $t=0$, all terms of this formula are equal to~1.
Indeed, $\det A_{\phi_0}=1$ by Lemma \ref{l:Aphi0},
and the Jacobian of the projection $P$ equals 1 at the surface $\Phi(M)$.
Since $P$ is a projection in the sense of Definition \ref{d:projection},
Proposition \ref{p:dJ} implies that the derivative of the denominator at $t=0$ equals~0.
By Proposition \ref{p:JE<1}, the numerator attains its maximum at $t=0$,
hence its derivative at $t=0$ also equals~0. Therefore the derivative
of the fraction is zero.
\end{proof}

The definitions of $A_\phi$ 
(cf.\ \eqref{e:def-rho}--\eqref{e:def-Aphi})
and $\psi_t$ yield that
\be\label{e:Aphi-formula}
 A_{\phi_t} = \frac{A(t)}{b(t)}
\ee
where $A(t)$ is an operator on $T_pM$ given by
$$
 A(t) =  \int_S \psi_t(s) A_{p,s}\, d\mu_p(s)
$$
and 
$$
 b(t) =  \int_S \psi_t(s)\, d\mu_p(s) .
$$
Since $\psi_t$ is linear in $t$, so are $A(t)$ and $b(t)$.
The definition of $\psi_t$ implies that
$$
 C(n,R)^{-1}\le \psi_t(s) \le C(n,R)
$$
for all $s\in S$, hence
\be\label{e:b-estimate}
 C(n,R)^{-1} \le b(t) \le C(n,R)
\ee
for all $t\in[0,1]$. We rewrite $\psi_t$ as
\be\label{e:psi-via-delta}
 \psi_t(s) = 1+ t\delta(s)
\ee
where
$$
 \delta(s) = e^{n(\Phi_s(p)-\phi(s))} - 1 .
$$
Since $|\Phi_s(p)-\phi(s)|\le C(n,R)$,
we have
$$
 |\delta(s)| \le C(n,R)\cdot |\Phi_s(p)-\phi(s)| = C(n,R)\cdot |\Phi(p)(s)-\phi(s)| 
$$
for all $s\in S$. Hence
\be\label{e:delta-estimate}
 \|\delta\|_{L^2} \le C(n,R)\cdot \|\Phi(p)-\phi\|_{L^2} .
\ee
Using \eqref{e:psi-via-delta}, we rewrite $b(t)$ as
\be\label{e:b-formula}
 b(t) =  \int_S (1+t\delta(s))\, d\mu_p(s) = 1 + t \int_S \delta(s)\, d\mu_p(s) .
\ee
In particular, $b(0)=1$ and hence $A(0)=I$ by \eqref{e:Aphi-formula} Lemma \ref{l:Aphi0}.
Similarly, we rewrite $A(t)$ as
$$
 A(t) = \int_S (1+t\delta(s)) A_{p,s}\,d\mu_p(s) 
 = \int_S A_{p,s}\,d\mu_p(s) + t \int_S \delta(s) A_{p,s}\,d\mu_p(s) .
$$
Substituting $t=0$ yields that the first term equals $A(0)=I$, thus
\be\label{e:A-formula}
 A(t) = I +  t \int_S \delta(s) A_{p,s}\,d\mu_p(s) 
 = b(t) I + t\Delta
\ee
where
$$
 \Delta = \int_S \delta(s) (A_{p,s}-I) \,d\mu_p(s) .
$$
(The second identity in \eqref{e:A-formula} uses \eqref{e:b-formula}.)
By Cauchy--Schwarz we have
$$
 \|\Delta\| \le C(n,R)\cdot \|\delta\|_{L^2}\cdot \|A_{p,s}-I\|_{L^2} 
 \le C(n,R)\cdot \|\delta\|_{L^2}\cdot \|g-g_0\|_{C^r} 
$$
for some $r$.
The second inequality follows from the fact that
$A_{p,s}$ depends smoothly on $g$ 
and equals $I$ if $g=g_0$ (cf.\ Lemma \ref{l:A-model}).
Substituting \eqref{e:delta-estimate} yields
\be\label{e:Delta-estimate}
 \|\Delta\| \le C(n,R)\cdot \|g-g_0\|_{C^r} \cdot \|\Phi(p)-\phi\|_{L^2}
\ee

By \eqref{e:Aphi-formula} and \eqref{e:A-formula} we have
$$
 A_{\phi_t} = I + \frac t{b(t)} \Delta .
$$
In particular, $A_\phi=A_{\phi_1}=I+\frac 1{b(1)}\Delta$.
Hence
$$
 \|A_\phi-I\| = b(1)^{-1} \|\Delta\| 
 \le C(n,R)\cdot \|g-g_0\|_{C^r} \cdot \|\Phi(p)-\phi\|_{L^2}
$$
by \eqref{e:b-estimate} and \eqref{e:Delta-estimate},
and the first assertion of Proposition \ref{p:A-estimate}
follows.
Furthermore, since $\|\Phi(p)-\phi\|_{L^2}\le C(n,R)$,
we may assume that $\|g-g_0\|_{C^r}$ is so small that
\be\label{e:Delta-small}
 \|A_\phi-I\| \le 1 .
\ee
By Lemma \ref{l:diff-detA},
$$
 0 = \frac d{dt}\bigg|_{t=0} \det A_{\phi_t}
 = \frac d{dt}\bigg|_{t=0} \det\left( I + \frac t{b(t)} \Delta \right) = \trace\Delta
$$
since $b(0)=1$.
Hence
\be\label{e:trace-Delta}
 \trace(A_\phi-I) = b(1)^{-1} \trace\Delta = 0 . 
\ee
We need the following finite-dimensional lemma.

\begin{lemma}\label{l:matrix-estimate}
There is a constant $C=C(n)>0$ such that the following holds.
For every $n\times n$ matrix $A$ such that
$\trace A=0$ and $\|A\|\le 1$,
one has
$$
 |\det(I+A) - 1| \le C\|A\|^2
$$
where $I$ is the identity matrix.
\end{lemma}

\begin{proof}
Indeed, the differential of $\det(I+A)$ on the subspace $\{\trace A=0\}$
is zero at the point $A=0$, hence
$\det(I+A) - 1\le C_1 \|A\|^2$ for some constant $C_1$ and provided that
$\|A\|^2\le r$ for some positive $r>0$ (this follows from the Taylor expansion
for $\det(I+A)$). This proves the inequality in the $r$-neighborhood of $A=0$.
Since $\det(I+A)$ is bounded on the ball $\{\|A\| \leq 1\}$
(since it is continuous and the ball is compact),
the inequality is trivial for $A$ with $r\leq\|A\| \leq 1$,
where one can use $\max \{\det(I+A), \|A\| \leq 1\}/r^2$ for the constant $C$. 
\end{proof}

Now the second assertion of Proposition \ref{p:A-estimate} follows
from \eqref{e:Delta-small}, \eqref{e:trace-Delta} and Lemma \ref{l:matrix-estimate}
applied to $A=A_\phi-I$.
\end{proof}

\begin{corollary}\label{cor:dP-lip}
If $g$ is sufficiently close to $g_0$, then for every $\phi\in\B$
the map $d_\phi P$ is $2n$-Lipschitz with respect to the metric
$G_\phi$ on $T_\phi\LL=L$.
\end{corollary}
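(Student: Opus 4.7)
The proof will be a direct composition estimate using the factorization of $d_\phi P$ together with the two main technical results just established.

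The plan is as follows. By Lemma \ref{l:dP} we have the factorization $d_\phi P = A_\phi^{-1}\circ E_\phi$, so it suffices to control the two factors separately. First, Proposition \ref{p:JE<1}(2) gives immediately that $E_\phi$ is $n$-Lipschitz as a map from $(\LL,G_\phi)$ to $T_{P(\phi)}M$ equipped with its Riemannian inner product. Second, I need to show that, after shrinking the neighborhood of $g_0$ if necessary, the operator $A_\phi^{-1}$ on $T_{P(\phi)}M$ has norm at most $2$.

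For the bound on $A_\phi^{-1}$, the key input is the first inequality of Proposition \ref{p:A-estimate}:
$$
\|A_\phi-I\|\le C(n,R)\cdot\|g-g_0\|_{C^r}\cdot\|\phi-\Phi(P(\phi))\|_{L^2(S)}.
$$
Since $\phi\in\B$ we have $\|\phi\|_{L^\infty}\le R$, and by Lemma \ref{l:P-basic} the point $P(\phi)$ lies in $B_o(R_1)$, so $\|\Phi(P(\phi))\|_{L^\infty}$ is bounded by a constant depending only on $n$ and $R$. Because the measure on $S$ is a probability measure, this gives a uniform bound $\|\phi-\Phi(P(\phi))\|_{L^2(S)}\le C'(n,R)$. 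Thus by choosing $g$ sufficiently close to $g_0$ in $C^r$ we can make $\|A_\phi-I\|\le \tfrac12$ uniformly for $\phi\in\B$. A standard Neumann series argument then yields $\|A_\phi^{-1}\|\le (1-\tfrac12)^{-1}=2$.

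Combining the two estimates, for any $\delta\in\LL$ with $G_\phi(\delta,\delta)=1$ we get
$$
|d_\phi P(\delta)|_{T_{P(\phi)}M}=|A_\phi^{-1}(E_\phi(\delta))|\le \|A_\phi^{-1}\|\cdot|E_\phi(\delta)|\le 2\cdot n=2n,
$$
which is the desired Lipschitz bound. There is no real obstacle here; the substance is already contained in the preceding section and Propositions \ref{p:JE<1} and \ref{p:A-estimate}. The only care required is to verify that the $L^2$-diameter of $\B\cup\Phi(M)\cap\B$, as seen by the formula for $\|A_\phi-I\|$, is uniformly bounded, which is immediate from the $L^\infty$-ball definition of $\B$ and the control on $P(\phi)$ provided by Lemma \ref{l:P-basic}.
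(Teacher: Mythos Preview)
Your proof is correct and follows essentially the same approach as the paper: factor $d_\phi P=A_\phi^{-1}\circ E_\phi$ via Lemma~\ref{l:dP}, use Proposition~\ref{p:JE<1}(2) for the $n$-Lipschitz bound on $E_\phi$, and use the first assertion of Proposition~\ref{p:A-estimate} to get $\|A_\phi^{-1}\|\le 2$ for $g$ close to $g_0$. The paper states the last step more tersely, while you spell out the uniform $L^2$-bound on $\phi-\Phi(P(\phi))$ and the Neumann series; these details are exactly what the paper is implicitly invoking.
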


\begin{proof}
Recall that $d_\phi P=A_\phi^{-1}\circ E_\phi$, cf.\ Lemma \ref{l:dP}.
The first assertion of Proposition \ref{p:A-estimate}
implies that $\|A_\phi^{-1}\|\le 2$ provided that $\|g-g_0\|_{C^r}$
is sufficiently small.
Since $E_\phi$ is $n$-Lipschitz
by Proposition \ref{p:JE<1},
it follows that $d_\phi P$ is $2n$-Lipschitz.
\end{proof}

\section{A compression trick}
\label{sec-compression}

Define a function $h:\B\to\R_+$ by 
$$
h(\phi)=\|\phi -\Phi(P(\phi))\|_{L^2}
$$
and let $\ep(g)=\|g-g_0\|_{C^r}$ where $r$ is
from Proposition \ref{p:A-estimate}.
Assuming that $\ep(g)$ is sufficiently small,
we rewrite the assertions of Proposition \ref{p:A-estimate}
as follows:
$$
 \|A_\phi^{-1}-I\| \le C(n,R)\ep(g) h(\phi)
$$
and
$$
 |\det A_\phi^{-1}| \le 1+C(n,R) \ep(g) h^2(\phi) .
$$
Since $d_\phi P=A_\phi^{-1}\circ E_\phi$ (Lemma \ref{l:dP})
and $J_{G_\phi}(E_\phi)\le 1$ (Proposition \ref{p:JE<1}),
it follows that
$$
 J_\phi P \le 1+C(n,R) \ep(g) h^2(\phi) \le 2
$$
provided that $\ep(g)$ is sufficiently small.

Observe that the function $h^2$ is smooth on $\B$
and its derivative is given by
$$
 d_\phi h^2(\delta) = 2\<\phi -\Phi(P(\phi)), \delta-d\Phi(dP(\delta))\>_{L^2}
$$
for all $\phi\in\B$, $\delta\in T_\phi\LL=\LL$. Hence
\be\label{e:dh2-estimate}
 |d_\phi h^2(\delta)|\le 2 h(\phi) \|\delta-d\Phi(dP(\delta))\|_{L^2} .
\ee

For a constant $c>0$ define $F_c:L^\infty(S)\to M\times\R_+$
by $F_c(\phi)=(P(\phi),ch(\phi))$.
Note that $F_c$ is smooth on $\B\setminus\Phi(M)$.

\begin{lemma}
\label{product}
For every $R>0$, there exists a $c_1=c_1(n,R)>0$ such that
for every positive $c<c_1$ and every $\phi\in\B\setminus\Phi(M)$,
the $n$-dimensional Jacobian of $F_c$ at $\phi$
with respect to $G$ is bounded above by
$$
 1 + C(n,R) \ep(g) h^2(\phi) .
$$
\end{lemma}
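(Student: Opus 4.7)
The plan is to compute the Jacobian by writing the pullback Gram matrix as a rank-one perturbation and then apply the matrix determinant lemma. Fix $\phi\in\B\setminus\Phi(M)$, set $p=P(\phi)$, and pick a $G_\phi$-orthonormal basis $\delta_1,\dots,\delta_n$ of an $n$-dimensional subspace $Y\subset\LL$. Since $d_\phi F_c(\delta)=(d_\phi P(\delta),c\,d_\phi h(\delta))$, the Gram matrix of $dF_c|_Y$ in the natural product metric equals $G^P+c^2vv^\top$, where $G^P_{ij}=\langle d_\phi P(\delta_i),d_\phi P(\delta_j)\rangle_M$ and $v_i=d_\phi h(\delta_i)$. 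The rank-one determinant identity gives
\[
 (J_{G,Y}F_c)^2 \;=\; \det(G^P)+c^2\,v^\top\!\operatorname{adj}(G^P)\,v \;=\; (J_{G,Y}P)^2 + c^2\,v^\top\!\operatorname{adj}(G^P)\,v ,
\]
reducing the problem to bounding these two summands.

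For the first summand, I will combine Propositions \ref{p:JE<1}, \ref{p:jacp}, and \ref{p:A-estimate}. Let $\delta_*\in Y$ be the $G_\phi$-unit vector maximizing $\beta:=\|\delta_*-d_p\Phi(E_\phi(\delta_*))\|_{L^2}$. Proposition \ref{p:jacp} yields $J_{G,Y}(E_\phi)\le 1-c_0\beta^2$, while Proposition \ref{p:A-estimate} gives $|\det A_\phi|^{-1}\le 1+C(n,R)\,\ep(g)\,h^2(\phi)$. Multiplying these via \eqref{e:Jmain}, squaring, and absorbing the cross terms using $h\le R$ and $\beta\le C(n,R)$ produces
\[
 (J_{G,Y}P)^2 \;\le\; 1 + C_1(n,R)\,\ep(g)\,h^2(\phi) - c_0\,\beta^2 .
\]

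For the second summand, \eqref{e:dh2-estimate} rewrites as $|d_\phi h(\delta)|\le\|\delta-d_p\Phi(d_\phi P(\delta))\|_{L^2}$. Using $d_\phi P=A_\phi^{-1}E_\phi$ together with the bound $\|A_\phi-I\|\le C\ep h$ from Proposition \ref{p:A-estimate} and the $2n$-Lipschitz bound of Corollary \ref{cor:dP-lip}, the quantity $\|\delta-d\Phi(d_\phi P(\delta))\|_{L^2}$ differs from $\|\delta-d\Phi(E_\phi(\delta))\|_{L^2}$ by $O(\ep h)$ uniformly on the $G_\phi$-unit sphere. Summing squares over the frame $(\delta_i)$ and using the maximality of $\beta$ gives $|v|^2\le C_2(n,R)\bigl(\beta^2+\ep^2 h^2\bigr)$. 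Corollary \ref{cor:dP-lip} also bounds the eigenvalues of $G^P$, so $\|\!\operatorname{adj}(G^P)\|\le C_3(n)$, and the second summand is at most $c^2 C_2 C_3(\beta^2+\ep^2 h^2)$.

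Assembling the two estimates,
\[
 (J_{G,Y}F_c)^2 \;\le\; 1 + C_1\,\ep\,h^2 - c_0\,\beta^2 + c^2 C_2 C_3\bigl(\beta^2+\ep^2 h^2\bigr) .
\]
The crux of the argument, and the whole reason for the compression, is that the two $\beta^2$ contributions carry opposite signs. Choosing $c_1=c_1(n,R)$ so small that $c_1^2 C_2 C_3\le c_0$, any $c<c_1$ makes the net $\beta^2$ contribution non-positive, leaving $(J_{G,Y}F_c)^2\le 1+C(n,R)\,\ep(g)\,h^2$; taking square roots and the supremum over $Y$ gives the claimed bound on $J_GF_c$. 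The main obstacle is precisely this balancing: the negative $\beta^2$ term from Proposition \ref{p:jacp} (which measures how far $Y$ is from tangent to $\Phi(M)$) must dominate the positive $\beta^2$ contribution coming from the new height coordinate. The rank-deficient case $\det G^P=0$ poses no additional difficulty because the rank-one determinant identity is valid verbatim and the estimates on $|v|$ and $\operatorname{adj}(G^P)$ do not use invertibility.
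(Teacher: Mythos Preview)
Your proof is correct and reaches the same conclusion, but the execution differs from the paper's. The paper first chooses a $G_\phi$-orthonormal basis $(\delta_1,\dots,\delta_n)$ of $Y$ with $d_\phi h(\delta_i)=0$ for $i\ge 2$, so that the height contribution is concentrated in a single scalar $t=c\,d_\phi h(\delta_1)$; after picking $(e_i)$ to make $(a_{ij})$ upper triangular it gets the explicit formula $J=\sqrt{a_{11}^2+t^2}\,\prod_{i\ge 2}a_{ii}$ and then splits into two cases according to whether $a_{11}$ is smaller or larger than $(2n)^{-n}$. In the large-$a_{11}$ case it applies Proposition~\ref{p:jacp} with the particular vector $\delta_1$ (not a maximizer) to produce the negative term, and the small-$a_{11}$ case is disposed of by a crude product bound. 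Your route replaces this with the rank-one determinant identity $\det(G^P+c^2vv^\top)=\det G^P+c^2 v^\top\operatorname{adj}(G^P)v$, uses Proposition~\ref{p:jacp} with the maximizing unit vector $\delta_*$, and bounds each $|v_i|$ by that same maximum $\beta$ plus an $O(\ep h)$ error. This buys you a basis-free argument with no case analysis; the paper's approach, on the other hand, never needs to invoke the adjugate or sum over the whole frame, and its choice of $\delta_1$ makes the positive and negative contributions come from \emph{exactly} the same vector rather than being compared through a supremum.
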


\begin{proof}
Let $Y$ be an $n$-dimensional subspace of $T_\phi\LL$
equipped with our scalar product $G_\phi$.
Denote the $n$-dimensional Jacobian of $dF_c|_Y$ by $J$.
Choose an orthonormal basis $(\delta_1,\dots,\delta_n)$
in $Y$ such that $d_\phi h(\delta_i)=0$ for $i\ge 2$.
Then choose an orthonormal basis $e_1,\dots,e_n$
in $T_{P(\phi)}M$
as in the proof of Proposition \ref{p:jacp},
namely so that the matrix $(a_{ij})$ of
$d_\phi P|_Y$ w.r.t. these bases is upper triangular
and its diagonal elements $a_{ii}$ are nonnegative.
Then
$$
 J = \sqrt{a_{11}^2+t^2}\cdot \prod_{i=2}^n a_{ii}
$$
where 
$$
t = d_\phi(ch)(\delta_1) = \frac c{2h(\phi)} d_\phi h^2(\delta_1) .
$$
By \ref{e:dh2-estimate} we have
\begin{equation}
\label{t1}
 |t| \le c\cdot \|\delta_1 - d\Phi\circ d_\phi P(\delta_1)\|_{L^2}
\end{equation}
By Corollary \ref{cor:dP-lip} we have
\be\label{e:aii<2n}
 a_{ii} \le |d_\phi P(\delta_i)| \le 2n
\ee
for all $i$, hence 
$$
\|\delta_1 - d\Phi\circ d_\phi P(\delta_1)\|_{L^2}
= \|\delta_1 - a_{11}d\Phi(e_1)\|_{L^2}\le C(n,R) .
$$
Therefore we may assume that $c_1$ is so small that \eqref{t1} implies that
$|t|<(2n)^{-n}$.
Consider two cases.

{\it Case 1: $a_{11}<(2n)^{-n}$}.
Then $\sqrt{a_{11}^2+t^2} \le \sqrt2 (2n)^{-n}$, hence
$$
 J= \sqrt{a_{11}^2+t^2}\cdot \prod_{i=2}^n a_{ii} \le  \sqrt2(2n)^{-n} \prod_{i=2}^n a_{ii}
 \le \sqrt2(2n)^{-1} < 1.
$$
Here the second inequality follows from \eqref{e:aii<2n}.

{\it Case 2: $a_{11}\ge (2n)^{-n}$}. Then
$$
J = \sqrt{a_{11}^2+t^2}\cdot \prod_{i=2}^n a_{ii}
\le \left( a_{11}+\frac{t^2}{2a_{11}}\right)   \prod_{i=2}^n a_{ii}
= J_Y P\left(1 + \frac{t^2}{2a_{11}^2}\right) \le J_Y P + (2n)^{2n} t^2 .
$$
Here we used that $J_Y P=\prod_i a_{ii}$ and $J_YP\le 2$.
Since $d_\phi P=A_\phi^{-1}\circ E_\phi$ (Lemma \ref{l:dP}),
we have
$$
J_Y P = \det A_\phi^{-1} J_Y(E_\phi) \le J_Y(E_\phi) + C(n,R) \ep(g) h^2(\phi) .
$$
For the first term we use the estimate
$$
J_Y(E_\phi) \le 1- c_0\|\delta_1 - d\Phi\circ E_\phi(\delta_1)\|_{L^2}^2
$$
from Proposition \ref{p:jacp}, thus
\be\label{t3}
 J \le 1- c_0\|\delta_1 - d\Phi\circ E_\phi(\delta_1)\|_{L^2}^2 + (2n)^{2n} t^2 + C(n,R) \ep(g) h^2(\phi)
\ee
By \eqref{t1} and the triangle inequality in $L^2$,
$$
|t|
 \le c ( \|\delta_1 - d\Phi \circ E_\phi(\delta_1) \|_{L^2} 
  + \|d\Phi\circ(d_\phi P-E_\phi)(\delta_1)  \|_{L^2} )
$$
We estimate the second term using Proposition \ref{p:A-estimate}:
$$
 |(d_\phi P-E_\phi)(\delta_1)| =| (A_\phi^{-1}-I)\circ E_\phi(\delta_1) | \le C(n,R)\ep(g) h(\phi)
$$
since $|E_\phi(\delta_1)|\le n$, cf.\ Proposition \ref{p:JE<1}.
Therefore
$$
 |t| \le c\|\delta_1 - d\Phi \circ E_\phi(\delta_1) \|_{L^2} + C(n,R)\ep(g) h(\phi),
$$
hence
$$
 t^2 \le 2c^2 \|\delta_1 - d\Phi \circ E_\phi(\delta_1) \|_{L^2}^2
 +2(C(n,R)\ep(g) h(\phi))^2
$$
Substituting this into \eqref{t3} yields
$$
J \le 1- (c_0-2(2n)^{2n}c^2)\|\delta_1 - d\Phi\circ E_\phi(\delta_1)\|_{L^2}^2
+ C(n,R) \ep(g) h^2(\phi)
$$
We may assume that $c_1$ is chosen so small that $c_0-2(2n)^{2n}c_1^2\ge 0$,
then 
$$
 J \le 1 + C(n,R) \ep(g) h^2(\phi)
$$
and the lemma follows.
\end{proof}

For $t\in[0,1]$ define a ``homothety'' $A_t:M\to M$ by
$$
A_t(x)=\exp_o (t\cdot \exp_o^{-1}(x)) .
$$
Clearly $A_t$ is a smooth map and
it is $t$-Lipschitz due to nonpositive curvature of~$M$.
For a small  $\sigma>0$, define a map
$Q_\sigma:M\times\R_+$ by
$$
 Q_\sigma(x,h) = A_{(1+\sigma h^2)^{-1}} (x) .
$$

\begin{lemma}
\label{shrink}
If $x\in M$ is such that $\dist_M(o,x)<(4\sigma)^{-1/2}$,
then the $n$-dimensional Jacobian of $Q_\sigma$ at $(x,h)$ is not greater
than $(1+\sigma h^2)^{-1}$.
\end{lemma}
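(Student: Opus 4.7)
The plan is to compute the $n$-dimensional Jacobian of $Q_\sigma$ at $(x,h)$ by splitting the derivative into its $M$- and $\R_+$-components and applying the matrix determinant lemma. Let $s=(1+\sigma h^2)^{-1}$, so that $Q_\sigma(x,h)=A_s(x)$, and write the derivative
\[
  L:=d_{(x,h)}Q_\sigma:T_xM\oplus\R\to T_{A_s(x)}M, \qquad L(\xi,\eta)=dA_s(\xi)+\eta\,v_h,
\]
with $v_h:=\partial_hQ_\sigma(x,h)$. Then the $n$-Jacobian equals $\sqrt{\det(LL^{\top})}=\sqrt{\det(B+v_hv_h^{\top})}$, where $B=dA_s\,dA_s^{\top}$.

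The first main step is an explicit description of $v_h$ and of $dA_s$ along the radial direction from $o$. Let $\gamma$ be the unit-speed geodesic from $o$ through $x$, set $r=\dist_M(o,x)$, so $\gamma(r)=x$ and $Q_\sigma(x,h)=\gamma(sr)$. Differentiating in $h$,
\[
  v_h = -\tfrac{2\sigma h\,r}{(1+\sigma h^2)^2}\,\dot\gamma(sr) = -2\sigma h\,r\,s^2\,\dot\gamma(sr),
\]
so $v_h$ is radial at $A_s(x)$ with $|v_h|=2\sigma h r s^2$. Moreover, since $A_s\circ\gamma(t)=\gamma(st)$, one gets the purely metric-independent identity $dA_s(\dot\gamma(r))=s\,\dot\gamma(sr)$. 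Hence the radial direction at $A_s(x)$ is an eigenvector of $B$ with eigenvalue exactly $s^2$, and
\[
  v_h^{\top}B^{-1}v_h = |v_h|^2/s^2 = 4\sigma^2 h^2 r^2 s^2.
\]

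The second main step uses that $g$ close to $g_0$ has nonpositive sectional curvature, so $M$ is a Cartan--Hadamard (CAT(0)) manifold. The standard convexity of the distance between two geodesics emanating from $o$ then gives that $A_t$ is $t$-Lipschitz, hence $\det dA_s\le s^n$ and $\det B\le s^{2n}$. Applying the matrix determinant lemma,
\[
  \det(B+v_hv_h^{\top})=\det B\,\bigl(1+v_h^{\top}B^{-1}v_h\bigr)\le s^{2n}\bigl(1+4\sigma^2h^2r^2 s^2\bigr).
\]

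Finally I would invoke the hypothesis $r<(4\sigma)^{-1/2}$, giving $4\sigma r^2<1$, hence
\[
  1+4\sigma^2h^2r^2 s^2 \le 1+\sigma h^2 s^2 \le 1+\sigma h^2 = 1/s.
\]
The $n$-Jacobian is therefore at most $s^n\sqrt{1/s}=s^{\,n-1/2}\le s = (1+\sigma h^2)^{-1}$, the last step using $s\le 1$ and $n\ge 2$. The main technical point to verify is the global $t$-Lipschitz property of $A_t$ for the perturbed metric—one must confirm that $g$ close enough to $g_0$ still yields nonpositive curvature, so that CAT(0) convexity applies. Once this is in place, the rest rides on the metric-independent action of $A_s$ on radial geodesics from $o$, which keeps the eigenvalue structure of $B$ compatible with the direction of $v_h$ regardless of the perturbation.
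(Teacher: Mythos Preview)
Your argument is correct and reaches the same numerical bound as the paper, but the route is different. The paper invokes the Rauch Comparison Theorem to say that the $n$-Jacobian of $Q_\sigma$ in $M$ is dominated by that of the corresponding map $(x,h)\mapsto (1+\sigma h^2)^{-1}x$ in $\R^n$, and then computes the Euclidean Jacobian explicitly as $(1+\sigma h^2)^{-(n+1)}\sqrt{(1+\sigma h^2)^2+(2\sigma h|x|)^2}$. You instead work intrinsically in $M$: you use only the $s$-Lipschitz property of $A_s$ (a CAT(0) consequence, which the paper also records) together with the exact radial singular value $s$, and assemble the Jacobian via the matrix determinant lemma. The two computations literally coincide once one notices that your bound $\det(LL^\top)\le s^{2n}(1+4\sigma^2h^2r^2s^2)$ is exactly the square of the paper's Euclidean expression with $|x|=r$. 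Your approach avoids Rauch and is a bit more transparent about where each factor comes from; the paper's approach is shorter to state but hides the structure inside the comparison theorem.

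One point deserves a line of justification. From $dA_s(\dot\gamma(r))=s\,\dot\gamma(sr)$ alone it does \emph{not} follow that $\dot\gamma(sr)$ is an eigenvector of $B=dA_s\,dA_s^{\top}$; you also need that $dA_s$ sends $\dot\gamma(r)^{\perp}$ into $\dot\gamma(sr)^{\perp}$, so that $\dot\gamma(r)$ is a genuine singular vector. This is exactly the Gauss lemma applied twice (once for $\exp_o^{-1}$ at $x$ and once for $\exp_o$ at $A_s(x)$), since $A_s=\exp_o\circ(s\cdot\,)\circ\exp_o^{-1}$ and each factor preserves the radial/transverse splitting. With that said, your formula $v_h^{\top}B^{-1}v_h=|v_h|^2/s^2$ is justified, and the rest of the estimate goes through as you wrote (the final step $s^{\,n-1/2}\le s$ uses $n\ge 2$, which is also implicit in the paper's ``one easily sees'' step).
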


\begin{proof}
Due to the Rauch Comparison Theorem, the $n$-dimensional
Jacobian of $Q_\sigma$ does not exceed
that of the similar map for $\R^n$, namely
$$
 (x,h) \mapsto (1+\sigma h^2)^{-1} x, \qquad x\in\R^n, h\in\R_+ .
$$
The latter equals
$$
 (1+\sigma h^2)^{-(n+1)} \sqrt{(1+\sigma h^2)^2 + (2\sigma h|x|)^2} .
$$
If  $|x|$ does not exceed $(4\sigma)^{-1/2}$,
one easily sees that the expression under the square root is not greater
than $(1+\sigma h^2)^3$, hence the result.
\end{proof}

Now define $P_\sigma:\B\to M$ by
$$
 P_\sigma (\phi) = Q_\sigma (P(\phi), \sigma h(\phi))
 = A_{(1+\sigma^3 h^2(\phi))^{-1}} (P(\phi))
$$
where $P$ and $h$ are the same as above.
Note that the second formula implies that $P_\sigma$ is smooth.

\begin{proposition}\label{p-final-jacobian}
For every $R>0$ there exist $\sigma>0$, $c>0$ and $\ep>0$ such that
the $n$-dimensional Jacobian $J(\phi):=J_GP_\sigma(\phi)$ of $P_\sigma$
with respect to $G$ at any point $\phi\in\B$ satisfies
$$
 J(\phi) \le 1 - c\cdot h^2(\phi) .
$$
provided that $\ep(g)=\|g-g_0\|_{C^r}<\ep$.
\end{proposition}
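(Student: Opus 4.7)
The strategy is to write $P_\sigma$ as a composition $P_\sigma = Q_\sigma \circ F_\sigma$ where $F_\sigma(\phi) = (P(\phi), \sigma h(\phi))$, apply the chain rule to its Jacobian, and then combine the two preceding Jacobian bounds so that the contraction coming from the hyperbolic-like ``shrinking'' $Q_\sigma$ strictly dominates the mild expansion of $F_\sigma$. Note that $h$ is bounded on $\B$, say $h(\phi)^2\le H_0=H_0(n,R)$, since both $\phi$ and $\Phi(P(\phi))$ live in balls of radius $\le C(n,R)$ in $L^\infty$.

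First, I choose $\sigma>0$ small enough that the two preceding lemmas both apply. For Lemma \ref{product} to give
$$
J_G F_\sigma(\phi) \le 1 + C(n,R)\,\ep(g)\, h^2(\phi),
$$
it suffices to require $\sigma<c_1(n,R)$. For Lemma \ref{shrink} to apply to $Q_\sigma$ at the point $(P(\phi), \sigma h(\phi))$, I need $\dist_M(o, P(\phi))<(4\sigma)^{-1/2}$; by Lemma \ref{l:P-basic} this holds whenever $\sigma<1/(4R_1^2)$, where $R_1=R_1(n,R)$. Under these conditions Lemma \ref{shrink} gives
$$
J(Q_\sigma)\bigl(P(\phi),\sigma h(\phi)\bigr)\le \bigl(1+\sigma\cdot(\sigma h(\phi))^2\bigr)^{-1}=\bigl(1+\sigma^3 h^2(\phi)\bigr)^{-1}.
$$

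Next, by the chain rule for Jacobians,
$$
J(\phi)=J_G P_\sigma(\phi)\le J(Q_\sigma)\bigl(F_\sigma(\phi)\bigr)\cdot J_G F_\sigma(\phi)\le \frac{1+C(n,R)\,\ep(g)\,h^2(\phi)}{1+\sigma^3 h^2(\phi)}.
$$
Writing the right-hand side as $1-\bigl(\sigma^3-C(n,R)\ep(g)\bigr)h^2(\phi)\big/\bigl(1+\sigma^3 h^2(\phi)\bigr)$ and using $h^2(\phi)\le H_0$, I fix $\sigma$ as above and then choose $\ep$ so small that $C(n,R)\ep<\sigma^3/2$ whenever $\ep(g)<\ep$. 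This yields
$$
J(\phi)\le 1-\frac{\sigma^3/2}{1+\sigma^3 H_0}\,h^2(\phi)=1-c\cdot h^2(\phi)
$$
with $c=\sigma^3/\bigl(2(1+\sigma^3 H_0)\bigr)>0$, which is the claim.

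The only nontrivial point is the calibration of scales: the factor $\sigma$ inserted in front of $h(\phi)$ inside $F_\sigma$ is what reconciles the two lemmas. Lemma \ref{product} only controls the expansion of $F_c$ by $\ep(g)h^2$, which we have no direct way of making $o(h^2)$; meanwhile Lemma \ref{shrink} delivers a genuine contraction proportional to $\sigma\cdot(\text{second coordinate})^2$. Choosing the second coordinate to be $\sigma h(\phi)$ (rather than $h(\phi)$) converts the contraction into $\sigma^3 h^2$, which is a fixed quantity independent of $\ep(g)$, so by taking $\ep$ much smaller than $\sigma^3$ the contraction wins. Conceptually this is the only obstacle — with $\sigma$ fixed first and $\ep$ chosen afterwards, everything else reduces to inspection of the chain rule and the bound $h^2\le H_0$.
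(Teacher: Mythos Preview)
Your proof is correct and follows essentially the same route as the paper: decompose $P_\sigma = Q_\sigma \circ F_\sigma$, bound the two Jacobians via Lemmas~\ref{product} and~\ref{shrink}, multiply them by the chain rule, and then choose $\ep$ small relative to the fixed $\sigma$ so that the $\sigma^3 h^2$ contraction beats the $C\ep(g)h^2$ expansion. The only small omission is that Lemma~\ref{product} is stated only for $\phi\in\B\setminus\Phi(M)$ (because $h$, being an $L^2$-norm, is not differentiable where it vanishes), so the case $\phi\in\Phi(M)$ needs a separate sentence --- but there $h(\phi)=0$, $d_\phi h^2=0$, hence $d_\phi P_\sigma=d_\phi P$ and $J(\phi)=1$, so the desired inequality holds with equality.
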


\begin{proof}
Choose $\sigma$ so that $\sigma<c_1(n,R)$ from Lemma \ref{product}
and $(4\sigma)^{-1/2}>\operatorname{diam}(P(\B))$.
 For a point $\phi\in\Phi(M)$, we have
$d_\phi Q_\sigma=d_\phi P$ since $d_\phi h^2=0$,
and therefore $J(\phi)=J_GP(\phi)=1$.

Now consider a point $\phi\notin\Phi(M)$.
The map $P_\sigma$ is a composition of the map $F_\sigma:\B\to M\times\R_+$
whose Jacobian is estimated in Lemma \ref{product}
and the map $Q_\sigma$ whose Jacobian is estimated in Lemma \ref{shrink}.
These estimates yield
$$
 J \le \frac{1 + C(n,R) \ep(g) h^2(\phi)}{1+\sigma (\sigma h(\phi))^2}
 \le 1 - 2c \cdot h^2(\phi)+ C(n,R) \ep(g) h^2(\phi)
$$
for a suitable $c=c(\sigma,n,R)$.
Choosing $\ep<\frac c{C(n,R)}$ yields the desired inequality.
\end{proof}

Now we are in position to complete the proof of Proposition \ref{main prop}.
Let $\ep$ and $\sigma$ be as in Proposition \ref{p-final-jacobian}.
Assuming that $\|g-g_0\|_{C^r}<\ep$, consider the map
$P_\sigma:\B\to M$ constructed above.
By Lemma \ref{l-area-jacobian-inequality},
for any Riemannian $n$-manifold $N$ and any
1-Lipschitz map $f:N\to\B$ we have
$$
 \vol (P_\sigma\circ f) \le \int_N J_GP_\sigma(f(x))\,d\vol_N(x).
$$
Then the inequality for $J$ from  Proposition \ref{p-final-jacobian}
implies that $\vol (P_\sigma\circ f) \le \vol(N)$.
Moreover in the case of equality we have
$h(\phi)=0$ for all $\phi\in f(N)$, hence $f(N)\subset\Phi(M)$.
Thus the map $P_\sigma$ possesses the properties claimed in
Proposition \ref{main prop}.

\bibliographystyle{plain}

\end{document}